\documentclass[mathpazo]{csam}

\RequirePackage{fix-cm}
% Packages and macros go here
\usepackage{mathrsfs,amsmath,amssymb,bm}
\usepackage{lipsum}
\usepackage{amsfonts}
\usepackage{epstopdf}
\usepackage{makecell, rotating}
\usepackage{multirow}
\usepackage{algorithm} 
\usepackage{algorithmic}
\usepackage{multirow}
\usepackage{subfigure}
\usepackage{color}

\newcommand{\bbR}{\mathbb{R}}
\newcommand{\bbC}{\mathbb{C}}
\newcommand{\bbZ}{\mathbb{Z}}

\newcommand{\by}{\bm{y}}
\newcommand{\bz}{\bm{z}}
\newcommand{\bx}{\bm{x}}
\newcommand{\tbx}{\tilde{\bm{x}}}

\newcommand{\bb}{\bm{b}}
\newcommand{\bn}{\bm{n}}
\newcommand{\br}{\bm{r}}
\newcommand{\bh}{\bm{h}}

\newcommand{\bN}{\bm{N}}

\newcommand{\bhphi}{\bm{\hat{\phi}}}
\newcommand{\bhpsi}{\bm{\hat{\psi}}}

\newcommand{\bu}{\bm{u}}

\newcommand{\bB}{\bm{B}}
\newcommand{\bA}{\bm{A}}
\newcommand{\bI}{\bm{I}}
\newcommand{\calS}{\mathcal{S}}
\newcommand{\calB}{\mathcal{B}}
\newcommand{\calP}{\mathcal{P}}

\newcommand{\calI}{\mathcal{I}}
\newcommand{\calD}{\mathcal{D}}

\newcommand{\hphi}{\hat{\phi}}
\newcommand{\hPhi}{\hat{\Phi}}
\newcommand{\hpsi}{\hat{\psi}}

\newcommand{\vzero}{\bm{0}}

\newcommand{\dom}{\mathrm{dom}}

\DeclareMathOperator{\intdom}{\mathrm{int}\dom}
\DeclareMathOperator{\ridom}{\mathrm{ri}\dom}
\DeclareMathOperator{\affdom}{\mathrm{aff}\dom}

\newcommand\tbbint{{-\mkern -16mu\int}}

\newcommand\dbbint{{-\mkern -19mu\int}}

\newcommand\bbint{
	{\mathchoice{\dbbint}{\tbbint}{\tbbint}{\tbbint}}
}

\allowdisplaybreaks[4]
\DeclareMathOperator*{\argmin}{\mathrm{argmin}}
\DeclareMathOperator*{\argmax}{\mathrm{argmax}}

\newcommand{\dist}{\mathrm{dist}}
\newtheorem{thm}{Theorem}
\newtheorem{assumption}[thm]{Assumption}

\theoremstyle{remark}
% \newtheorem{remark}[theorem]{Remark}

 % add by BAO for notice.
 % add by BAO for notice.

%%%%% author macros %%%%%%%%%
% place your own macros HERE
%%%%% end %%%%%%%%%

\begin{document}
	%%%%% title : short title may not be used but TITLE is required.
	% \title{TITLE}
	% \title[short title]{TITLE}
	\title{An adaptive block Bregman proximal gradient method for computing stationary states of multicomponent phase-field crystal model}
	
	%%%%% author(s) :
	
	% multiple authors:
	% Note the use of \affil and \affilnum to link names and addresses.
	% The author for correspondence is marked by \corrauth.
	% use \emails to provide email addresses of authors
	% e.g. below example has 3 authors, first author is also the corresponding
	%      author, author 1 and 3 having the same address.
	\author[C.L Bao et~al.]{Chenglong Bao\affil{1},
		Chang Chen\affil{1}~and Kai Jiang\affil{2}\corrauth}
	\address{\affilnum{1}\ Yau Mathematical Sciences Center, Tsinghua University, Beijing, 100084, China, \\
		\affilnum{2}\ School of Mathematics and Computational Science, 
		Hunan Key Laboratory for Computation and Simulation in Science and Engineering,
		Xiangtan University, Xiangtan, Hunan, 411105, China.}
	\emails{{\tt kaijiang@xtu.edu.cn} 
		}
	% \footnote and \thanks are not used in the heading section.
	% Another acknowlegments/support of grants, state in Acknowledgments section
	% \section*{Acknowledgments}
	
	%%%%% Begin Abstract %%%%%%%%%%%
	\begin{abstract}
In this paper, we compute the stationary states of the multicomponent phase-field crystal model by formulating it as a block constrained minimization problem. The original infinite-dimensional non-convex minimization problem is approximated by a finite-dimensional constrained non-convex minimization problem after an appropriate spatial discretization. To efficiently solve the above optimization problem, we propose a so-called adaptive block Bregman proximal gradient (AB-BPG) algorithm that fully exploits the problem's block structure. The proposed method updates each order parameter alternatively, and the update order of blocks can be chosen in a deterministic or random manner. Besides, we choose the step size by developing a practical linear search approach such that the generated sequence either keeps energy dissipation or has a controllable subsequence with energy dissipation. The convergence property of the proposed method is established without the requirement of global Lipschitz continuity of the derivative of the bulk energy part by using the Bregman divergence. The numerical results on computing stationary ordered structures in binary, ternary, and quinary component coupled-mode Swift-Hohenberg models have shown a significant acceleration over many existing methods.
	\end{abstract}
	%%%%% end %%%%%%%%%%%
	
	%%%%% AMS/PACs/Keywords %%%%%%%%%%%
	%\pac{}
	\ams{54C40, 14E20.%The information of the AMS subject classification can be found in http://mathscinet.ams.org/msc/msc2010.html
	}
	\keywords{Multicomponent coupled-mode Swift-Hohenberg model, Stationary states, Adaptive block Bregman proximal gradient algorithm, Convergence analysis, Adaptive step size.}
	
	%%%% maketitle %%%%%
	\maketitle

	\section{Introduction}

Multicomponent systems, such as alloys, soft matters, are an important class of materials, particularly for technical applications and processes. The microstructures of materials play a central role for a broad range of	industrial application, such as the mechanical property of the quality and the durability, optical device, high-capacity data storage devices\,\cite{garcke1999multiphase,nestler2005multicomponent,nestler2011phase,ofori2013multicomponent,taha2019phase}.
Advances in modeling and computation have significantly improved the understanding of the fundamental nature of microstructure and phase selection processes. Notable
contributions have been made through using the phase-field methodology\,\cite{elder2002modeling,elder2004modeling}, which has been
successful at examining mesoscale microstructure evolution over diffusive time scales. 
Recently, phase field crystal (PFC) models have been proposed to efficiently simulate
eutectic solidification, elastic anisotropy, solute drag, quasicrystal formation,
solute clustering and precipitation mechanisms\,\cite{chen2002phase,nestler2005multicomponent,steinbach2013phase}. Besides, binary and ternary component phase field models have attracted many research interests from the computation perspective\,\cite{badalassi2003computation,boyer2011numerical,yang2017numerical,alster2017simulating,alster2017phase,shen2018scalar,chen2020energy}.

The PFC model for a general class of multicomponent systems is formulated consisting
$s$ components in $d$ dimensional space.  
The concentrations of the components are described by $ s $ vector-valued functions
$\{\phi_i(\br)\}_{i=1}^{s}=(\phi_1(\br),\cdots,\phi_s(\br))$. The variable
$\phi_\alpha(\br)$, so-called order parameter, denotes the local fraction of phase $\alpha$.
The free energy functional of PFC model of a $s$-component system can be described by two
contributions, a bulk free energy $F[\{\phi_i(\br)\}_{i=1}^{s}]$ and an interaction potential 
$G[\{\phi_i(\br)\}_{i=1}^{s}]$, which drive the
density fields to become ordered by creating minimal in the free energy for these
states. Formally, we can write the free energy functional of the multicomponent system as 
\begin{equation}
E[\{\phi_i(\br)\}_{i=1}^s;\Theta]  = G[\{\phi_i(\br)\}_{i=1}^s;\Theta] +
F[\{\phi_i(\br)\}_{i=1}^s;\Theta],
\label{eq:model}
\end{equation}
where $ \Theta $ are relevant physical parameters. 
$F$ has polynomial or logarithmic formulation\,\cite{brazovskii1975phase,swift1977hydrodynamic,cross1993pattern} and $G$ is the 
interaction potential, such as high-order differential terms or convolution
terms\,\cite{brazovskii1975phase,savitz2018multiple}. 
Usually, some constraints are imposed on the PFC model, such as 
the mass conservation or incompressibility which means 
the order parameter $\{\phi_i(\br)\}_{i=1}^{s}$ belong to a feasible space.

To understand the fundamental nature of multicomponent systems, 
it often involves finding stationary states corresponding to ordered structures.
Denote $ V_i $ $(i = 1,2,\cdots,s) $ to be a feasible space of the $i$-th order parameter, 
the above problem is transformed into solving the minimization problem
\begin{equation}\label{eq:infitemin}
\min\quad E[\{\phi_i(\br)\}_{i=1}^s;\Theta], \text{~ s.t.~ } \phi_i(\br) \in V_i~(i = 1,2,\cdots, s),
\end{equation}
with different physical parameters $ \Theta $, which brings a tremendous
computational burden.

Different methods have been proposed for computing the stationary states
of multicomponent models and can be classified into two categories through different formulations and numerical techniques. 
One is to solve the steady nonlinear Euler-Lagrangian system of
\eqref{eq:infitemin} through different spatial discretization approaches.
The other class of approaches has been constructed via the formulation of the minimization problem \eqref{eq:infitemin}.
Among these methods, great efforts have been made for solving the nonlinear gradient flow equations.
Numerically, a gradient flow equation is discretized in both
space and time domains via different discretization techniques, and the stationary state is obtained with proper choices of initialization and step sizes. 
Typical energy stable schemes to gradient flows include convex
splitting\,\cite{xu2006stability,wise2009energy}, stabilized factor
methods\,\cite{shen2010numerical}, exponential time differencing
scheme\,\cite{zhu2016fast,du2019maximum,li2021stabilized,du2021maximum}, and recently developed
invariant energy quadrature\,\cite{yang2017numerical}, and scalar auxiliary variable approaches\,\cite{shen2018scalar}. 
When designing the fully discretized scheme, choosing proper time steps greatly impacts the performance of the methods. Most existing methods fix the time step or obtain the time step using heuristic methods\,\cite{qiao2011adaptive}.

In this work, instead of designing a numerical scheme for the gradient flow, we formulate the infinite-dimensional problem \eqref{eq:infitemin} as a finite-dimensional block-wise non-convex problem via appropriate spatial discretization schemes. Similar ideas have shown
success in computing stationary states of many physical problems,
such as the Bose-Einstein condensate\,\cite{wu2017regularized}, the calculation of density functional theory\,\cite{liu2015analysis} and one component PFC models\,\cite{jiang2020efficient}. It is noted that extending single component systems to multicomponent models is not trivial due to the following two reasons. First, the direct extension may fail convergence, even for the simplest steep descent method\,\cite{powell1973search}. Second, the update order's choice is not unique, leading to difficulty in analyzing the convergence of the numerical algorithm.
Therefore, it deserves to develop specialized numerical algorithms for the multicomponent systems, which require carefully examine or extend the corresponding analysis in a single component system.

In this paper, based on the newly developed optimization techniques, we propose an
adaptive block Bregman proximal gradient (AB-BPG) method
for solving the discretized problem with multi-block structures. The proposed algorithm has the desired energy dissipation and mass conservation properties. Theoretically, we prove the convergence property of the proposed algorithm without the global Lipschitz constant requirement. It guarantees that the algorithm converges to a stationary state given any initial point. Our method updates each order parameter function with adaptive step sizes by the line search method in practice. Compared with our previous
work\,\cite{jiang2020efficient} for single-component PFC models, the main contributions of this paper include
\begin{itemize}
	\item We propose an AB-BPG algorithm for arbitrary multiple components PFC models by
	involving in the block structures. The sequence of blocks can be updated
	either deterministically cyclic or randomly shuffled for each iteration. Moreover, the convergence property without the global Lipschitz constant assumption of the derivative of the bulk energy $F$ is
	rigorously proved once each block is updated at least once in every $T$ iterations;
	\item Together with a practical line search strategy, the sequence $\{\Phi^k=\{\phi_i^k\}_{i=1}^s\}$ generated by the proposed method has the generalized energy dissipation property, i.e., \ one of the following properties holds:
	\begin{enumerate}
		\item The sequence has the energy dissipation property;
		\item There exist a subsequence $\{\Phi^{k_j}\}\subset\{\Phi^k\}$ and a constant $M\in\mathbb{N}$ such that $1\leq k_{j+1}-k_j\leq M+1$ and $E(\Phi^{k_{j+1}})-E(\Phi^{k_j})\geq 0$, $\forall j$;
	\end{enumerate}
	\item  Extensive numerical experiments on computing the stationary states in the binary, ternary, and quinary 
	coupled-mode Swift-Hohenberg model has shown the advantages of the proposed method in terms of computational efficiency.
\end{itemize}

The rest of this paper is organized as follows. Section~\ref{sec:model} presents a
concrete multicomponent PFC model, i.e., the coupled-mode Swift-Hohenberg (CMSH) model,
and a spatial discretization formulation based on the
projection method.  In section~\ref{sec:method}, we propose the adaptive block
Bregman proximal gradient (AB-BPG) methods for solving the constrained
non-convex multi-block problems with proved convergence. 
In section~\ref{sec:appCMSH}, we apply the proposed approaches to the CMSH model with two choices of Bregman divergence.  Numerical results are reported in
section~\ref{sec:results} to illustrate the efficiency and accuracy of our algorithms.
	
	\section{Problem formulation}\label{sec:model}
	
% 	\subsection{Coupled-mode Swift-Hohenberg model}
	
	There are several multicomponent PFC models to describe the phase behaviors of alloys and soft-matters\,\cite{elder2007phase,alster2017simulating,alster2017phase,ofori2013multicomponent,taha2019phase,nestler2005multicomponent,nestler2005multicomponent,greenwood2011modeling,mermin1985mean,jiang2016stability,jiang2020stability}.
	In this work, we consider the coupled-mode Swift-Hohenberg (CMSH) model of
	multicomponent systems, which extends the classical Swift-Hohenberg model from
	one length scale to multiple length scales\,\cite{swift1977hydrodynamic,jiang2020stability,jiang2016stability}. 
	The CMSH  model allows the study of the formation and relative stability of periodic
	crystals and quasicrystals.  Define the integral average 
	\begin{equation}
	\bbint=
	\begin{cases}
	\frac{1}{|\Omega|} \int_{\Omega}, & \mbox{ for periodic crystals,}\\
	\lim\limits_{R\rightarrow \infty}\frac{1}{|B_R|}\int_{B_R}, & \mbox{ for quasicystals,}
	\end{cases}
	\end{equation}
	where $\Omega$ is a bounded domain and $B_R$ is a ball centered at origin with radii $R$. The free energy of the CMSH model for $s$ component system is
	\begin{align}\label{eq:energy}
	\begin{split}
	E[\{\phi_j(\br)\}_{j=1}^s] &= 
	\bbint \Big\{\dfrac{1}{2}\sum_{j=1}^s[(\nabla^2 + q_j^2)\phi_j(\br)]^2 +
	\sum_{\calI_{s,n}}\tau_{i_1,i_2,\cdots,i_s}\prod_{j=1}^{s}\phi_j^{i_j}(\br)\Big\}d\br,
	%&:= \sum_{j=1}^s G[\phi_j] +  F[\{\phi_j(\br)\}_{j=1}^s].
	\end{split}	
	\end{align}
	where $\phi_j$ is the $j$-th order parameter, $ q_j >0$ is the $ j $-th characteristic length scale, $\tau_{i_1,i_2,\cdots,i_s}$ is interaction intensity related to the physical
	conditions, and $\calI_{s,n}$ is the index set defined as
	\begin{align*}
	\calI_{s,n} := \Bigg\{(i_1,i_1,\cdots,i_s):i_j\in\mathbb{N}~(j=1,2,\cdots,s),~ 1\leq
	\sum_{j=1}^s i_j\leq n \Bigg\}.
	\end{align*}
	Moreover, to conserve the average density, each order parameter $\phi_j$ satisfies
	\begin{align}
	\bbint \phi_j(\br)\,d\br = 0.
	\end{align}
	Theoretically, the ordered patterns including periodic and quasiperiodic
	structures correspond to local minimizers of the free energy functional \eqref{eq:energy} with
	respect to order parameters $ \phi_j$ $(j=1,2,\cdots,s)$. Thus, denote
	\begin{equation}\label{defined_GandF}
	G_j[\phi_j] = \bbint\dfrac{1}{2} [(\nabla^2 + q_j^2)\phi_j(\br)]^2d\br,~~ F[\{\phi_j(\br)\}_{j=1}^s] =\bbint \sum_{\calI_{s,n}}\tau_{i_1,i_2,\cdots,i_s}\prod_{j=1}^{s}\phi_j^{i_j}(\br)d\br,
	\end{equation}
	we focus on solving the minimization:
	\begin{align}\label{infpro}
	\begin{split}
	\min\quad &E[\{\phi_j(\br)\}_{j=1}^s] = \sum_{j=1}^s G_j[\phi_j(\br)] +  F[\{\phi_j(\br)\}_{j=1}^s]\\
	\mathrm{s.t.}\quad  & 	
	\bbint \phi_j(\br)d\br = 0, \quad j = 1,2,\cdots,s.
	\end{split}	
	\end{align}
	Throughout this paper, we assume that $n\leq 4$ in \eqref{defined_GandF} which means that bulk energy $F$ is a $4^{th}$ order polynomial.
	
	In this work, we pay attention to the periodic and quasiperiodic crystals and use the projection method\,\cite{jiang2014numerical} to discretize the CMSH free energy functional. After discretization, the infinite-dimensional problem \eqref{infpro} can be formulated to a finite-dimensional minimization problem in the form of
	\begin{equation}\label{finitepro}
	\begin{split}
	\min_{\hPhi} ~  E(\hPhi) = \sum_{j= 1}^{s}G_j(\bhphi_j) + F(\hPhi),
	\quad\mathrm{s.t.}\quad  e_1^\top \bhphi_j = 0,\quad j = 1,2,\cdots,s.
	\end{split}
	\end{equation}
	where $ \bhphi_j \in\bbC^{N_j} $ is the truncated Fourier coefficients corresponding to $\phi_j$ and $ \hPhi = \{\bhphi_j\}_{j =1}^s\in\bbC^{\bN} $ with $\bN = \sum_{j=1}^s\bN_j$. $ G_j(\bhphi_j) = \dfrac{1}{2}\langle \bhphi_j, \calD_j \bhphi_j\rangle $ and $ \calD_j\in\bbC^{\bN_j\times \bN_j} $ is a diagonal matrix.
	$ F(\{\bhphi_j\}_{j=1}^s) $ are $n$-dimensional convolutions in the reciprocal
	space. A direct evaluation of the nonlinear term $
	F(\{\bhphi_j\}_{j=1}^s) $ is extremely expensive, however, $F(\{\bhphi_j\}_{j=1}^s) $ is a simple multiplication in the $ n$-dimensional physical space.  
	Thus, the pseudospectral method takes the advantage of this observation
	by evaluating $ G_j(\bhphi_j)  $ in the Fourier space and
	$F(\{\bhphi_j\}_{j=1}^s)$ in the physical space via the Fast Fourier Transformation. For the self-containess, we leave the concrete discretization to the Appendix A.
	
	Compared to the single-component case, the problem \eqref{finitepro} has the block structure in terms of the order parameters. Besides, the objective function is the summation of non-separable bulk energy and separable interaction energy that facilitates the numerical algorithm's design. In the next section, we aim at designing efficient optimization-based algorithms for
	solving the non-convex and multi-block problem \eqref{finitepro}.
	It is worth noting that the proposed AB-BPG method can also be applied to other spatial discretization methods.
	
	\section{The proposed method}\label{sec:method}
In this section, we consider the minimization problem in the form of
\begin{equation}\label{GeneralFormulation}
\begin{split}
\min_X\quad& E(X) = f(X) + \sum_{j= 1}^s g_j(\bx_j) \\
\mathrm{s.t.}\quad&~ \bx_j\in\calS_j, \quad j = 1,2,\cdots,s.
\end{split}
\end{equation}
where $ \bx_j\in\bbC^{N_j} $, $\calS_j$ is the feasible space of variable $\bx_j$,
$ X = \{\bx_j\}_{j=1}^s \in\bbC^{N} $ with $ N = \sum_{j=1}^sN_j $.
It is easy to know that the problem \eqref{finitepro} can be reduced to
\eqref{GeneralFormulation} by setting $f= F$, $g_j=G_j $ and $\calS_j =
\{\bhphi_j:e_1^\top \bhphi_j = 0\}$. 
Throughout this paper, we make the following assumptions on the objective function (see the concrete definition of notations in the next subsection).
\begin{assumption}~\label{assum1}
	\begin{enumerate}
		\item  $f: \bbC^N\to(-\infty,\infty]$ is proper and continuously differential on $ \bbC^N $ but may not be convex.
		\item  $g_j:\bbC^{N_j}\to(-\infty,\infty]$ is proper, lower semicontinuous and convex.
		\item  $ \calS_j\subseteq \dom g_j $ is a nonempty, closed and convex set. %of $ \bbC^{N_j} $. $ \calS_j \cap \dom g_j\neq \emptyset$. 
		\item $ E $ is bounded below, and level bounded.	
		\item  For all $ X \in\dom E$, $\pi_j(\calB(X))\subseteq \ridom g_j $ where $ \calB(X) $ is the closed ball that contains the sub-level set $ [E\leq E(X)] \cap\prod_{j=1}^s\calS_j$.	
	\end{enumerate}	
\end{assumption}

Before presenting our numerical algorithm, we first introduce some notations and useful definitions in the following analysis. Then, we give an abstract framework of the first
order method with proved convergence. 
Two kinds of concrete numerical algorithms to the CMSH model based on the
abstract formulation for solving \eqref{finitepro} will be presented in the
section \ref{sec:appCMSH}.

\subsection{Notations and definitions}
\label{subsec:notations}

We denote $ \prod_{j=1}^s\calS_j:=\{X=(\bx_1,\ldots,\bx_s):\bx_j\in\calS_j,\forall j=1,\ldots,s\} $ and
the projection operator is defined as $ \pi_j:\bbC^N\to \bbC^{N_j}, X\mapsto \bx_j$.
For a subset $ S\subseteq \bbC^N $, $ \pi_j(S):=\{\bx_j:\bx_j = \pi_j(X),~\forall
X\in S\} $. Let $ C^k(S) $ be the $ k $-th continuously differential functions on $ S
$. The domain of a  function $ f: \bbC^N \to \bbR$ is defined as $ \dom
f:=\{x:f(x)<+\infty\} $ and the relative interior of $ \dom f $ is defined as $\ridom
f:= \{x\in\dom f:\exists~r >0,  B(x,r)\cap \affdom f \subseteq \dom f \} $, where $
\affdom f $ is the smallest affine set that contains $ \dom f $ and $
B(x,r):=\{y:\|y-x\|\leq r\} $. $ f $ is proper if $ f > -\infty $ and $  \dom f\neq
\emptyset$.  For $ \alpha\in\bbR $, $  [f \leq \alpha]:= \{x : f(x) \leq \alpha\}$
is the $ \alpha $-(sub)level set of $ f $. We say that $ f $ is level  bounded
if $ [f\leq \alpha] $ is bounded for all $ \alpha\in \bbR$.  $ f $ is lower
semicontinuous if all level set of $ f $ is closed. The subgradient of $ f $
at $ x \in \dom g $ is defined as $ \partial f(x) = \{u : f(y) - f(x) -
\langle u, y- x\rangle \geq 0, \forall y \in \dom f\} $. For $ a\in\bbR $,
we denote $ [a]^+:=\max\{0, a\} $. 
Moreover, the next table summarizes the notations used in this work. 
\begin{table}[!htpb]
	\centering
	\caption{Summary of notations}
	\begin{tabular}{|c|c|}
		\hline
		Notation         & Definition                     \\ \hline
		$s$        & the total number of blocks                  \\ \hline
		$b_k$      & the update block selected at the $ k $-th iteration       \\ \hline
		$n_j^k$    & the number of updates to $ \bx_j $ within the first $ k $ iterations \\ \hline
		$\bx_j^k$  & the value of $ \bx_j $ after the $ k $-th iteration     \\ \hline
		$\tbx_j^n$ & the value of $ \bx_j $ after $ n$-th update       \\ \hline
		$\by^k$  & the value of extrapolation point $ \by $ at the  $k-$th iteration    \\ \hline			
		$ w_k $& the extrapolation  weight used at the $ k- $th iteration    \\ \hline
		$m_k$     & $m_k = \argmin_{j}\{E(X^j):[k-M]^+\leq j\leq k\}$ \\ \hline
		$X$  & $ X = (\bx_1,\bx_2,\cdots, \bx_s) $   \\ \hline
		$\bx_{\neq j}$  & the value of $ (\bx_1,\cdots,\bx_{j-1},\bx_{j+1},\cdots,\bx_s) $   \\ \hline
		%	$d^k$  & $ d^k = X^{k+1} - X^k $   \\ \hline
		$\nabla_j f(X)$ & the partial gradient of $ f(X) $ with respect to $ \bx_j $     \\ \hline
		%	$\calS_j $ & the feasible set of $ \bx_j $.     \\ \hline
	\end{tabular}
	\label{tab:notation}
\end{table}

Throughout this paper, we assume that $h:\bbC^N\to (-\infty, +\infty]$ is a strongly convex function and give the following useful definitions. 
\begin{definition}[Bregman divergence~\cite{li2019provable}] The Bregman
	divergence with respect to  $h\in C^1(\intdom h)$ is defined as
	\begin{align}\label{eqn:bregmandiv}
	D_h(x, y) = h(x)-h(y) - \langle \nabla h(y), x-y\rangle,\quad\forall~ (x,y)\in\dom h \times \intdom h.
	\end{align}
\end{definition}
It is noted that $D_h(x,y)\geq0$ and $D_h(x,y)=0$ if and only if $x=y$ due to
the strongly convexity of $h$. Moreover, $ D(\cdot,y) $ is also strongly convex
for any fixed $ y\in\intdom h $. The Bregman divergence with $h = \|\cdot\|^2/2$ reduces to the Euclidean distance. Using the Bregman divergence, we can generalize the Lipschitz condition to the so-called relative smoothness as follows.
\begin{definition}[Relative smoothness \cite{bauschke2016descent}]
	A function $ f$ is called $R_f$-smooth relative to $ h  $ if there exists $ R_f >0 $ such that 
	$ R_fh(x) - f(x) $ is convex for all  $ x\in\dom h $.
\end{definition}
When $h = \|\cdot\|^2/2$ , the relative smoothness reduces to the Lipschitz
smoothness which is an essential assumption in the analysis of many scheme in
computing gradient flows (such as semi-implicit\,\cite{elliott1993global} or
stabilized semi-implicit\,\cite{shen2010numerical}). However, this assumption greatly limits its application range in practical computation. In this work, we overcome this difficulty from numerical optimization using this novel tool. 
To deal with the multi-block problem of form \eqref{GeneralFormulation}, we generalize the definition of relative smoothness to block-wise function as the definition 2.4 in\,\cite{ahookhosh2019multi}.
\begin{definition}[Block-wise relative smoothness]
	For a block-wise function $ f(X) $, we call $ f(X) $ is $ (R_f^1, R_f^2,\cdots,R_f^s) $-smooth relative to $ (h_1,h_1,\cdots,h_s) $ if for each $ j $-th block and fixed $ \bx_{\neq j} $,
	\begin{itemize}
		\item $ h_j:\bbC^{N_j}\to(-\infty, +\infty] $ is $ \gamma_i $-strongly convex and $\dom h_j \subseteq  \{\bu: f(\bu, \bx_{\neq i}) <\infty\}  $. 
		\item $ F_j(\bu) := f(\bu, \bx_{\neq j}) $ is $ R_f^j $-smooth  relative to $ h_j $ with respect to $ \bu $.
	\end{itemize}
\end{definition}
Now, we are ready to present the numerical algorithm in the next subsection.

\subsection{AB-BPG algorithm}
Our main idea is to develop a kind of block coordinate
descent methods which minimize $E$ cyclically over each of
$\bx_1,\bx_2,\cdots,\bx_s$ while fixing the remaining blocks at their last
updated values, i.e., the Gauss-Seidel fashion. 
Precisely, given feasible $ X^k,X^{k-1} \in \prod_{j=1}^s\calS_j$, our 
AB-BPG method can pick $ b_k\in\{1,2,\cdots, s\} $
deterministically or randomly, then $ X^k = (\bx_1^k,\bx_2^k,\cdots,\bx_s^k) $ is
updated as follows
\begin{align}
\begin{cases}
\bx_i^{k+1} = \bx_i^k, & \text{if } i\neq b_k\\
\bx_i^{k+1}  = \argmin\limits_{\bz\in \calS_i}\Big\{g_i(\bz) +  \langle \nabla_i f(\by^k, \bx_{\neq i}^k), \bz - \by^k\rangle + \dfrac{1}{\alpha_k}D_{h_i}(\bz, \by^k)\Big\} & \text{if } i = b_k
\end{cases}
\label{Bprox}
\end{align}
where $ \alpha_k>0 $ is the step size and  $ \by^k $ is the extrapolation
\begin{align}
\by^k = (1+w_k)\bx_i^k - w_k\bx_i^{\text{prev}}=(1+w_k)\tbx_i^{n_i^k} - w_k\tbx_i^{n_i^k-1}.
\end{align}
The extrapolation weight $ w_k\in[0,\bar{w}] $ for some $ \bar{w} >0$ and $
\bx_i^{\text{prev}} $ is the value of $ \bx_i $ before it is updated to $ \bx_i^k $.
The definitions of $ \tbx_i $ and $ n_i^k $ can be found in Table \ref{tab:notation}.

To ensure the convergence, we make mild assumptions for the distance generating function $h_i,~i=1,2,\ldots,s$ and the block update order~\cite{xu2017globally}.
\begin{assumption}\label{assum2}
	There exist stongly convex functions $h_j, j=1,2,\ldots,s$, and positive constants $R_f^j, j=1,2,\ldots,s$ such that $\calS_j\subseteq\intdom h_j, j=1,2,\ldots,s$, and $f$ is  $ (R_f^1, R_f^2,\cdots,R_f^s)$-smooth relative to $ (h_1,h_1,\cdots,h_s) $.
\end{assumption}
\begin{assumption}\label{assum3}
	With any $ T \geq s$ consecutive iterations, each block should be updated at least once, i.e., for any $k$, it has $\{1,2,\ldots,s\}\subseteq\{b_k,b_{k+1},\ldots,b_{k+T}\}$.
\end{assumption}

In the following context, we show some properties of the iterates~\eqref{Bprox}.
First, define the $ j$-th block Bregman proximal gradient mapping $ T_\alpha^j:\prod_{i=1}^s\calS_i\to \bbC^{N_j} $ as
\begin{equation}\label{BPGmap}
T_{\alpha}^j(X) : = \argmin_{\bz\in \calS_j}\Big\{g_j(\bz) +  \langle \nabla_j f(X), \bz - \bx_j\rangle + \dfrac{1}{\alpha}D_{h_j}(\bz, \bx_j)\Big\}.
\end{equation}
The next lemma shows the well-posedness of $ T_\alpha^j $.
\begin{lemma}\label{lem:welldefine}
	Suppose Assumption \ref{assum1} and Assumption \ref{assum2} hold. The map $ T_\alpha^j $ defined in \eqref{BPGmap} is nonempty and single-valued from $ \prod_{i=1}^s\calS_i $ to $ \calS_j $.
\end{lemma}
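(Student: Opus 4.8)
The plan is to recognize that, for fixed $X\in\prod_{i=1}^s\calS_i$ and $\alpha>0$, the map $T_\alpha^j$ amounts to minimizing a single strongly convex function over a nonempty closed convex set, after which nonemptiness and single-valuedness follow from the standard existence-and-uniqueness theory for strongly convex minimization. Concretely, I would write
\[
\psi(\bz) := g_j(\bz) + \la \nabla_j f(X),\, \bz - \bx_j\ra + \tfrac{1}{\alpha}\,D_{h_j}(\bz, \bx_j)
\]
for the objective in \eqref{BPGmap}, and first check that $\psi$ is well-defined and proper on $\calS_j$. The anchor point satisfies $\bx_j = \pi_j(X)\in\calS_j\subseteq\intdom h_j$ by Assumption \ref{assum2}, so $\nabla h_j(\bx_j)$ exists and $D_{h_j}(\cdot, \bx_j)$ is defined on $\dom h_j\supseteq\calS_j$. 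Since $\calS_j\subseteq\dom g_j$ by Assumption \ref{assum1}, we have $\psi(\bx_j)=g_j(\bx_j)<\infty$, so $\psi$ is not identically $+\infty$ on $\calS_j$.

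Next I would establish strong convexity and lower semicontinuity of $\psi$. The term $g_j$ is lower semicontinuous and convex by Assumption \ref{assum1}, the middle term is affine in $\bz$, and because $h_j$ is $\gamma_j$-strongly convex, $D_{h_j}(\cdot,\bx_j)=h_j(\cdot)-h_j(\bx_j)-\la\nabla h_j(\bx_j),\cdot-\bx_j\ra$ is $\gamma_j$-strongly convex. Hence $\psi$ is lower semicontinuous and $(\gamma_j/\alpha)$-strongly convex on $\calS_j$. For existence I would argue coercivity directly from strong convexity: writing $\mu=\gamma_j/\alpha$, the function $\psi-\tfrac{\mu}{2}\|\cdot\|^2$ is proper, lower semicontinuous and convex, hence bounded below by an affine minorant $\ell$, so $\psi(\bz)\ge \ell(\bz)+\tfrac{\mu}{2}\|\bz\|^2\to+\infty$ as $\|\bz\|\to\infty$. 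Thus the sublevel sets of $\psi$ are bounded, and restricting to the nonempty closed convex set $\calS_j$ and invoking the Weierstrass theorem yields a minimizer; this shows $T_\alpha^j(X)\ne\emptyset$. Uniqueness then follows because a strictly (indeed strongly) convex function cannot attain its minimum at two distinct points, and the minimizer lies in $\calS_j$ by construction, giving the asserted map $\prod_{i=1}^s\calS_i\to\calS_j$.

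The only delicate point, and the step I would treat most carefully, is the domain bookkeeping: verifying that the anchor $\bx_j$ sits in $\intdom h_j$ so the Bregman term is well-defined, and that $\psi$ is proper on $\calS_j$. Once these are secured, strong convexity supplies both coercivity (hence attainment) and uniqueness, and the remaining arguments are routine applications of standard convex-analysis facts.
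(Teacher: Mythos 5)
Your proof is correct and follows essentially the same route as the paper: both arguments observe that the objective is proper, lower semicontinuous, and strongly convex (hence coercive), then invoke attainment of the minimum on the nonempty closed convex set $\calS_j$ together with strict convexity for uniqueness. The only difference is cosmetic — you spell out the coercivity via an affine minorant and check properness at the anchor point explicitly, whereas the paper delegates these steps to standard references.
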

\begin{proof}
	\nonumber
	Since $ f\in C^1(\bbC^n) $ and $ \calS_j\subset \intdom h_j $,  $ \nabla_j f(X) $ and $  D_{h_j}(\cdot, \bx_j) $ is well-defined.  Let 
	\begin{equation*}
	\begin{split}
	\Gamma_j(\bz) :=~& \alpha g_j(\bz) +  \alpha\langle \nabla_j f(X), \bz - \bx_j\rangle + D_{h_j}(\bz, \bx_j),
	%	=~&\alpha g_j(\bz) + h_j(\bz)  + \langle \xi_j, \bz \rangle - \beta_j,
	\end{split}
	\end{equation*}
	we know that $ \Gamma_j $ is strongly convex due to the convexity of $ D(\cdot, \bx_j)$ and $ g_j $. Thus,  $ \Gamma_j $ is  coercive (\cite{bauschke2011convex}, Corollary 11.17). According to the Corollary 3.23 in\,\cite{brezis2010functional}, $ \Gamma_j $ achieves its minimum on $ \calS_j $ and the strongly convexity implies the uniqueness of minimum.
\end{proof}

\begin{remark}
	In Assumption \ref{assum1},  $ \calS_j$ is convex for all $ j $. Thus, Lemma
	\ref{lem:welldefine} implies that the iteration $ \bx_i^{k+1} =
	T_{\alpha_k}^i(\by^k,\bx_{\neq i}^k) $ in \eqref{Bprox} is well-defined as
	long as  $ X^k, X^{k-1}\in\prod_{j=1}^s\calS_j $ is set in the initialization.
\end{remark}
The next lemma shows that the mapping $T_{\alpha}^j$ has the descent property.
\begin{lemma}[Sufficient decrease property]\label{lem:Suffdescent}
	Suppose Assumption \ref{assum1} and Assumption \ref{assum2} hold. Let $X = \{\bx_i\}_{i=1}^s\in\prod_{i=1}^s\calS_i$ and $\bx_j^+ = T_{\alpha}^j(X)$, then we have 
	\begin{align}
	E(X) - 	E(\bx_j^+, \bx_{\neq j})\geq \left(\dfrac{1}{\alpha} - R_f^j\right)\dfrac{\gamma_j}{2} \|\bx_j^+ - \bx_j\|^2 ,
	\label{eq:Suffdescent}
	\end{align}
	where $ \gamma_j$ is the strong convexity coefficient of $ h_j $.
\end{lemma}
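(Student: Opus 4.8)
The plan is to measure the decrease block by block. Since the update $\bx_j^+ = T_\alpha^j(X)$ changes only the $j$-th coordinate, every term $g_i(\bx_i)$ with $i\neq j$ is common to $E(X)$ and $E(\bx_j^+,\bx_{\neq j})$ and cancels, so the whole energy gap splits as $E(X) - E(\bx_j^+,\bx_{\neq j}) = \big[f(X) - f(\bx_j^+,\bx_{\neq j})\big] + \big[g_j(\bx_j) - g_j(\bx_j^+)\big]$. I would bound the first bracket from below using the block-wise relative smoothness of $f$ and the second using the minimality that defines $T_\alpha^j$.

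For the smooth part, fix $\bx_{\neq j}$ and set $F_j(\bu) = f(\bu,\bx_{\neq j})$. By Assumption \ref{assum2}, $R_f^j h_j - F_j$ is convex; applying its gradient inequality at the pair $(\bx_j^+,\bx_j)$ and rearranging with the definition \eqref{eqn:bregmandiv} of the Bregman divergence gives the descent estimate
\[ f(X) - f(\bx_j^+,\bx_{\neq j}) \geq -\langle \nabla_j f(X), \bx_j^+ - \bx_j\rangle - R_f^j\, D_{h_j}(\bx_j^+,\bx_j). \]
For the nonsmooth part, I would use that $\bx_j^+$ is the minimizer over $\calS_j$ of $\psi(\bz) := g_j(\bz) + \langle \nabla_j f(X), \bz - \bx_j\rangle + \frac{1}{\alpha} D_{h_j}(\bz,\bx_j)$, which is well-posed by Lemma \ref{lem:welldefine}. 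Comparing $\bx_j^+$ with the feasible competitor $\bx_j\in\calS_j$ yields $\psi(\bx_j^+) \leq \psi(\bx_j) = g_j(\bx_j)$, the last equality because the linear and Bregman terms vanish at $\bz = \bx_j$; rearranging gives
\[ g_j(\bx_j) - g_j(\bx_j^+) \geq \langle \nabla_j f(X), \bx_j^+ - \bx_j\rangle + \frac{1}{\alpha} D_{h_j}(\bx_j^+,\bx_j). \]

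Adding the two displays, the inner-product terms cancel exactly and I obtain $E(X) - E(\bx_j^+,\bx_{\neq j}) \geq \big(\frac{1}{\alpha} - R_f^j\big) D_{h_j}(\bx_j^+,\bx_j)$, valid for any $\alpha > 0$. To pass to the advertised $\|\bx_j^+ - \bx_j\|^2$ form I would invoke the $\gamma_j$-strong convexity of $h_j$, which yields $D_{h_j}(\bx_j^+,\bx_j) \geq \frac{\gamma_j}{2}\|\bx_j^+ - \bx_j\|^2$, and substitute. The step deserving care is the sign of the coefficient $\frac{1}{\alpha} - R_f^j$: this last substitution preserves the desired inequality direction only when the coefficient is nonnegative, i.e. when $\alpha \leq 1/R_f^j$, which is exactly the step-size regime the subsequent line search maintains. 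I expect the genuinely substantive step to be the first one — correctly converting the block-wise relative smoothness hypothesis into the Bregman descent lemma — while the remainder is cancellation and a one-line strong-convexity bound.
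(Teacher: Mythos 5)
Your proof is correct and follows essentially the same route as the paper's: both combine the Bregman descent inequality obtained from block-wise relative smoothness with the minimality of the proximal map tested against the feasible competitor $\bx_j$, cancel the linear terms, and finish with the $\gamma_j$-strong convexity of $h_j$. Your explicit caveat that the final substitution $D_{h_j}(\bx_j^+,\bx_j)\geq \frac{\gamma_j}{2}\|\bx_j^+-\bx_j\|^2$ preserves the inequality direction only when $\frac{1}{\alpha}-R_f^j\geq 0$ is a point the paper's proof passes over silently, and is worth retaining.
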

\begin{proof}
	Due to the block-wise relative smoothness of $
	f(X)$, the function $F_j(\bu) := R_f^j h_j(\bu) - f(\bu, \bx_{\neq j})$ is convex for any fixed $\bx_{\neq j}$.
	Thus, for any $\bz \in\intdom h_j$, we have
	\begin{equation}\label{ieq;RelativeSmooth}
	f(\bu,\bx_{\neq j})  \leq  f(\bz,\bx_{\neq j}) + \langle \nabla_j f(\bz,\bx_{\neq j}), \bu- \bz\rangle + R_f^j D_{h_j}(\bu, \bz).
	\end{equation}
	Together with the definition of mapping $T_{\alpha}^j$, we know that
	\begin{align*}
	& E(X) =  f(X) + g_j(\bx_j) + \sum_{i\neq j} g_i(\bx_i)\\
	& =  f(X) + \Big[\langle \nabla_j f(X), \bz- \bx_j\rangle + \dfrac{1}{\alpha}D_{h_j}(\bz, \bx_j) + g_j(\bz)\Big]_{\bz = \bx_j} + \sum_{i\neq j} g_i(\bx_i)\\
	&\geq  f(X) + \langle \nabla_j f(X), \bx_j^+- \bx_j\rangle + \dfrac{1}{\alpha}D_{h_j}(\bx_j^+, \bx_j) + g_j(\bx_j^+) + \sum_{i\neq j} g_i(\bx_i)\\
	& \geq f(\bx_j^+,\bx_{\neq j}) - R_f^j D_{h_j}(\bx_j^+, \bx_j) +  \dfrac{1}{\alpha}D_{h_j}(\bx_j^+, \bx_j) + g_j(\bx_j^+) + \sum_{i\neq j} g_i(\bx_i)\\
	& = E(\bx_j^+,\bx_{\neq j}) + \left(\dfrac{1}{\alpha} - R_f^j\right)D_{h_j}(\bx_j^+, \bx_j)\geq E(\bx_j^+,\bx_{\neq j}) + \left(\dfrac{1}{\alpha} - R_f^j\right)\dfrac{\gamma_j}{2} \|\bx_j^+- \bx\|^2.
	\end{align*}
	The second inequality holds by setting $\bu = \bx_j^+$ and $\bz = \bx_j$ in \eqref{ieq;RelativeSmooth}, and the last inequality holds from the $ \gamma_i $-convexity of $ h_i $.
\end{proof}

\textbf{Restart technique.}
Let $i = b_k$, the {iteration \eqref{Bprox}}
can be written as $X^{k+1} = T_{\alpha_k}^i(\by^k, \bx_{\neq i}^k)$, then Lemma~\ref{lem:Suffdescent} implies
that $E(\by^k,\bx_{\neq i}^k) \geq E(X^{k+1})$ as long as $\alpha_k \in (0, 1/R_f^i]$. However, this descent property is not enough for investigating the iterates $\{E(X^{k})\}$ as the relationship between $E(\by^k,\bx_{\neq i}^k)$ and $E(X^k)$ is not clear. Thus, it may lead to the energy oscillation in the sequence $\{E(X^k)\}$. To overcome this weakness, we propose a restart technique by setting $(\by^k,\bx_{\neq i}^k)=X^k$ when the sharp oscillation is detected.
In concrete,  given $ \alpha_k>0$ and $ i= b_k $, define 
\begin{equation}\label{eq:updateBPG}
\bz^k =\Gamma_{\alpha_k}^i(\by^k,\bx_{\neq i}^k) =\argmin_{\bz\in\calS_i}\left\{g_i(\bz) +  \langle \nabla_i f(\by^k,\bx_{\neq i}^k), \bz - \by^k\rangle + \dfrac{1}{\alpha_k}D_{h_i}(\bz, \by^k)\right\}.
\end{equation}
Given some non-negative integer constant $ M $, we define
\begin{equation}\label{def:m_k}
m_k = \argmax_{[k-M]^+\leq j\leq k }E(X^j), 
\end{equation}
and set $ X^{k+1} = X^k$, $w_{k+1}=0 $ if the following inequality
\begin{align}\label{restartcri}
E(X^{m_k})-E(\bz^k, \bx_{\neq i}^k)\geq \sigma\|\bx_i^k-\bz^k\|^2
\end{align}
does not hold where $\sigma>0$ is a small constant. Otherwise, we obtain $ X^{k+1} $ via $ \bx_i^{k+1} = \bz^k$, $\bx_j^{k+1} = \bx_j^k~(j\neq i) $ and update $ w_{k+1} \in [0,\bar{w}]$.
\begin{remark}
	\label{rmk:lem2}
	When $M=0$, it guarantees that $\{E(X^k)\}$ is monotone decreasing. When $M>0$, the scheme has the generalized descent property, i.e.,\ the subsequence $\{E(X^{m_k})\}$ is decreasing (see Lemma~\ref{lemma:de}).
\end{remark}

\textbf{Step size estimation.}
Let $i=b_k$, Lemma~\ref{lem:Suffdescent} shows that $E(\by^k,\bx_{\neq i}^k) \geq E(X^{k+1})$ is ensured by step size $\alpha_k \in (0, 1/R_f^i]$
which may be too conservative. Thus, we propose 
a non-monotone backtracking line
search method\,\cite{lu2013randomized} for finding the appropriate step $\alpha_k$ which is initialized by the similar
idea of BB method\,\cite{barzilai1988two}, i.e.,  \begin{align}\label{stepestimation}
\alpha_k = \begin{cases}
\alpha_0, & w_k = 0,\\
\dfrac{\langle u_i^k, u_i^k\rangle}{\langle u_i^k, v_i^k \rangle}\text{ or } \dfrac{\langle v_i^k, u_i^k\rangle}{\langle v_i^k, v_i^k\rangle},& w_k\neq 0,
\end{cases}	
\end{align}
where $ u_i^k =  \by^k- \bx_i^k$ and $v_i^k = \nabla_i
f(\by^k, \bx_{\neq i}^k)-\nabla_i f(X^k)$. Let $ \eta\in (0, \sigma]  $ be a constant and $
\bz^k $ is obtained from \eqref{eq:updateBPG}, we adopt the step size $ \alpha_k\in[\alpha_{\min},\alpha_{\max}]$
whenever the following inequality holds
\begin{equation}\label{linesearch}
\max(E(\by^k, \bx_{\neq i}^k),E(X^{m_k}))-E(\bz^k, \bx_{\neq i}^k)\geq\eta\|\by^k-\bz^k\|^2.
\end{equation}
Let $ R := \max\limits_{j=1,2,\cdots s} \{R_j\} $ and $ \gamma =
\max\limits_{j=1,2,\cdots s} \{\gamma_j\} $. Lemma~\ref{lem:Suffdescent}, the inequality \eqref{linesearch} holds whenever $0<\alpha_{\min}<\gamma/(2\eta +\gamma R)$. Thus, the line search scheme will terminate in finite iterations. In summary, we present the detailed algorithm for estimating step sizes in Algorithm~\ref{alg:eststep} and the proposed AB-BPG method in Algorithm~\ref{alg:BadaAPG}.
\begin{algorithm}[H]
	\caption{Estimation of $\alpha_k$ at $\by^k$ with respect to block $ i $}
	\label{alg:eststep}
	\begin{algorithmic}[1]
		\STATE {\bf Inputs:} $X^k$, $\by^k$, $\varsigma\in(0,1)$ and $\eta>0$, $ \alpha_0,\alpha_{\min},\alpha_{\max} >0 $.
		\STATE Initialize $\alpha_k$ by \eqref{stepestimation};
		\STATE Calculate the smallest index $ \ell \geq 0$  such that  \eqref{linesearch} holds at $\bz^k$ defined in \eqref{eq:updateBPG} with step size  $ \varsigma^{\ell}\alpha_k \geq \alpha_{\min}$.
		\STATE {\bf Output:} step size $\alpha_k = \max(\min(\varsigma^{\ell}\alpha_k, \alpha_{\max}), \alpha_{\min})$.
	\end{algorithmic}
\end{algorithm}

\begin{algorithm}[H]
	\caption{AB-BPG method}
	\label{alg:BadaAPG}
	\begin{algorithmic}[1]
		\STATE Initialize $X^{-1}=X^0\in\prod_{j=1}^s\calS_j$, $\sigma\geq\eta>0$ and $ w_0 = 0 $, $ \bar{w},\alpha_0, M \geq 0 $, $ k = 0 $.
		\WHILE{stopping criterion is not satisfied}
		\STATE Pick $ i = b_k \in\{1,2,\cdots, s\}$ in a deterministic or random manner;
		\STATE Update $\by^k = (1+w_k)\tbx_{i}^{n_{i}^k} - w_k\tbx_{i}^{n_{i}^k-1}$
		\STATE Obtain $ \alpha_k $ via Algorithm \ref{alg:eststep}.
		\STATE Calculate $ \bz^k $ via \eqref{eq:updateBPG}.
		\IF{ \eqref{restartcri} holds }
		\STATE Set $\bx_{i}^{k+1} = \bz^k,\bx^{k+1}_j = \bx^k_j(j\neq i)$ and choose $w_{k+1}\in [0,\bar{w}]$.
		\ELSE
		\STATE Restart by setting $ X^{k+1} = X^k $ and $w_{k+1}=0$.
		\ENDIF
		\STATE  $ k = k+1 $.
		\ENDWHILE
	\end{algorithmic}
\end{algorithm}
The next lemma establishes the generalized descent property of the sequence $\{X^k\}$ generated by Algorithm~\ref{alg:BadaAPG}.
\begin{lemma}\label{lemma:de}
	Suppose Assumption \ref{assum1} and Assumption \ref{assum2} hold. Let $\{X^k\}$ be the sequence generated by Algorithm~\ref{alg:BadaAPG}. Then, we have $m_{k+1}\geq m_k$ and $\{E(X^{m_k})\}$ is non-increasing.
\end{lemma}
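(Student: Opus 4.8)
The plan is to reduce everything to a single uniform one-step inequality, namely that $E(X^{k+1}) \le E(X^{m_k})$ holds for every $k$, and then read off both conclusions from the definition of $m_k$ as a windowed maximizer. I would first establish this inequality by examining the two branches of Algorithm~\ref{alg:BadaAPG}. If the restart criterion \eqref{restartcri} holds, the update sets $X^{k+1} = (\bz^k, \bx_{\neq i}^k)$, and \eqref{restartcri} itself gives $E(X^{k+1}) = E(\bz^k,\bx_{\neq i}^k) \le E(X^{m_k}) - \sigma\|\bx_i^k - \bz^k\|^2 \le E(X^{m_k})$. If instead \eqref{restartcri} fails, the algorithm restarts with $X^{k+1} = X^k$; since the index $k$ itself lies in the window $[k-M]^+\le j\le k$, we have $E(X^{k+1}) = E(X^k) \le \max_{[k-M]^+\le j\le k} E(X^j) = E(X^{m_k})$. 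Thus $E(X^{k+1}) \le E(X^{m_k})$ unconditionally.

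For the non-increase of $\{E(X^{m_k})\}$, I would split the window defining $m_{k+1}$ at its right endpoint:
\begin{equation*}
E(X^{m_{k+1}}) = \max_{[k+1-M]^+\le j\le k+1} E(X^j) = \max\Big\{ \max_{[k+1-M]^+\le j\le k} E(X^j),\; E(X^{k+1}) \Big\}.
\end{equation*}
The first term is at most $E(X^{m_k})$ because $[k+1-M]^+\le j\le k$ is a sub-window of $[k-M]^+\le j\le k$ (the left endpoint satisfies $[k+1-M]^+\ge [k-M]^+$, so it only moves right), and the second term is at most $E(X^{m_k})$ by the uniform inequality just proved. Hence $E(X^{m_{k+1}}) \le E(X^{m_k})$.

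For $m_{k+1}\ge m_k$ I would argue by cases on whether the old maximizer survives the window shift. If $m_k \ge [k+1-M]^+$, then $m_k$ still lies in the window defining $m_{k+1}$, so $E(X^{m_{k+1}}) \ge E(X^{m_k})$; combined with the non-increase this forces equality, so $m_k$ is again a maximizer over the new window, and the tie-breaking convention for $\argmax$ (take the largest maximizing index) gives $m_{k+1}\ge m_k$. If instead $m_k < [k+1-M]^+$, then necessarily $k\ge M$ and $m_k = k-M$, while every index in the new window is at least $[k+1-M]^+ = k+1-M > k-M = m_k$, so $m_{k+1} > m_k$ immediately.

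The routine-but-delicate part is the bookkeeping of the level-set windows near $k<M$, where $[k-M]^+=0$, together with fixing a tie-breaking rule for $\argmax$ so that the index statement $m_{k+1}\ge m_k$ is unambiguous. The genuine content, however, is the uniform bound $E(X^{k+1})\le E(X^{m_k})$, and in particular verifying that the \emph{restart} branch (not only the accepting branch) respects it, since it is precisely this branch that prevents energy oscillation in $\{E(X^k)\}$. Everything downstream then follows from that one inequality together with the elementary fact that shrinking a window from the left cannot increase its maximum.
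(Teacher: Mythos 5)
Your proof is correct and follows essentially the same route as the paper's: both arguments hinge on the uniform one-step bound $E(X^{k+1})\le E(X^{m_k})$, obtained by checking the accepting branch via \eqref{restartcri} and the restart branch via $X^{k+1}=X^k$, and then compare the shifted windows defining $m_k$ and $m_{k+1}$. The only difference is that you spell out the $m_{k+1}\ge m_k$ claim and the $\argmax$ tie-breaking explicitly, whereas the paper dismisses that part as immediate from the definition.
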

\begin{proof}
	By the definition of $m_k$, it is easy to know $m_{k+1}\geq m_k$.
	If the non-restart condition \eqref{restartcri} does not hold, we know $X^{k+1} = X^k$ which implies
	\begin{equation}\label{eq1}
	E(X^{m_k})-E(X^{k+1})\geq 0.
	\end{equation}
	If the non-restart condition \eqref{restartcri} holds, we have 
	\begin{equation}\label{eq2}
	E(X^{m_k})- E(X^{k+1})\geq\sigma\|X^k-X^{k+1}\|^2\geq0.
	\end{equation}
	Combing \eqref{eq1} with \eqref{eq2}, we obtain
	\begin{align*}
	E(X^{m_{k+1}})&=\max\{E(X^j)|[k+1-M]^*\leq j\leq k+1)\}\\
	&\leq  \max\{E(X^j)|[k-M]^*\leq j\leq k)\} = E(X^{m_k}).
	\end{align*}
\end{proof}
\begin{remark}
	Lemma \ref{lemma:de} implies that the line search approach ensures a general
	energy dissipation property associating with the constant $M\geq0$. For $M
	= 0$, the energy sequence $\{E(X^k)\}$ is monotone decreasing. For $M > 0$, 
	the Algorithm \ref{alg:BadaAPG} can find a controllable 
	subsequence with energy dissipation.
\end{remark}

\subsection{Convergence analysis}
In this subsection, we give a rigorous proof of energy and sequence convergence for Algorithm \ref{alg:BadaAPG}.
Since $ m_0 = 0 $, the sequence $ \{X^k\} $ generated by Algorithm \ref{alg:BadaAPG}
is contained in the sub-level set $ [E\leq E(X^0)] $. From the Assumption
\ref{assum1}, we know $ [E\leq E(X^0)] $ is compact. Together with Lemma
\ref{lem:welldefine}, we have $ \{X^k\} \subseteq [E\leq E(X^0)]\cap
\prod_{j=1}^s\calS_j \subseteq \calB(X^0)$ where $ \calB(X^0) $ is the closed
ball that contains $ [E\leq E(X^0)]\cap\prod_{j=1}^s\calS_j $. The next lemma
establishes that $E(X)$ is Lipschitz continuous on $\calB(X^0)$.

\begin{lemma}\label{lem:LipContinu}
	Suppose Assumption \ref{assum1} and Assumption \ref{assum2} hold. Then, there exists $ L_E >0$ such that $ E(X) $ is $ L_E $-Lipschitz continuous on $ \calB(X^0) $ for all $ X^0 $.
\end{lemma}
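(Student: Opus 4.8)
The plan is to write $E = f + \sum_{j=1}^s g_j\circ\pi_j$ and prove that each summand is Lipschitz on the ball $\calB(X^0)$, then add the constants. Two structural facts make this decomposition work: $\calB(X^0)$ is a closed ball in the finite-dimensional space $\bbC^N$, hence compact and convex by Heine--Borel; and each coordinate projection $\pi_j$ is $1$-Lipschitz, so a Lipschitz bound for $g_j$ on the projected set $\pi_j(\calB(X^0))$ transfers immediately to $g_j\circ\pi_j$ on $\calB(X^0)$. I fix $X^0$ throughout, so the constant produced is allowed to depend on the ball.

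For the smooth term I would argue as follows. By Assumption \ref{assum1}(1), $f\in C^1(\bbC^N)$, so $\nabla f$ is continuous and therefore bounded on the compact set $\calB(X^0)$; set $L_f := \max_{X\in\calB(X^0)}\|\nabla f(X)\|$. Since $\calB(X^0)$ is convex, for any $X,Y\in\calB(X^0)$ the segment $[X,Y]$ stays inside, and integrating $\nabla f$ along it yields $|f(X)-f(Y)|\le L_f\|X-Y\|$. Hence $f$ is $L_f$-Lipschitz on $\calB(X^0)$.

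For the convex terms, Assumption \ref{assum1}(5) guarantees $\pi_j(\calB(X^0))\subseteq\ridom g_j$, and this set is compact as the continuous image of a compact set. Since each $g_j$ is proper convex (Assumption \ref{assum1}(2)), I would invoke the standard fact that such a function is Lipschitz continuous on any compact subset of the relative interior of its domain, producing a constant $L_{g_j}$ with $|g_j(\bu)-g_j(\bz)|\le L_{g_j}\|\bu-\bz\|$ on $\pi_j(\calB(X^0))$; composing with the $1$-Lipschitz $\pi_j$ shows $g_j\circ\pi_j$ is $L_{g_j}$-Lipschitz on $\calB(X^0)$. Taking $L_E := L_f + \sum_{j=1}^s L_{g_j}$ and applying the triangle inequality then finishes the proof.

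The main obstacle is this convex-function Lipschitz estimate, specifically upgrading the automatic \emph{local} Lipschitz continuity of a convex function on $\ridom g_j$ to a \emph{uniform} bound over the entire compact set $\pi_j(\calB(X^0))$. I would handle this by restricting $g_j$ to the affine hull $\affdom g_j$, so that $\ridom g_j$ becomes an ordinary interior and the classical interior Lipschitz estimate applies; covering the compact set by finitely many closed balls contained in this interior and taking the largest of the local constants yields the single constant $L_{g_j}$. It is precisely Assumption \ref{assum1}(5) that keeps $\pi_j(\calB(X^0))$ bounded away from the boundary of $\dom g_j$, which is what makes this finite covering argument possible.
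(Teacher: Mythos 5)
Your argument is correct and follows essentially the same route as the paper: decompose $E$ into the smooth part $f$ (Lipschitz on the compact ball since $\nabla f$ is continuous) and the convex parts $g_j$ (Lipschitz on the compact set $\pi_j(\calB(X^0))\subseteq\ridom g_j$ via Assumption \ref{assum1}(5)), then sum the constants. The only difference is that the paper simply cites Corollary 8.41 of Bauschke--Combettes for the convex-function Lipschitz estimate, whereas you sketch its proof via a covering argument; both are fine.
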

\begin{proof}
	Since $ \calB(X^0) $ is a compact subset of $ \bbC^N $, then $ \pi_j(\calB(X^0))\subseteq \bbC^{N_j} $ is closed. It's easy to know that $ \pi_j(\calB(X^0)) $ is bounded by the  fact that $ \|\bx_j\| \leq \|X\|  $. 
	Thus,  $ g_j $ is $ L_g^j $-Lipschitz continuous on $  \pi_j(\calB(X^0))
	\subseteq \ridom g_j$ (Corollary 8.41 in \cite{bauschke2011convex}). As a result, we have
	\begin{equation*}
	\begin{split}
	\left|\sum_{j= 1}^sg_j(\bx_j) -\sum_{j= 1}^sg_j(\by_j) \right| &\leq \sum_{j=1}^s |g_j(\bx_j) - g_j(\by_j)|	\leq \sum_{j=1}^s L_g^j\|\bx_j - \by_j\|\\
	&\leq s\left(\max_{j=1,2,\cdots s}L_g^j\right)\|X - Y\|,~\forall X,Y\in\calB(X^0).
	\end{split}	
	\end{equation*}
	Together with the fact that $ F\in C^1(\bbC^N) $, we conclude that there
	exists $ L_E >0$ such that $ E = F + \sum_{j=1}^sg_j $ is $ L_E $-Lipschitz continuous on the compact set $\calB(X^0)  $.
\end{proof}

\begin{lemma} \label{lem:lim_E}
	Suppose Assumption \ref{assum1} and Assumption \ref{assum2} hold. Let  $ \{X^k\}
	$ be the sequence generated by Algorithm~\ref{alg:BadaAPG}, there exists $ E^*>-\infty $ such that
	\begin{align}
	\lim\limits_{k\rightarrow\infty}\|X^{k+1} - X^k\| = 0,\quad \lim\limits_{k\rightarrow\infty} E(X^k) = E^*.
	\end{align}
\end{lemma}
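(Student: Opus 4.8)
The plan is to establish the two limits separately, deriving $\lim_k\|X^{k+1}-X^k\|=0$ first and then using it, together with the Lipschitz continuity from Lemma~\ref{lem:LipContinu}, to pin down convergence of the full energy sequence. First I would exploit the generalized descent property from Lemma~\ref{lemma:de}: the subsequence $\{E(X^{m_k})\}$ is non-increasing, and since $E$ is bounded below by Assumption~\ref{assum1}, it converges to some finite limit $E^*>-\infty$. The key telescoping ingredient is the line-search guarantee~\eqref{linesearch} together with the restart mechanism: whenever a genuine (non-restart) step is taken at block $i=b_k$, the inequality \eqref{restartcri} gives $E(X^{m_k})-E(X^{k+1})\geq\sigma\|\bx_i^k-\bz^k\|^2=\sigma\|X^{k+1}-X^k\|^2$, while a restart step forces $X^{k+1}=X^k$ so the same bound holds trivially with the right-hand side equal to zero. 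Thus in either case $E(X^{m_k})-E(X^{k+1})\geq\sigma\|X^{k+1}-X^k\|^2$.

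Next I would sum this inequality. Since $E(X^{k+1})\geq E(X^{m_{k+1}})$ by the definition of $m_{k+1}$ as an argmax over a window containing $k+1$, I obtain $E(X^{m_k})-E(X^{m_{k+1}})\geq\sigma\|X^{k+1}-X^k\|^2$ is \emph{not} immediate term-by-term; instead I would telescope over the non-increasing sequence $\{E(X^{m_k})\}$ directly. Summing $E(X^{m_k})-E(X^{k+1})\geq\sigma\|X^{k+1}-X^k\|^2$ and using that $\{E(X^{m_k})\}$ is bounded and monotone (hence the partial sums of its increments are controlled), the boundedness of $E$ below yields $\sum_{k=0}^{\infty}\|X^{k+1}-X^k\|^2<\infty$, which forces $\lim_k\|X^{k+1}-X^k\|=0$. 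The careful bookkeeping here is to relate $E(X^{m_k})-E(X^{k+1})$ back to consecutive differences of the monotone sequence $\{E(X^{m_k})\}$ across a window of width at most $M$; this is where the constant $M$ enters and where I would lean on Lemma~\ref{lemma:de} to control the indices $m_k$.

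Finally, to upgrade $\lim_k E(X^{m_k})=E^*$ to $\lim_k E(X^k)=E^*$, I would use that $\|X^{k+1}-X^k\|\to0$ together with $|m_k-k|\leq M$. Since $m_k$ and $k$ differ by at most $M$ iterations, $\|X^{m_k}-X^k\|\leq\sum_{\ell}\|X^{\ell+1}-X^\ell\|$ over at most $M$ consecutive indices, and this tends to $0$. Applying the $L_E$-Lipschitz continuity of $E$ on $\calB(X^0)$ from Lemma~\ref{lem:LipContinu} gives $|E(X^k)-E(X^{m_k})|\leq L_E\|X^k-X^{m_k}\|\to0$, so $E(X^k)\to E^*$ as well.

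I expect the main obstacle to be the summability step in the second paragraph: the quantity controlled by the restart/line-search inequality is $E(X^{m_k})$, not $E(X^k)$, so one cannot telescope $E(X^k)-E(X^{k+1})$ naively. The trick is to show that the windowed maxima $\{E(X^{m_k})\}$ decrease by at least a multiple of $\|X^{k+1}-X^k\|^2$ often enough—specifically, that over each block of $M+1$ iterations the cumulative decrease dominates the squared step lengths—so that the telescoping of the bounded monotone sequence $\{E(X^{m_k})\}$ still yields a finite sum of $\|X^{k+1}-X^k\|^2$. Handling the indices $m_k$ and the width-$M$ window correctly, rather than any single inequality, is the delicate part.
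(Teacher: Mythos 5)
Your first and third paragraphs are sound: the monotone convergence of $\{E(X^{m_k})\}$ to some $E^*>-\infty$, the unified inequality $E(X^{m_k})-E(X^{k+1})\geq\sigma\|X^{k+1}-X^k\|^2$ covering both the restart and non-restart branches, and the final Lipschitz transfer $|E(X^k)-E(X^{m_k})|\leq L_E\|X^k-X^{m_k}\|\to 0$ (valid once $\|X^{k+1}-X^k\|\to 0$ is in hand, since $0\leq k-m_k\leq M$) all match what is needed. The gap is in your second paragraph. Writing $d^k:=X^{k+1}-X^k$, the quantity $\sum_k\bigl(E(X^{m_k})-E(X^{k+1})\bigr)$ does not telescope and need not be finite, so you cannot conclude $\sum_k\|d^k\|^2<\infty$. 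Indeed, since $E(X^{k+1})\leq E(X^{m_{k+1}})$, each summand satisfies $E(X^{m_k})-E(X^{k+1})\geq E(X^{m_k})-E(X^{m_{k+1}})$, which is the wrong direction: the increments of the monotone sequence only bound your summands from below. A concrete obstruction with $M=1$: if $E(X^{2j})=E^*+1/j$ and $E(X^{2j+1})=E^*$, then $\{E(X^{m_k})\}$ is non-increasing and convergent, yet $E(X^{m_{2j}})-E(X^{2j+1})=1/j$ is not summable, so the descent inequality permits $\|d^{2j}\|^2$ of order $1/j$ and square-summability of the steps simply does not follow. This is precisely why non-monotone line-search analyses (Lu--Xiao, and this paper) prove only $\|d^k\|\to 0$, never summability.

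The correct route, which your sketch gestures at but does not supply, is an induction on the offset $j$: substituting $k\mapsto m_k-1$ in the descent inequality gives $\sigma\|d^{m_k-1}\|^2\leq E(X^{m_{m_k-1}})-E(X^{m_k})\leq E(X^{m_{k-M-1}})-E(X^{m_k})\to 0$, using $m_k\geq k-M$ and the monotonicity of $k\mapsto m_k$ from Lemma~\ref{lemma:de}; the Lipschitz continuity of $E$ on $\calB(X^0)$ then upgrades this to $E(X^{m_k-1})\to E^*$ and propagates the claim from $j$ to $j+1$, and the covering $\{X^k\}_{k\geq M+1}\subseteq\bigcup_{j=0}^{M+1}\{X^{m_k-j}\}_k$ (valid because $m_{k+1}-m_k\leq M+1$) yields $E(X^k)\to E^*$ for the full sequence. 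Only after that does $\sigma\|d^k\|^2\leq E(X^{m_k})-E(X^{k+1})\to E^*-E^*=0$ deliver $\|d^k\|\to 0$. Note the logical order is the reverse of yours: convergence of $E(X^k)$ must come first, because bounding $\|d^k\|$ requires a lower bound on $E(X^{k+1})$ near $E^*$, which is exactly what the induction provides and what your argument is missing.
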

\begin{proof} 
	We  show the proof similar to the framework in\,\cite{lu2013randomized}. 	
	Since $\{E(X^{m_k})\}$ is non-increasing from Lemma~\ref{lemma:de} and $E(X)$ is bounded below, 
	there exists $ E^* > -\infty $ such that $ 	\lim\limits_{k\rightarrow\infty}E(X^{m_k}) = E^* $.
	Define $ d^k =X^{k+1} - X^k $ and combine \eqref{eq1} with \eqref{eq2}, we have 
	\begin{align}\label{nonmonotonelinesearch}
	E(X^{m_k}) - E(X^{k+1}) \geq \sigma \|d^k\|.
	\end{align}
	Assume $k\geq M$, we prove that the following relations hold for any finite $ j\geq 1 $ by induction
	\begin{align}\label{eq:lim_d}
	\lim\limits_{k\rightarrow\infty}\|d^{m_k-j}\| =0,\quad \lim\limits_{k\rightarrow\infty} E(X^{m_k - j}) = E^*.
	\end{align}
	Substituting $ k $ by $ m_k-1 $ in \eqref{nonmonotonelinesearch}, we obtain 
	\begin{align*}
	\sigma\|d^{m_k-1}\|^2&\leq E(X^{m_{m_k-1}}) -E(X^{m_k})\leq E(X^{m_{k-M-1}}) -E(X^{m_k}),
	\end{align*}
	where the last inequality holds since $ m_k\geq k-M $. Let $k\to\infty$, we get
	\begin{align*}
	\lim\limits_{k\rightarrow\infty}\|d^{m_k-1}\| = 0.
	\end{align*}		
	Since $ E $ is $ L_E $-Lipschitz continuous on $\calB(X^0)  $ and  $ \{X^k\} \subseteq \calB(X^0)$, we have
	\begin{equation*}
	|E(X^{m_k-1}) - E(X^{m_k})|\leq L_E\|X^{m_k-1} - X^{m_k}\| = L_E\|d^{m_k-1}\|\to 0~(k\to\infty).
	\end{equation*}
	Thus, one has $ \lim\limits_{k\rightarrow\infty}E(X^{m_k-1}) =  \lim\limits_{k\rightarrow\infty}E(X^{m_k}) = E^*$, which implies that  \eqref{eq:lim_d} holds for $ j = 1 $. Suppose now that \eqref{eq:lim_d} holds for some $ j> 1 $, we show that it also holds for $ j+1 $. Substituting $ k $ by $ m_k-j-1 $ in \eqref{nonmonotonelinesearch}, it gives
	\begin{align*}
	\begin{split}
	\sigma\|d^{m_k-j-1}\|^2&\leq E(X^{m_{m_k-j-1}}) -E(X^{m_k-j})\leq E(X^{m_{k-M-j-1}}) -E(X^{m_k-j}).
	\end{split}
	\end{align*}
	Together with \eqref{eq:lim_d}, it means that 
	\begin{align*}
	\lim\limits_{k\rightarrow\infty} \|d^{m_k-j-1}\|=  0.
	\end{align*}
	Similarly, we have
	\begin{align*}
	\lim\limits_{k\rightarrow\infty}E(X^{m_k-j-1}) = \lim\limits_{k\rightarrow\infty}E(X^{m_k-j} - d^{m_k-j}) = \lim\limits_{k\rightarrow\infty}E(X^{m_k-j}) = E^*.
	\end{align*}
	Then \eqref{eq:lim_d} is also holds for $ j+1 $. By induction, we prove that relations \eqref{eq:lim_d} hold for any finite $ j\geq 1 $.
	
	From the definition of $ m_k $, we have the fact that
	\begin{align}\label{subcontain}
	m_{k+1} - m_k \leq M + 1,\quad \forall k\geq 0,
	\end{align}
	Thus, we obtain	$ \{X^k\}_{k=M+1}^\infty\subset \cup_{j=0}^{M+1}\{X^{m_k - j}\}_{k=M+1}^\infty $.
	Using \eqref{eq:lim_d}, it follows that $ \lim\limits_{k\rightarrow\infty}E(X^{m_k-j}) = E^*, \forall j\in[0,M]$. Then,  we have 
	\begin{align*}
	\lim\limits_{k\rightarrow\infty}E(X^{k}) = E^*.
	\end{align*}
	Moreover, we know
	\begin{align*}
	\lim\limits_{k\rightarrow\infty}\|d^k \|^2 \leq\dfrac{1}{\sigma} \lim\limits_{k\rightarrow\infty} \left(E(X^{m_k}) - E(X^k)\right)\rightarrow  0  \quad (k\to\infty).
	\end{align*}
	Then, we obtain $ \lim\limits_{k\rightarrow\infty}\|d^k \| = 0 $.
\end{proof}
\begin{remark}
	If $ M =0 $, Lemma \ref{lem:lim_E} can be easily obtained since $ m_k\equiv  k $ and we do
	not require to assume $ \pi_j(\calB(X^0))\subseteq \ridom g_i $ in Assumption \ref{assum1}.
\end{remark}

\begin{lemma}\label{lem:grad2}
	Suppose Assumption \ref{assum1}, Assumption \ref{assum2} and Assumption
	\ref{assum3} hold. Let $ \{X^k\} $
	be the sequence generated by Algorithm \ref{alg:BadaAPG}. Then, there exists a positive constant $ C$ such that
	\begin{align}\label{boundgrad}
	\dist(\vzero, \partial E(X^k))\leq  C\sum_{l=k-3T+1}^k\|X^{k} - X^{k-1}\|,\quad \forall k>3T,
	\end{align}
	where $ \dist(\vzero, \partial E(X^k)):=\inf \{\|y\|:y\in\partial E(X^k)\} $.
\end{lemma}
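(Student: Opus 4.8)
The plan is to exhibit, for the point $X^k$, an explicit element of $\partial E(X^k)$ whose norm is controlled by recent successive iterate differences. Since $f$ is smooth and the nonsmooth part is block separable, one has $\partial E(X^k)=\{(\nabla_j f(X^k)+\xi_j)_{j=1}^s:\xi_j\in\partial g_j(\bx_j^k)+N_{\calS_j}(\bx_j^k)\}$ (the convex subdifferential sum rule applying by Assumption~\ref{assum1}). Hence it suffices to produce, for each block $j$, one admissible $\xi_j$ and bound $\|\nabla_j f(X^k)+\xi_j\|$, after which $\dist(\vzero,\partial E(X^k))\le\sum_{j=1}^s\|\nabla_j f(X^k)+\xi_j\|$.

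For each block $j$, let $t_j\le k-1$ be the most recent iteration at which block $j$ was (accepted-)updated, so that $\bx_j^{t_j+1}=\bz^{t_j}$; because $\bx_j$ is frozen afterwards (restart steps and updates of other blocks leave it untouched), $\bx_j^{t_j+1}=\bx_j^k$. Writing the first-order optimality condition for the subproblem \eqref{eq:updateBPG} defining $\bz^{t_j}$ produces the admissible component
\[
\xi_j:=-\nabla_j f(\by^{t_j},\bx_{\neq j}^{t_j})-\tfrac{1}{\alpha_{t_j}}\bigl(\nabla h_j(\bx_j^{t_j+1})-\nabla h_j(\by^{t_j})\bigr)\in\partial g_j(\bx_j^k)+N_{\calS_j}(\bx_j^k).
\]
I would then split, using $\alpha_{t_j}\ge\alpha_{\min}$,
\[
\|\nabla_j f(X^k)+\xi_j\|\le\|\nabla_j f(X^k)-\nabla_j f(X^{t_j+1})\|+\|\nabla_j f(X^{t_j+1})-\nabla_j f(\by^{t_j},\bx_{\neq j}^{t_j})\|+\tfrac{1}{\alpha_{\min}}\|\nabla h_j(\bx_j^{t_j+1})-\nabla h_j(\by^{t_j})\|.
\]

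Each term is estimated by local Lipschitz continuity of the gradients on the compact ball $\calB(X^0)$ that contains the whole sequence (compactness is provided as in Lemma~\ref{lem:LipContinu}; here I use that $\nabla f$ and $\nabla h_j$ are Lipschitz on $\calB(X^0)$, valid for the polynomial $F$ and the chosen Bregman kernels). The first term is $\le L_f\|X^k-X^{t_j+1}\|\le L_f\sum_{l=t_j+2}^k\|X^l-X^{l-1}\|$ by telescoping. Since $X^{t_j+1}$ and $(\by^{t_j},\bx_{\neq j}^{t_j})$ differ only in the $j$-th block, the second and third terms are both controlled by $\|\bx_j^{t_j+1}-\by^{t_j}\|\le\|X^{t_j+1}-X^{t_j}\|+\bar w\,\|\bx_j^{t_j}-\tbx_j^{n_j^{t_j}-1}\|$, and the extrapolation gap $\|\bx_j^{t_j}-\tbx_j^{n_j^{t_j}-1}\|$ equals a single consecutive difference $\|X^{a+1}-X^a\|$, where $a$ is the iteration of the previous update of block $j$ (only block $j$ changed at $a$).

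The final step is the index bookkeeping, which I expect to be the main obstacle. By Assumption~\ref{assum3} each block is selected within any $T$ consecutive iterations, so $t_j\ge k-T$ and the previous update index satisfies $a\ge t_j-T\ge k-2T$; thus every difference appearing above is indexed in $\{k-2T,\dots,k-1\}\subseteq\{k-3T+1,\dots,k\}$, the $3T$ window giving ample slack. Collecting the constants $L_f$, $L_{h_j}/\alpha_{\min}$ and $\bar w$ into a single $C$ and summing over the $s$ blocks yields \eqref{boundgrad}, where the summand is read as $\|X^{l}-X^{l-1}\|$. The two delicate points are (i) justifying that the last \emph{genuine} update of each block lies within the window despite possible restarts, for which one combines Assumption~\ref{assum3} with the fact that restart iterations satisfy $X^{l}=X^{l-1}$ and therefore contribute nothing to the sum while leaving the block unchanged; and (ii) securing the gradient-Lipschitz estimates on $\calB(X^0)$, which rest on the regularity of $F$ and of the distance generators rather than on the bare $C^1$ hypothesis.
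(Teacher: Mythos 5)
Your proof follows essentially the same route as the paper's: both extract a subgradient element from the first-order optimality condition of each block's most recent proximal subproblem, bound its norm via local Lipschitz constants of $\nabla f$ and $\nabla h_j$ on the compact ball $\calB(X^0)$, and use Assumption~\ref{assum3} to confine the telescoped consecutive differences and the extrapolation gap to the window of length $3T$. The only cosmetic differences are your explicit inclusion of the normal cone of $\calS_j$ and your three-term splitting of the gradient error (the paper uses two terms and the case split $i=b_k$ versus $i\neq b_k$); the delicate point you flag about rejected (restart) selections is present in the paper's proof as well, which likewise only treats a restart occurring at iteration $k$ itself.
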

\begin{proof}
	If $X^k=X^{k-1}$, we only need to consider \eqref{boundgrad} holds at $
	X^{k-1} $ and it is easy to know $X^{k-2}\neq X^{k-1}$ from the monotonicity when $w_{k-1}=0$ as shown in Lemma~\ref{lem:Suffdescent}. Thus, we only consider the case $X^{k}\neq X^{k-1}$. 
	
	It is noted that $\partial E(X) = \nabla f(X) + U$ where
	$U=(\bu_1,\ldots,\bu_s)$, $ \bu_i\in\partial g_i(X)$, $1\leq i\leq s$.
	We first assume that the non-restart
	condition \eqref{restartcri} is satisfied at the $k$-th iteration.
	For each $ i\in\{1,2,\cdots, s\} $, we denote $ l_i^k$ as the last iteration at which the update of $ i $-th block is achieved within the first $ k $-th iteration, i.e., $l_i^k = \argmax\{\ell|b_{\ell} = i,\ell\leq k\}$. 	
	Note that $ l_{b_k}^k = k $ and $ \bx_i^{l_i^k} = \tbx_i^{n_i^k} $. 
	By the optimal condition of the proximal subproblem  \eqref{Bprox}, we have
	\begin{align}\label{eq:suboptimality}
	\vzero \in \partial g_i(\bx^{l_i^k}) + \nabla_i f(\by^{l_i^k-1},\bx_{\neq i}^{l_i^k-1}) + \dfrac{1}{\alpha_{l_i^k-1}}\left(\nabla h_i(\bx^{l_i^k}_i) - \nabla h_i(\by^{l_i^k-1})\right).
	\end{align}
	Since that $ \calB(X^0) $ is compact, we let  $ \rho_h := \max\limits_{j=1,2,\cdots
		s}(\max_{X\in\calB(X^0)} \|\nabla^2 h_j(X)\| )$ and  $ \rho_f $ be the local
	Lipschitz constant of $ \nabla f $ on  $ \calB(X^0) $. Due to $ \{X^k\}\subseteq \calB(X^0) $, we know
	\begin{align}\label{eq:ExtraStep}
	\by^{l_i^k-1}= \tbx_i^{n_i^k-1} + w_{l_i^k-1}(\tbx_i^{n_i^k-1} -\tbx_i^{n_i^k-2} )\in\calB_0(X^0).
	\end{align}
	Together with \eqref{eq:suboptimality} and \eqref{eq:ExtraStep}, we get
	\begin{equation}\label{bound3}
	\begin{aligned}
	&~\inf_{\bu_i\in  \partial g_i(\bx_i^k)}\|\nabla_i f(X^k) +\bu_i\|\\  
	\leq&~ \|\nabla_i f(X^k) - \nabla_i f(\by^{l_i^k-1}, \bx_{\neq i}^{l_i^k-1}) - \dfrac{1}{\alpha_{l_i^k-1}}\left(\nabla h_i(\bx_i^{l_i^k}) - \nabla h_i(\by^{l_i^k-1})\right)\|\\
	\leq&~ \|\nabla_i f(X^k) - \nabla_i f(\by^{l_i^k-1}, \bx_{\neq i}^{l_i^k-1})\| + \dfrac{1}{\alpha_{l_i^k-1}}\|\nabla h_i(\bx_i^{l_i^k}) - \nabla h_i(\by^{l_i^k-1})\|  \\
	\leq&~ \rho_f \|X^k - (\by^{l_i^k - 1}, \bx_{\neq i}^{l_i^k - 1})\| + \dfrac{\rho_h}{\alpha_{\min}}\|\bx_i^{l_i^k} - \by_i^{l_i^k - 1}\|.
	\end{aligned}
	\end{equation}
	If $ i = b_k $, it follows that $ l_i^k = k  $. Then, we have 
	\begin{equation}\label{bound1}
	\|X^k - (\by^{l_i^k - 1}, \bx_{\neq i}^{l_i^k - 1})\| = \|X^{l_i^k} - (\by^{l_i^k - 1}, \bx_{\neq i}^{l_i^k - 1})\| = \|\bx_i^{l_i^k} - \by_{i}^{l_i^k - 1}\|,\quad i = b_k.
	\end{equation}
	If $ i \neq b_k $, we know
	\begin{equation}\label{bound2}
	\begin{aligned}
	\|X^k - (\by^{l_i^k - 1}, \bx_{\neq i}^{l_i^k - 1})\|
	\leq&~   \sum_{l = l_i^k+1 }^k\|X^l - X^{l- 1}\|  + \|X^{l_i^k} - (\by^{l_i^k-1}, \bx_{\neq i}^{l_i^k-1})\|\\
	=&~   \sum_{l = l_i^k+1 }^k\|X^l - X^{l- 1}\|  +\|\bx_i^{l_i^k} - \by_i^{l_i^k-1}\|.
	\end{aligned}
	\end{equation}
	Combining \eqref{bound3}, \eqref{bound1} and \eqref{bound2}, we have
	\begin{align*}
	\inf_{\bu_i\in  \partial g_i(\bx_i^k)}\|\nabla_i f(X^k) +\bu_i\|\leq
	\begin{cases}
	\rho_1\|\bx_i^{l^k_i} - \by_i^{l_i^k-1}\|,& i = b_k,\\
	\rho_f\sum_{l = l_i^k+1 }^k\|X^l - X^{l- 1}\| + \rho_1\|\bx_i^{l^k_i} - \by_i^{l_i^k-1}\|,& i \neq  b_k,
	\end{cases}
	\end{align*}
	where $ \rho_1 = \rho_f + \rho_h/\alpha_{\min} $. Moreover, for each $ i $, we
	know that $ k-l_i^k\leq T $ by the Assumption \ref{assum3} and there exist $
	k_i^1,k_i^2\in[k-3T, k] $ such that $ 	\bx_i^{k_i^1} = \tbx_i^{n_i^k-1},
	\bx_i^{k_i^2} = \tbx_i^{n_i^k-2} $. Thus,  we have $\|\bx_i^{l_i^k} -
	\by_i^{l_i^k-1}\|\leq\|\tbx_i^{n^k_i} - \tbx_i^{n^k_i-1}\| +
	\bar{w}\|\tbx_i^{n^k_i-1} - \tbx_i^{n_i^k-2}\|$ and
	\begin{align*}
	&~ \dist(\vzero,\partial E(X^k)) 
	= \inf_{U\in\partial g_i}\left(\sum_{i=1}^s \|\nabla_i F(X^k) +\bu_i\|^2\right)^{1/2}
	\leq  \sum_{i=1}^s \inf_{\bu_i\in\partial g_i} \|\nabla_i F(X^k) +\bu_i\|\\
	\leq&~ \rho_f \sum_{i\neq b_k} \sum_{l = l_i^k+1 }^k\|X^l - X^{l- 1}\|+\rho_1\sum_{i=1}^s\|\bx_i^{l^k_i} - \by_i^{l_i^k-1}\|\\
	\leq&~ \rho_f (s-1)\sum_{l = k-3T+1 }^k\|X^l - X^{l- 1}\| +\rho_1\sum_{i=1}^s\left(\|\tbx_i^{n^k_i} - \tbx_i^{n^k_i-1}\| + \bar{w}\|\tbx_i^{n^k_i-1} - \tbx_i^{n_i^k-2}\|\right)\\
	\leq&~  \rho_f (s-1)\sum_{l = k-3T+1 }^k\|X^l - X^{l- 1}\|+(1+\bar{w})\rho_1\sum_{l = k-3T+1 }^k\|X^l - X^{l- 1}\|.
	\end{align*}
	Define $ C :=  (s-1)\rho_f + (1+\bar{w})\rho_1 $, we obtain \eqref{boundgrad}.
\end{proof}

Together with Lemma \ref{lem:lim_E} and Lemma \ref{lem:grad2}, we immediately have the following sub-sequence convergent property.
\begin{thm}\label{thm:subseqConvergence}
	Suppose Assumption \ref{assum1}, Assumption \ref{assum2} and Assumption
	\ref{assum3} hold. Let $ \{X^k\}  $ be the sequence generated by Algorithm \ref{alg:BadaAPG}, then any limit point $ X^* $ of $ \{X^k\} $ is a critical point of $ E $, i.e., $ \vzero \in\partial E(X^*) $.
\end{thm}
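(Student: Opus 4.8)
The plan is to exploit the two preceding lemmas: Lemma~\ref{lem:lim_E} gives $\lim_{k\to\infty}\|X^{k+1}-X^k\|=0$, while Lemma~\ref{lem:grad2} controls $\dist(\vzero,\partial E(X^k))$ by a fixed-length window of consecutive increments. Fix a limit point $X^*$ and choose a subsequence $\{X^{k_j}\}$ with $X^{k_j}\to X^*$. I will first show $\dist(\vzero,\partial E(X^{k_j}))\to 0$, then pass to the limit using the closedness of the subdifferential to conclude $\vzero\in\partial E(X^*)$.

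First I would establish $\dist(\vzero,\partial E(X^{k_j}))\to 0$. By Lemma~\ref{lem:grad2}, for $k_j>3T$ one has $\dist(\vzero,\partial E(X^{k_j}))\le C\sum_{l=k_j-3T+1}^{k_j}\|X^l-X^{l-1}\|$. The sum ranges over a fixed number $3T$ of indices, all tending to infinity with $j$; since Lemma~\ref{lem:lim_E} guarantees $\|X^l-X^{l-1}\|\to 0$ as $l\to\infty$, every summand vanishes in the limit and hence the right-hand side tends to $0$. Because $\bx_i^{k}\in\ridom g_i$ by Assumption~\ref{assum1}, the set $\partial E(X^{k_j})=\nabla f(X^{k_j})+\prod_{i=1}^s\partial g_i(\bx_i^{k_j})$ is nonempty, closed and convex, so the infimum defining $\dist$ is attained at some $\xi^{k_j}\in\partial E(X^{k_j})$, giving $\|\xi^{k_j}\|\to 0$.

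The decisive step is the limit passage. I would write $\xi^{k_j}=\nabla f(X^{k_j})+U^{k_j}$ with $U^{k_j}=(\bu_1^{k_j},\dots,\bu_s^{k_j})$ and $\bu_i^{k_j}\in\partial g_i(\bx_i^{k_j})$. Since $f\in C^1$ and $X^{k_j}\to X^*$, continuity of $\nabla f$ yields $\nabla f(X^{k_j})\to\nabla f(X^*)$, so that $U^{k_j}=\xi^{k_j}-\nabla f(X^{k_j})\to-\nabla f(X^*)$; in particular $\bu_i^{k_j}\to-\nabla_i f(X^*)$ for each block $i$. I then invoke the closedness of the graph of $\partial g_i$: for a proper, lower semicontinuous, convex $g_i$, the relations $\bx_i^{k_j}\to\bx_i^*$, $\bu_i^{k_j}\to-\nabla_i f(X^*)$ and $\bu_i^{k_j}\in\partial g_i(\bx_i^{k_j})$ force $-\nabla_i f(X^*)\in\partial g_i(\bx_i^*)$. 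Assembling the blocks gives $-\nabla f(X^*)\in\prod_{i=1}^s\partial g_i(\bx_i^*)$, i.e.\ $\vzero=\nabla f(X^*)+(-\nabla f(X^*))\in\partial E(X^*)$, as claimed.

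The routine portion is the distance bound, which is an immediate consequence of the two lemmas. The main obstacle is the limit passage, where two points must be checked: that $\partial E(X^{k_j})$ is nonempty along the subsequence (so $\dist$ is meaningful), and that each convex subdifferential $\partial g_i$ has a closed graph. The former rests on Assumption~\ref{assum1} placing the iterates in $\ridom g_i$, and the latter follows from lower semicontinuity of $g_i$ by taking $\limsup$ in the subgradient inequality $g_i(\by)\ge g_i(\bx_i^{k_j})+\la \bu_i^{k_j},\by-\bx_i^{k_j}\ra$ and using $\liminf_j g_i(\bx_i^{k_j})\ge g_i(\bx_i^*)$. As a byproduct, continuity of $E$ on $\calB(X^0)$ from Lemma~\ref{lem:LipContinu} together with $E(X^k)\to E^*$ gives $E(X^*)=E^*$, confirming that every limit point carries the same critical energy value.
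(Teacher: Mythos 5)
Your proposal is correct and follows essentially the same route as the paper: take a convergent subsequence, combine Lemma~\ref{lem:lim_E} with Lemma~\ref{lem:grad2} to force $\dist(\vzero,\partial E(X^{k_j}))\to 0$, and then pass to the limit via closedness of the subdifferential. The only (harmless) difference is that you carry out the limit passage block by block through the graph-closedness of each convex $\partial g_i$ after splitting off $\nabla f$, which is in fact slightly more careful than the paper's direct application of the subgradient inequality to the non-convex $E$.
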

\begin{proof}
	From Remark \ref{rmk:lem2}, we know $ \{X^k\}\subset [E\leq
	E(X^0)] $ and thus is bounded. Then, the set of limit points of $ \{X^k\} $ is
	nonempty. For any limit point $ X^* = (\bx_1^*,\bx_2^*,\cdots,\bx_s^*) $, there
	exists a subsequence $ \{X^{k_j}\} $ such that $ \lim\limits_{j\to\infty} X^{k_j}
	= X^* $. By Lemma \ref{lem:lim_E} and Lemma \ref{lem:grad2}, we immediately obtain
	\begin{align}\label{subgralim}
	\lim\limits_{j\to\infty}\dist(\vzero, \partial E(X^{k_j})) = 0.
	\end{align}
	Moreover, from Lemma \ref{lem:LipContinu}, we know $ E(X) $ is $ L_E $-Lipschitz smooth on $ \calB(X^0) $. Using  the fact that $ \{X^k\}\subseteq \calB(X^0) $ and $ \calB(X^0) $ is compact, we know $ X^*\in \calB(X^0) $. Thus, we get $ E(X^{k_j})\to E(X^*) $ as $ j\to\infty $.	
	For any $ u_j\in\partial E(X^{k_j}) $, we know
	\begin{equation*}
	E(X) \geq E(X^{k_j}) + \langle u_j, X - X^{k_j}\rangle,\quad \forall X\in\dom E,
	\end{equation*}
	which implies $ \{u: u = \lim\limits_{j\to\infty} u_j, u_j\in\partial E(X^{k_j}) \}\subset \partial E(X^*) $ by setting $ j\to\infty $. Together with \eqref{subgralim}, we conclude that  $ \vzero\in \partial E(X^*) $.
\end{proof}

When $ M = 0 $ and $E$ is a KL function~\cite{bolte2014proximal}, the sub-sequence convergence can be strengthen to the whole sequence convergence.
\begin{definition}[KL function\,\cite{bolte2014proximal}]
	\label{assum4}
	$ E(x) $ is the $ KL $ function if for all $ \bar{x}\in \dom \partial E  := \{x :
	\partial E(x)\neq \emptyset\}$, there exists $\eta>0$, a neighborhood $ U $ of $
	\bar x $ and $\psi\in\Psi_\eta: =\{\psi\in C[0,\eta)\cap C^1(0,\eta)$, where
	$\psi$ is concave, $\psi(0)=0$, $\psi^{'}>0$  on $(0,\eta)\}$ such that for
	all $x\in U\cap \{x:E(\bar x)<E(x)<E(\bar x)+\eta\}$, the following inequality holds,
	\begin{equation}
	\psi^{'}(E(x)-E(\bar x))\,\dist(\vzero,\partial E(x))\geq 1.
	\end{equation}
\end{definition}

\begin{thm}[Sequence convergence]\label{thm:SeqConvergence}
	Suppose Assumption \ref{assum1}, Assumption \ref{assum2}, Assumption \ref{assum3} hold. Let  $ \{X^k\}  $ be the sequence generated by Algorithm \ref{alg:BadaAPG} with $ M = 0 $. If $E$ is a KL function, then there exists some $X^*$ such that
	\begin{align}
	\lim\limits_{k\rightarrow\infty} X^k = X^*, \quad \vzero\in\partial E(X^*).
	\end{align}
\end{thm}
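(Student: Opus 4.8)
The plan is to follow the now-standard Kurdyka--Łojasiewicz (KL) argument for descent methods (as in~\cite{bolte2014proximal}), adapted to the multi-step relative-error bound of Lemma~\ref{lem:grad2}. With $M=0$ we have $m_k\equiv k$, so Lemma~\ref{lemma:de} together with~\eqref{eq2} gives the monotone sufficient-decrease estimate $E(X^k)-E(X^{k+1})\ge\sigma\|X^{k+1}-X^k\|^2$, while Lemma~\ref{lem:lim_E} supplies $E(X^k)\to E^*$ and $\|X^{k+1}-X^k\|\to0$, and Lemma~\ref{lem:grad2} gives $\dist(\vzero,\partial E(X^k))\le C\sum_{l=k-3T+1}^k\|X^l-X^{l-1}\|$. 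The goal is to upgrade the subsequence convergence of Theorem~\ref{thm:subseqConvergence} to convergence of the whole sequence by proving the finite-length property $\sum_k\|X^{k+1}-X^k\|<\infty$.

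First I would analyze the limit-point set $\omega(X^0)$: since $\{X^k\}\subseteq\calB(X^0)$ is bounded, $\omega(X^0)$ is nonempty and compact; because $\|X^{k+1}-X^k\|\to0$ it is also connected, and by Theorem~\ref{thm:subseqConvergence} every point of it is critical. Using the Lipschitz continuity of $E$ on $\calB(X^0)$ (Lemma~\ref{lem:LipContinu}) together with $E(X^k)\to E^*$, I would show $E\equiv E^*$ on $\omega(X^0)$. For the easy case, if $E(X^{k_0})=E^*$ for some $k_0$, then monotonicity and the sufficient-decrease inequality force $X^{k+1}=X^k$ for all $k\ge k_0$, so the sequence is eventually stationary and the claim is trivial; hence I may assume $E(X^k)>E^*$ for every $k$.

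Next I would invoke the uniformized KL property on the compact set $\omega(X^0)$: there exist $\varepsilon,\eta>0$ and $\psi\in\Psi_\eta$ such that $\psi'(E(x)-E^*)\,\dist(\vzero,\partial E(x))\ge1$ whenever $\dist(x,\omega(X^0))<\varepsilon$ and $E^*<E(x)<E^*+\eta$. For $k$ large, $X^k$ satisfies both conditions. Writing $\Delta_k:=\psi(E(X^k)-E^*)$ and using the concavity of $\psi$, the KL inequality, and the sufficient-decrease bound, I obtain $\Delta_k-\Delta_{k+1}\ge \sigma\|X^{k+1}-X^k\|^2/\dist(\vzero,\partial E(X^k))$; substituting the window bound of Lemma~\ref{lem:grad2} yields $\|X^{k+1}-X^k\|^2\le\frac{C}{\sigma}(\Delta_k-\Delta_{k+1})\sum_{l=k-3T+1}^k\|X^l-X^{l-1}\|$.

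The main obstacle is precisely this window sum: unlike the classical single-step relative-error condition, the right-hand side couples $3T$ consecutive increments, so a naive telescoping does not close. I would handle it by setting $a_k:=\|X^k-X^{k-1}\|$, applying $\sqrt{uv}\le\frac{1}{2\beta}u+\frac{\beta}{2}v$ with $u=\frac{C}{\sigma}(\Delta_k-\Delta_{k+1})$ and $v=\sum_{l=k-3T+1}^k a_l$, and then summing over $k\ge K$. Since each $a_l$ appears in at most $3T$ of the windows, the resulting double sum is bounded by $3T\sum_l a_l$; choosing $\beta<1/(3T)$ lets me absorb this tail into the left-hand side, while the $\Delta_k-\Delta_{k+1}$ terms telescope to the finite quantity $\Delta_K$ (using $\Delta_k\to0$). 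This establishes $\sum_k a_k<\infty$, so $\{X^k\}$ is Cauchy and converges to some $X^*$; being a limit point, $X^*$ is critical by Theorem~\ref{thm:subseqConvergence}, which completes the proof.
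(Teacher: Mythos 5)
Your proposal is correct and follows essentially the same route as the paper's Appendix~B proof: the uniformized KL property on the compact limit set, the sufficient-decrease bound \eqref{suffDec}, the window estimate of Lemma~\ref{lem:grad2}, and a weighted AM--GM step whose small coefficient on the window sum (each increment appearing in at most $3T$ windows) lets the tail be absorbed while the $\psi$-differences telescope. The only cosmetic differences are that you keep the absorption weight $\beta$ generic where the paper fixes it to $1/(3T)$, and you explicitly dispose of the degenerate case $E(X^{k_0})=E^*$, which the paper leaves implicit.
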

\begin{proof}
	The proof is in the Appendix B.
\end{proof}

In the following context, we introduce two kinds of $h$ and present corresponding
numerical algorithms for solving \eqref{finitepro}.
	
	\section{Application to the CMSH model}\label{sec:appCMSH}
As discussed above, let $ f = F $, $ g_i = G_i  $ and $ \calS_i =
\{\bhphi:e_1^\top\bhphi = 0\} $, then the problem \eqref{finitepro} reduces to
\eqref{GeneralFormulation}. In this section, we apply the Algorithm \ref{alg:BadaAPG} to solve the
finite dimensional CMSH model \eqref{finitepro}. Let $ i = b_k $, a key component of 
efficiently implementing AB-BPG method is fast solving the following constrained subproblem
\begin{align}
\label{BProxsubprob}
\min_{\bhphi\in\calS_i}~ G_i(\bhphi) + \langle \nabla_i F(\bhpsi^k, \bhphi_{\neq i}^k), \bhphi - \bhphi_i^k\rangle + \dfrac{1}{\alpha_k}D_{h_i}(\bhphi, \bhphi_i^k)
\end{align}
where  $ \bhpsi^k = (1+w_k)\bhphi_i^k - w_k\bhphi_i^{\text{prev}} $ is the extrapolation and
$ \bhphi_i^{\text{prev}} $ is the value of $ \bhphi_i $ before it is updated to $
\bhphi_i^k $. 
Solving \eqref{BProxsubprob} depends on the form of $h_i$.
In the following context, we let
\begin{equation}\label{kernel}
h_j(\bx) := h(\bx) = \dfrac{a}{4}\|\bx\|^4 + \dfrac{1}{2}\|\bx\|^2,  \quad a\geq 0
\end{equation}
for all $ j $ and propose two classes of numerical algorithms for solving
\eqref{infpro} based on different choices of $a$. For each algorithm, we will show that the subproblem \eqref{BProxsubprob} is well defined and can be solved efficiently.

{\textbf{Case I: $a = 0$.}}
In this case, $h_j(\bx)=\|\bx\|^2/2$ and the Bregman divergence of $D_h$ becomes the Euclidean distance, i.e.,\
\begin{equation}
D_h(\bx,\by) = \dfrac{1}{2}\|\bx-\by\|^2.
\end{equation}
Thus, the iteration scheme \eqref{BProxsubprob} is reduced to the accelerated block proximal gradient method\,\cite{bao2015dictionary,xu2017globally}. We can find the closed-form solution of the constrained minimization problem \eqref{BProxsubprob} by applying Lemma 4.1 in \cite{jiang2020efficient}.
\begin{lemma}\label{lem:K=2}
	Given $ \alpha_k>0 $, $ \Phi^k $ and $ \bhpsi^k \in\calS_i$, the subproblem \eqref{BProxsubprob} with $ h(\bx) = \|\bx\|^2/2 $ is well-defined and has analytical solution
	\begin{align}\label{BProx_Norm2}
	\begin{split}
	\bhphi_i^{k+1}
	&=\left(\alpha_k \calD_i+ I\right)^{-1}\left(\bhpsi^k  -\alpha_k\calP_1\nabla_i F(\bhpsi^k, \bhphi_{\neq i}^k)\right) 
	%	&= \Prox_{G_i}^{\alpha_k}\left(\bhphi_i^k  -\alpha_k\calP_1\nabla_i F(\hPhi^k)\right),
	\end{split}
	\end{align}
	where $ \calD_i $ is a $N_i$-order diagonal matrix as defined in \eqref{def:Matrix_D} and $ \calP_1 = I-e_1e_1^\top $ is
	the projection onto $ \calS_i $.
\end{lemma}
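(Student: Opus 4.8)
The plan is to exploit the fact that, with $h(\bx)=\|\bx\|^2/2$, problem \eqref{BProxsubprob} is a strongly convex quadratic minimization subject to the single linear equality $e_1^\top\bhphi=0$ that defines $\calS_i$; hence it has a unique minimizer that I can produce in closed form through one Lagrange multiplier. Well-definedness is already covered by Lemma \ref{lem:welldefine}, so the task reduces to exhibiting the minimizer. Substituting $G_i(\bhphi)=\tfrac12\langle\bhphi,\calD_i\bhphi\rangle$ and $D_h(\bhphi,\bhpsi^k)=\tfrac12\|\bhphi-\bhpsi^k\|^2$ (the proximal center being the extrapolation point $\bhpsi^k$), and abbreviating $\bm{v}:=\nabla_i F(\bhpsi^k,\bhphi_{\neq i}^k)$, the objective is quadratic in $\bhphi$ with unconstrained Hessian $\calD_i+\tfrac1{\alpha_k}I\succ0$.

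First I would form the Lagrangian $L(\bhphi,\lambda)=G_i(\bhphi)+\langle\bm{v},\bhphi-\bhpsi^k\rangle+\tfrac1{\alpha_k}D_h(\bhphi,\bhpsi^k)+\lambda\,e_1^\top\bhphi$ and set $\nabla_{\bhphi}L=\vzero$, obtaining the stationarity relation $\calD_i\bhphi+\bm{v}+\tfrac1{\alpha_k}(\bhphi-\bhpsi^k)+\lambda e_1=\vzero$. Multiplying by $\alpha_k$ and regrouping gives $(\alpha_k\calD_i+I)\bhphi=\bhpsi^k-\alpha_k\bm{v}-\alpha_k\lambda e_1$, so $\bhphi=(\alpha_k\calD_i+I)^{-1}(\bhpsi^k-\alpha_k\bm{v}-\alpha_k\lambda e_1)$. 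What remains is to determine $\lambda$ from the feasibility requirement $e_1^\top\bhphi=0$.

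The structural point I would then use is that $\calD_i$ is diagonal with nonnegative entries, so $\alpha_k\calD_i+I$ and its inverse are diagonal and positive definite, and $e_1$ is an eigenvector: $(\alpha_k\calD_i+I)^{-1}e_1=a_1 e_1$ with $a_1>0$. Left-multiplying the expression for $\bhphi$ by $e_1^\top$ and imposing $e_1^\top\bhphi=0$ yields $0=a_1\big(e_1^\top\bhpsi^k-\alpha_k e_1^\top\bm{v}-\alpha_k\lambda\big)$; since $\bhpsi^k\in\calS_i$ gives $e_1^\top\bhpsi^k=0$ and $\|e_1\|=1$, this forces $\lambda=-e_1^\top\bm{v}$. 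Back-substitution gives $-\alpha_k\bm{v}-\alpha_k\lambda e_1=-\alpha_k\big(\bm{v}-(e_1^\top\bm{v})e_1\big)=-\alpha_k(I-e_1e_1^\top)\bm{v}=-\alpha_k\calP_1\bm{v}$, which is exactly \eqref{BProx_Norm2}; strong convexity guarantees this KKT point is the global minimizer.

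I expect the only nontrivial step to be this collapse of the multiplier term into the projection $\calP_1=I-e_1e_1^\top$: it hinges on $(\alpha_k\calD_i+I)^{-1}$ being diagonal, hence preserving the $e_1$ direction, together with $e_1^\top\bhpsi^k=0$. Everything else is a routine strongly convex quadratic computation. Alternatively, one can avoid the explicit derivation by reducing the $i$-th block subproblem to the single-component setting and invoking Lemma 4.1 of \cite{jiang2020efficient}.
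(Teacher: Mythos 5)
Your derivation is correct, and it is genuinely more self-contained than what the paper does: the paper offers no proof of this lemma at all, simply asserting that the closed form follows "by applying Lemma 4.1 in \cite{jiang2020efficient}" (the single-component case), which is exactly the alternative route you mention in your last sentence. Your KKT computation is the right one: the objective is a strongly convex quadratic, the constraint is a single linear equality, stationarity of the Lagrangian gives $(\alpha_k\calD_i+I)\bhphi=\bhpsi^k-\alpha_k\nabla_iF-\alpha_k\lambda e_1$, and the two structural facts you isolate --- that $(\alpha_k\calD_i+I)^{-1}$ is diagonal and therefore maps $e_1$ to a positive multiple of itself, and that $e_1^\top\bhpsi^k=0$ because $\bhpsi^k\in\calS_i$ --- are precisely what collapse the multiplier term into $-\alpha_k\calP_1\nabla_iF$ with $\calP_1=I-e_1e_1^\top$. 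What your explicit argument buys over the paper's citation is transparency about \emph{why} the projection $\calP_1$ appears and where feasibility of the extrapolation point is used; what the citation buys is brevity and consistency with the single-component predecessor. One small point worth making explicit if you write this up: the subproblem \eqref{BProxsubprob} as printed centers the Bregman term at $\bhphi_i^k$, whereas the claimed solution \eqref{BProx_Norm2} (and the general update \eqref{eq:updateBPG}) requires the prox center to be the extrapolation point $\bhpsi^k$; you silently adopted the latter, which is the reading consistent with the stated formula, but you should flag that you are correcting an apparent typo in the subproblem rather than proving it as written.
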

From the feasibility assumption, it is noted that $ \bhpsi^k\in \calS_i $ holds as
long as the initial point $ \Phi^0\in\prod_{j=1}^s\calS_j $. The concrete algorithm
is given in Algorithm \ref{alg:ABBAPG-K-PFC} with $ K = 2 $.

{\textbf{Case II: $a>0$.}}
Since $ F(\hPhi) $ can be represented as a
$ 4^{th} $-degree polynomial function, it is known that $F$ is not block wise relatively smooth with respect to $ h(\bx) = \|\bx\|^2/2 $. In this case, we choose $ a, b>0 $.
The next lemma shows the optimal condition of minimizing \eqref{BProxsubprob}, which can be obtained from Lemma 4.2 in \cite{jiang2020efficient}.
\begin{lemma}
	Given $ \alpha_k>0 $, $ \Phi^k $ and $ \bhpsi^k \in\calS_i$,  the subproblem
	\eqref{BProxsubprob} with $
	h(\bx)=\dfrac{a}{4}\|\bx\|^4+\dfrac{b}{2}\|\bx\|^2+1~
	(a,b >0) $ is well-defined and has the closed form as follows
	\begin{align}\label{BProx_Norm4}
	\bhpsi_i = [\alpha_k \calD_i + (ap^*+b)I]^{-1}\left(\nabla h(\bhpsi^k) - \alpha_k \calP_1\nabla F(\bhpsi^k, \bhphi_{\neq i}^k)\right),	
	\end{align}
	where $ \calD_i $ is given in \eqref{def:Matrix_D} and $ p^* $ is the fixed point of $  p =\|\bhphi_i^{k+1}\|^2 :=  r(p) $.
\end{lemma}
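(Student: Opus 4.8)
The plan is to exploit that the subproblem \eqref{BProxsubprob} is the minimization of a strongly convex objective over the affine subspace $\calS_i=\{\bhphi:e_1^\top\bhphi=0\}$, so that its first-order optimality condition is both necessary and sufficient and can be reduced to a closed form up to a single scalar unknown.

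First I would establish well-posedness. Since $b>0$, the kernel $h$ is strongly convex (its quartic part only contributes additional convexity), while $G_i$ is convex quadratic and the bracketed linear term is affine; hence the objective is strongly convex and coercive on the closed convex set $\calS_i$. Exactly as in the proof of Lemma~\ref{lem:welldefine}, this guarantees a unique minimizer $\bhphi_i^{k+1}$.

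Next I would introduce a Lagrange multiplier $\mu$ for the single equality constraint $e_1^\top\bhphi=0$ and write the stationarity condition. Computing the three gradients---$\calD_i\bhphi$ from $G_i$, the fixed vector $\nabla_i F(\bhpsi^k,\bhphi_{\neq i}^k)$ from the linear term, and $\tfrac{1}{\alpha_k}(\nabla h(\bhphi)-\nabla h(\bhpsi^k))$ from the Bregman term (centred at $\bhpsi^k$ as in \eqref{eq:updateBPG}), with $\nabla h(\bhphi)=(a\|\bhphi\|^2+b)\bhphi$---and multiplying through by $\alpha_k$ yields
\begin{equation*}
[\alpha_k\calD_i+(a p+b)I]\bhphi=\nabla h(\bhpsi^k)-\alpha_k\nabla_i F(\bhpsi^k,\bhphi_{\neq i}^k)+\alpha_k\mu e_1,
\end{equation*}
where $p:=\|\bhphi\|^2$. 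The key simplification is to read off the first coordinate: since the left-hand first component vanishes ($e_1^\top\bhphi=0$) and $\bhpsi^k\in\calS_i$ forces $(\nabla h(\bhpsi^k))_1=0$, one obtains $\alpha_k\mu=\alpha_k(\nabla_i F)_1$, so $\alpha_k\mu e_1=\alpha_k e_1e_1^\top\nabla_i F$. Substituting this back converts $-\alpha_k\nabla_i F+\alpha_k e_1e_1^\top\nabla_i F$ into $-\alpha_k\calP_1\nabla_i F$ with $\calP_1=I-e_1e_1^\top$, giving exactly the form \eqref{BProx_Norm4} once $p$ is identified with the value $p^*$.

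The main obstacle is the self-consistency of $p^*$: the inverse $[\alpha_k\calD_i+(ap+b)I]^{-1}$ depends on $p=\|\bhphi\|^2$, so the expression is only implicit. I would close this by setting $r(p):=\|[\alpha_k\calD_i+(ap+b)I]^{-1}(\nabla h(\bhpsi^k)-\alpha_k\calP_1\nabla_i F)\|^2$ and noting that $r$ is strictly decreasing in $p\ge 0$, since each diagonal entry of the inverse is strictly decreasing in $p$, whereas the left side of $p=r(p)$ is strictly increasing; hence $p=r(p)$ has a unique nonnegative root $p^*$. Uniqueness of the minimizer then forces $\|\bhphi_i^{k+1}\|^2=p^*$, and the closed form follows. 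As this is essentially the computation of Lemma~4.2 in \cite{jiang2020efficient} transported into the block setting, the remaining manipulations are routine.
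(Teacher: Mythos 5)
Your derivation is correct and is exactly the argument the paper delegates to Lemma~4.2 of \cite{jiang2020efficient}: strong convexity of the objective (from $b>0$) gives a unique minimizer, the KKT condition with the single multiplier for $e_1^\top\bhphi=0$ produces the projector $\calP_1=I-e_1e_1^\top$ because $(\nabla h(\bhpsi^k))_1=0$ on $\calS_i$, and the scalar self-consistency $p=r(p)$ has a unique nonnegative root since $r$ is nonincreasing while $p\mapsto p$ is strictly increasing. The paper itself gives no proof beyond the citation, so your write-up supplies precisely the omitted computation, transported to the block setting as intended.
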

It is noted that the iterate \eqref{BProx_Norm4} requires solving a nonlinear scalar equation, which can efficiently be solved by many existing solvers. 
In our implementation, the Newton method is used.
The concrete algorithm is given in Algorithm \ref{alg:ABBAPG-K-PFC} with $ K = 4 $.
\begin{algorithm}[!pbht]
	\caption{AB-BPG-K method for PFC model}
	\label{alg:ABBAPG-K-PFC}
	\begin{algorithmic}[1]
		\REQUIRE $\hPhi^0=\hPhi^{-1}\in\prod_{j=1}^s \calS_j$, $\rho\in (0,1)$, $ \sigma\geq\eta  >0$ and $ w_0 = 0 $, $\bar w,\alpha_0, M>0$, $ k = 0 $.
		\WHILE {stopping criterion is not satisfied}
		\STATE Pick $ i = b_k \in\{1,2,\cdots, s\}$ in a deterministic or random manner
		\STATE Update $\bhpsi^k = (1+w_k)\bhphi_{i}^{k} - w_k\bhphi_{i}^{pre}$
		\STATE Estimate $ \alpha_k $ by Algorithm \ref{alg:eststep}
		\IF {$K=2$}
		\STATE Calculate $\bz^k=\left(\alpha_k \calD_i + I\right)^{-1}\left(\bhpsi^k  -\alpha_k\calP_1\nabla_i F(\bhpsi^k, \bhphi_{\neq i}^k)\right)$
		\ELSIF {$K=4$}
		\STATE Calculate the fixed point of  \eqref{BProx_Norm4}.
		\STATE Calculate  $\bz^k=	[\alpha_k \calD_i + (ap^*+b)I]^{-1}\left(\nabla h(\bhpsi^k) - \alpha_k \calP_1\nabla_i F(\bhpsi^k, \bhphi_{\neq i}^k)\right)$
		\ENDIF
		\IF {$E(\hPhi^{m_k})-E(\bz^{k}, \bhphi_{\neq i}^k)\geq \sigma \|\bhphi_i^k-\bz^{k}\|^2$ }
		\STATE $ \bhphi_i^{k+1} = \bz^k $, $ \bhphi_j^{k+1} = \bhphi_j^k~ (j\neq i)$ and choose $w_{k+1}\in [0,\bar{w}]$.
		\ELSE
		\STATE Restart by setting $ \hPhi^{k+1} = \hPhi^k $ and  $w_{k+1}=0$.
		\ENDIF	
		\STATE $ k =k+1 $.
		\ENDWHILE	
	\end{algorithmic}
\end{algorithm}

\subsection{Convergence analysis of Algorithm \ref{alg:ABBAPG-K-PFC}}
The convergence analysis of Algorithm \ref{alg:BadaAPG} can be directly applied for Algorithm \ref{alg:ABBAPG-K-PFC} if
all required assumptions in Theorem \ref{thm:subseqConvergence} are satisfied.
We first show that the energy function $E$ of CMSH model satisfies Assumption~\ref{assum1}, Assumption~\ref{assum3}. Then,  Assumption \ref{assum2} is analyzed for Case (P2) and Case (P4) independently. 
\begin{lemma}\label{lem:PFCproperty1}
	Let $ E(\hPhi) = F(\hPhi) + \sum_{j=1}^sG_j(\bhphi_j) $ be the energy
	function of \eqref{Discret}.  Then, it satisfies
	\begin{enumerate}
		\item $E$ is bounded below and level bounded,
		% 		\item $E$ is a KL function, and thus satisfies Assumption \ref{assum4},
		\item  $ \ridom G_i = \bbC^{N_j} $, thus $ \pi_j(\calB(\hPhi^0)) \subseteq \ridom G_i $ for all $ j $.			
	\end{enumerate}
\end{lemma}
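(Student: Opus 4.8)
The plan is to prove both claims from the observation that, after discretization, $E$ is a real-valued polynomial of degree at most four in the real and imaginary parts of the Fourier coefficients $\hPhi=\{\bhphi_j\}_{j=1}^s$, and to read off boundedness below and level boundedness from coercivity. For the first claim I would begin by noting that each $G_j(\bhphi_j)=\tfrac12\langle\bhphi_j,\calD_j\bhphi_j\rangle$ is nonnegative, since the diagonal entries of $\calD_j$ are $(q_j^2-|\bk|^2)^2\ge 0$ and hence $\calD_j\succeq 0$; thus $\sum_j G_j\ge 0$ and it suffices to control $F$. Splitting $F=F_4+F_{\le 3}$, where $F_4$ collects the terms in \eqref{defined_GandF} with $\sum_j i_j=4$ and $F_{\le 3}$ gathers the remaining terms of total degree at most three, the decisive structural fact is that the quartic form $F_4$ is coercive, i.e.\ $F_4(\hPhi)\ge c\|\hPhi\|^4$ for some $c>0$. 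Granting this, one transfers from physical to reciprocal space via Parseval and uses the equivalence of the discrete $L^2$ and $L^4$ norms on the finite grid together with $\big(\sum_j\|\bhphi_j\|^2\big)^2\le s\sum_j\|\bhphi_j\|^4$; since $F_{\le 3}$ grows at most like $\|\hPhi\|^3$, the quartic term dominates and $E(\hPhi)\to+\infty$ as $\|\hPhi\|\to\infty$. A continuous coercive function on the finite-dimensional space $\bbC^N$ is bounded below and has bounded sublevel sets $[E\le\alpha]$, which is precisely level boundedness.

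For the second claim I would simply observe that $G_i(\bhphi_i)=\tfrac12\langle\bhphi_i,\calD_i\bhphi_i\rangle$ is finite for every $\bhphi_i\in\bbC^{N_i}$, so $\dom G_i=\bbC^{N_i}$; since the affine hull of $\bbC^{N_i}$ is the whole space and every point of it is interior, $\ridom G_i=\bbC^{N_i}$. Because the projection $\pi_j$ maps $\bbC^N$ into $\bbC^{N_j}$, it then follows trivially that $\pi_j(\calB(\hPhi^0))\subseteq\bbC^{N_j}=\ridom G_j$ for all $j$, which is exactly the last item of Assumption~\ref{assum1}.

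The routine ingredients are the nonnegativity of the $G_j$, the norm comparisons under Parseval, and the relative-interior computation. The one genuinely model-dependent step, and the main obstacle, is verifying that the degree-four part $F_4$ of the CMSH bulk energy is positive definite, so that $E$ is coercive; I expect this to reduce to checking that the quartic coefficients $\tau_{i_1,\dots,i_s}$ with $\sum_j i_j=4$ induce a positive definite form in $(\phi_1,\dots,\phi_s)$ at each grid point. Once this pointwise positivity is secured, both stated properties follow as above.
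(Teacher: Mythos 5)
Your proposal is correct and follows essentially the same route as the paper: coercivity of $F$ (hence of $E$, since each $G_j\geq 0$) gives compact sublevel sets and boundedness below, and $\dom G_j=\bbC^{N_j}$ immediately gives $\ridom G_j=\bbC^{N_j}$ and the inclusion $\pi_j(\calB(\hPhi^0))\subseteq\ridom G_j$. The difference is one of candor rather than of method: the paper's proof is a two-line assertion that $F$ is continuous and coercive, whereas you decompose $F=F_4+F_{\le 3}$, reduce everything to the positive definiteness of the quartic form $F_4$, and explicitly flag that this is the one model-dependent step. You are right to flag it, and you should be aware that neither you nor the paper actually discharges it: for arbitrary coefficients $\tau_{i_1,\dots,i_s}$ with $\sum_j i_j=4$ the quartic part of \eqref{defined_GandF} need not be positive definite (a negative $\tau_{4,0,\dots,0}$ already destroys coercivity along a coordinate direction), so the lemma as stated implicitly assumes a sign condition on the top-degree coefficients that holds for the parameter sets used in the experiments but is nowhere recorded as a hypothesis. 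Two small technical points if you were to write the coercivity step out in full: the pointwise positivity of the quartic form must be combined with the reality constraint $\hphi_j(-\bh)=\overline{\hphi_j(\bh)}$ (a real positive definite quartic does not control complex arguments), and the passage from $\sum_{\br}\big(\sum_j\phi_j(\br)^2\big)^2$ to $\|\hPhi\|^4$ uses Cauchy--Schwarz over the grid together with Parseval, exactly as your appeal to finite-dimensional norm equivalence suggests. With those caveats, your argument is sound and in fact more complete than the one in the paper.
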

\begin{proof}
	From the continuity and the coercive property of $ F $, i.e., $ F(\hPhi)\to +\infty $ as $ \hPhi\to \infty $, the sub-level set $ [E \leq \alpha] $ is compact for any $ \alpha\in\bbR $. 
	Since $ \dom G_i = \bbC^{N_j} $, we directly get that $ \ridom G_j =  \bbC^{N_j} $.
\end{proof}

\begin{lemma}\label{lem:PFCproperty2}
	Let $F(\hPhi)$ be defined in \eqref{defined_GandF}. Then, we have 
	\begin{enumerate}
		\item If $h$ is chosen as case (P2), then $F$ is block-wise relative smooth with respect to $h_i\equiv h$ in any compact set $[E\leq E(X^0)]$.
		\item If $h$ is chosen as the case (P4), then $F$ is block-wise relatively smooth to $h_i\equiv h$.
	\end{enumerate}
\end{lemma}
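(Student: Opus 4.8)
The plan is to check the two clauses of Lemma~\ref{lem:PFCproperty2} straight from the definition of block-wise relative smoothness: for each block $j$ and each fixed $\bx_{\neq j}$, I must exhibit a strongly convex kernel $h$ and a constant $R_f^j$ so that $R_f^j h(\bu)-F_j(\bu)$ is convex, where $F_j(\bu):=F(\bu,\bx_{\neq j})$. Since both candidate kernels are smooth and $F$ is a polynomial, I would use the second-order characterization of convexity: $R_f^j h-F_j$ is convex if and only if $\nabla^2 F_j(\bu)\preceq R_f^j\nabla^2 h(\bu)$ for all admissible $\bu$, working throughout in the real representation of $\bbC^{N_j}$.

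First I would record the kernel Hessians and their strong convexity. For case (P2), $h=\|\cdot\|^2/2$ gives $\nabla^2 h\equiv I$ and strong convexity constant $\gamma=1$. For case (P4), a direct computation of $h(\bu)=\frac a4\|\bu\|^4+\frac b2\|\bu\|^2+1$ yields $\nabla^2 h(\bu)=(a\|\bu\|^2+b)I+2a\,\bu\bu^{\top}\succeq(a\|\bu\|^2+b)I$, so $h$ is $b$-strongly convex and its Hessian grows quadratically in $\|\bu\|$. Next I would control $\nabla^2 F_j$ using the degree bound $n\le 4$: every monomial $\tau_{i_1,\dots,i_s}\prod_l\phi_l^{i_l}$ of $F$ has total degree at most four, so after fixing $\bx_{\neq j}$ and differentiating twice in $\bu$ one is left with a matrix that is polynomial in $\bu$ of degree at most two. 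The only monomial producing a genuine degree-two Hessian is the pure quartic self-interaction (forcing all other exponents to vanish once $i_j=4$), whose coefficient is a fixed constant independent of $\bx_{\neq j}$; every remaining term contributes a Hessian of growth at most linear in $\|\bu\|$, with coefficients polynomial in $\|\bx_{\neq j}\|$. Hence there are constants $C_0,C_1,C_2\ge 0$, with $C_2$ independent of $\bx_{\neq j}$, such that $\nabla^2 F_j(\bu)\preceq(C_2\|\bu\|^2+C_1\|\bu\|+C_0)I$.

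The two clauses then follow by comparison. For case (P2), I restrict $\bu$ to the compact sub-level set $[E\le E(X^0)]$: there $\|\bu\|$ is bounded, so the right-hand side above is dominated by a constant $R_f^j$, giving $\nabla^2 F_j\preceq R_f^j I=R_f^j\nabla^2 h$ on that set, which is exactly block-wise relative smoothness on the compact set as claimed. For case (P4), I compare leading orders: since $\nabla^2 h\succeq(a\|\bu\|^2+b)I$ while the bound on $\nabla^2 F_j$ grows like $C_2\|\bu\|^2$, choosing $R_f^j$ large enough that $R_f^j(a\|\bu\|^2+b)\ge C_2\|\bu\|^2+C_1\|\bu\|+C_0$ for all $\bu$ (possible since $R_f^j a$ can be made to exceed $C_2$ and then absorb the lower-order terms) yields $\nabla^2 F_j\preceq R_f^j\nabla^2 h$ \emph{globally} in $\bu$. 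Together with $\dom h=\bbC^{N_j}\subseteq\{\bu:F(\bu,\bx_{\neq j})<\infty\}$, both requirements of the definition are met, and the strong convexity constants are $1$ and $b$ respectively.

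The step I expect to be the main obstacle is the uniform operator-norm bound on $\nabla^2 F_j(\bu)$. After the pseudospectral discretization $F$ is a convolution in reciprocal space, so the block Hessian is a multiplication-type operator whose norm must be estimated through the physical-space product rather than read off termwise; one must track carefully that the quartic self-interaction alone drives the leading $\|\bu\|^2$ growth with an $\bx_{\neq j}$-independent coefficient, while every cross term stays lower order. This is precisely the structural fact that lets the quartic kernel (P4) deliver global relative smoothness, whereas the quadratic kernel (P2), having a constant Hessian, forces the restriction to the compact sub-level set.
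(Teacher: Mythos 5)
Your proof is correct and follows essentially the same route as the paper: the paper handles case (P2) by boundedness of the derivatives of the quartic polynomial $F$ on the compact sub-level set, and case (P4) by citing Lemma 2.1 of the reference \cite{li2019provable}, whose content is exactly the Hessian comparison $\nabla^2 F_j(\bu)\preceq R_f^j\nabla^2 h(\bu)$ for a degree-four polynomial against the kernel $\frac a4\|\bu\|^4+\frac b2\|\bu\|^2$ that you carry out explicitly. Your version merely unpacks that cited lemma (and is, if anything, slightly more careful about which monomials drive the leading $\|\bu\|^2$ growth), so there is no substantive difference in approach.
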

\begin{proof}
	Denote $\hPhi^{\otimes k}:= \hPhi\otimes\hPhi\otimes\cdots\otimes\hPhi$
	where $\otimes$ is the tensor product. Then, $F(\hPhi)$ is the $4^{th}$-degree polynomial,
	i.e., $F(\hPhi) = \sum_{k=2}^4\langle\mathcal{A}_k,\hPhi^{\otimes k}\rangle$
	where the $k^{th}$-degree monomials are arranged as a $k$-order tensor $\mathcal{A}_k$.
	For any compact set $[E\leq E(X^0)]$, $\nabla F$ is bounded and thus $F$ is
	block-wise relative smooth with respect to any polynomial function in $[E\leq
	E(X^0)]$ which includes case (P2). Moreover, for any fixed $ \bhphi_{\neq j}
	$, $ F_j(\bhpsi)  =F(\bhpsi, \bhphi_{\neq j}) $ is still a $ 4^{th} $-degree
	polynomial. When $h$ is chosen as (P4), according to Lemma 2.1 in\,\cite{li2019provable}, there exists $R_F^j>0$ such that $F_j(\bhpsi)$ is $R_F^j$-smooth relative to $ h $. 
\end{proof}
Combining Lemma \ref{lem:PFCproperty1}, Lemma \ref{lem:PFCproperty2} with Theorem
\ref{thm:subseqConvergence}, we can directly give the convergence analysis of
Algorithm \ref{alg:ABBAPG-K-PFC}.
\begin{thm}\label{thm:SeqConvergence-PFC}
	Let $ E(\hPhi)  = F(\hPhi) + \sum_{j=1}^sG_j(\bhphi_j)$ be the energy
	function which is defined in \eqref{Discret}. Then any limit point $ \hPhi^* $ of
	$ \{\hPhi^k\} $ is a critical point of E, i.e.,  $ \nabla E(\hPhi^*) = 0 $, if
	the sequence $\{\hPhi^k\}$ generated by Algorithm \ref{alg:ABBAPG-K-PFC}
	satisfies one of the following conditions:
	\begin{enumerate}
		\item  $K=2$ in Algorithm \ref{alg:ABBAPG-K-PFC} and  $\{\hPhi^k\}$ is bounded.
		\item  $K=4$ in Algorithm \ref{alg:ABBAPG-K-PFC}. 
	\end{enumerate}		
\end{thm}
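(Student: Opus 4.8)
The plan is to verify that the three structural hypotheses behind the abstract result, Theorem~\ref{thm:subseqConvergence}, are met by the CMSH energy $E(\hPhi) = F(\hPhi) + \sum_{j=1}^s G_j(\bhphi_j)$, and then to invoke that theorem directly. First I would check Assumption~\ref{assum1}: the bulk term $F$ is a polynomial of degree at most four, hence proper and $C^1$ on $\bbC^N$ (though non-convex); each $G_j(\bhphi_j) = \tfrac12\langle\bhphi_j,\calD_j\bhphi_j\rangle$ with $\calD_j$ a nonnegative diagonal matrix is proper, lower semicontinuous and convex; and $\calS_j = \{\bhphi: e_1^\top\bhphi = 0\}$ is a nonempty, closed, convex subspace contained in $\dom G_j = \bbC^{N_j}$. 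The two remaining items of Assumption~\ref{assum1}, namely that $E$ is bounded below and level bounded and that $\pi_j(\calB(\hPhi))\subseteq\ridom G_j$, are exactly the content of Lemma~\ref{lem:PFCproperty1}. Assumption~\ref{assum3} on the block update order is guaranteed by the selection rule in Algorithm~\ref{alg:ABBAPG-K-PFC}, so the only nontrivial point is Assumption~\ref{assum2}, the block-wise relative smoothness of $F$ with respect to the chosen kernel $h$.

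This last point is where the two cases diverge and is the heart of the argument. For $K=4$ (the quartic kernel, case (P4)), Lemma~\ref{lem:PFCproperty2}(2) shows that each restricted map $F_j$ is $R_F^j$-smooth relative to $h$ globally on $\bbC^{N_j}$, so Assumption~\ref{assum2} holds outright. Global relative smoothness validates the sufficient-decrease Lemma~\ref{lem:Suffdescent} and hence the line search of Algorithm~\ref{alg:eststep} everywhere, so the generalized descent of Lemma~\ref{lemma:de} confines $\{\hPhi^k\}$ to the compact sublevel set $[E\leq E(\hPhi^0)]$ provided by Lemma~\ref{lem:PFCproperty1}; the sequence is therefore automatically bounded, limit points exist, and Theorem~\ref{thm:subseqConvergence} applies, which is condition~(2). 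For $K=2$ (the quadratic kernel, case (P2)), Lemma~\ref{lem:PFCproperty2}(1) only gives relative smoothness of the degree-four polynomial $F$ with respect to $h=\|\cdot\|^2/2$ on compact sets, with a constant depending on the set. Here I would invoke the assumed boundedness of $\{\hPhi^k\}$: since the extrapolation points $\bhpsi^k = (1+w_k)\bhphi_i^k - w_k\bhphi_i^{\mathrm{prev}}$ with $w_k\in[0,\bar w]$ are affine combinations of bounded iterates, the whole trajectory together with its extrapolations lies in a single compact set on which a uniform $R_F^j$ is available. Assumption~\ref{assum2} then holds on the relevant region, the line search terminates, and Theorem~\ref{thm:subseqConvergence} again applies, giving condition~(1).

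Finally, in both cases Theorem~\ref{thm:subseqConvergence} yields $\vzero\in\partial E(\hPhi^*)$ for every limit point $\hPhi^*$. Because $F$ is a polynomial and each $G_j$ is a smooth quadratic, $E$ is continuously differentiable, so $\partial E(\hPhi^*)=\{\nabla E(\hPhi^*)\}$ and the criticality condition collapses to $\nabla E(\hPhi^*)=0$, as claimed.

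The main obstacle I anticipate is precisely the case $K=2$: the quadratic kernel fails to dominate the quartic growth of $F$, so relative smoothness is only local and the step-size constant $R_F^j$ is not available a priori on all of $\bbC^N$. Making the reduction rigorous requires the boundedness hypothesis to produce a compact set containing both the iterates and their extrapolations, and to confirm that the machinery of Lemma~\ref{lem:Suffdescent} and Algorithm~\ref{alg:eststep} remains valid there; by contrast, the quartic kernel in $K=4$ bypasses this difficulty entirely, which explains why no boundedness assumption is imposed in condition~(2).
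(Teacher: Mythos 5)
Your proposal is correct and follows essentially the same route as the paper: the paper likewise proves this theorem by combining Lemma~\ref{lem:PFCproperty1} and Lemma~\ref{lem:PFCproperty2} to verify Assumptions~\ref{assum1}--\ref{assum3} (with boundedness of $\{\hPhi^k\}$ supplying a compact set on which the local relative smoothness of the $K=2$ case applies) and then invoking Theorem~\ref{thm:subseqConvergence}. Your additional remarks on covering the extrapolation points and on $\partial E$ collapsing to $\nabla E$ are details the paper leaves implicit, but they do not change the argument.
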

It is noted that when $h$ is chosen as (P2), we cannot bounded the growth of $F$ as $F$ is a fourth order polynomial. Thus, the boundedness assumption of $\{\hPhi^k\}$ is imposed which is similar to the requirement as the semi-implicit scheme\,\cite{shen2010numerical}.

If $ M = 0 $, Theorem~\ref{thm:SeqConvergence} implies that the sub-sequence convergence can be strengthened by the requirement of the KL property of $ E $. According to Example 2 in\,\cite{bolte2014proximal}, it is easy to know that $ E(\hPhi) $ in our model is a semi-algebraic function, then it is a KL function by Theorem 2 in\,\cite{bolte2014proximal}. 
Thus, the sequence convergence of Algorithm~\ref{alg:ABBAPG-K-PFC} with $M=0$ is obtained as follows.
\begin{thm}
	Let $ E(\hPhi)  = F(\hPhi) + \sum_{j=1}^sG_j(\bhphi_j)$ be the energy
	function  defined in \eqref{Discret} and $ M = 0 $ in Algorithm
	\ref{alg:ABBAPG-K-PFC}. If  the sequence $\{\hPhi^k\}$ generated by Algorithm\ref{alg:ABBAPG-K-PFC} satisfies one of the conditions in Theorem \ref{thm:SeqConvergence-PFC}, then there exists some $\hPhi^*$ such that $\lim\limits_{k\to\infty}\hPhi^k=\hPhi^*$ and $\nabla E(\hPhi^*) = 0$.
\end{thm}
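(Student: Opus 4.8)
The plan is to obtain this statement as a direct corollary of the abstract sequence-convergence result, Theorem~\ref{thm:SeqConvergence}, since Algorithm~\ref{alg:ABBAPG-K-PFC} is nothing but Algorithm~\ref{alg:BadaAPG} specialized to the CMSH instance through the identification $f=F$, $g_i=G_i$, and $\calS_i=\{\bhphi:e_1^\top\bhphi=0\}$. Theorem~\ref{thm:SeqConvergence} delivers whole-sequence convergence to a critical point once three things are in place: (i) Assumptions~\ref{assum1}, \ref{assum2}, \ref{assum3} hold; (ii) $M=0$; and (iii) $E$ is a KL function. Accordingly, I would structure the proof as a verification of these three ingredients followed by a single invocation of the theorem.

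Ingredients (i) and (ii) require almost no new work. Condition (ii) is part of the hypothesis. For (i), Lemma~\ref{lem:PFCproperty1} supplies boundedness from below, level boundedness, and $\ridom G_j=\bbC^{N_j}$, which together give Assumption~\ref{assum1}; Lemma~\ref{lem:PFCproperty2} supplies the block-wise relative smoothness of $F$ with respect to the kernel $h$ --- on the compact sublevel set when $K=2$ (here the boundedness hypothesis of Theorem~\ref{thm:SeqConvergence-PFC} is exactly what confines the iterates to such a set) and globally when $K=4$ --- which is Assumption~\ref{assum2}; Assumption~\ref{assum3} on the update order is guaranteed by the block-selection rule built into the algorithm. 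These are precisely the hypotheses already assembled in Theorem~\ref{thm:SeqConvergence-PFC}, so under either of its two listed conditions every structural assumption of Theorem~\ref{thm:SeqConvergence} is met.

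The one genuinely new step is ingredient (iii). I would establish the KL property through semi-algebraicity. Identifying $\bbC^N$ with $\bbR^{2N}$ via the real and imaginary parts of the Fourier coefficients, the bulk term $F$ becomes a real polynomial of degree at most four, each $G_j(\bhphi_j)=\tfrac12\langle\bhphi_j,\calD_j\bhphi_j\rangle$ is a real quadratic form, and each feasible set $\calS_j$ is an affine subspace whose indicator function is semi-algebraic. Hence the extended objective $E+\sum_{j}\iota_{\calS_j}$ is a finite sum of semi-algebraic functions and is therefore semi-algebraic; by Theorem~2 of~\cite{bolte2014proximal} it satisfies the KL inequality at every point of its domain. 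This is the step to write out carefully, although it is routine once the polynomial structure is exposed.

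With (i)--(iii) confirmed, I would apply Theorem~\ref{thm:SeqConvergence} to conclude that the full sequence converges, $\lim_{k\to\infty}\hPhi^k=\hPhi^*$, with $\vzero\in\partial E(\hPhi^*)$; on the feasible set this subdifferential inclusion coincides with the critical-point characterization used in Theorem~\ref{thm:SeqConvergence-PFC}, namely $\nabla E(\hPhi^*)=0$. The main obstacle is thus solely the KL verification, and even that is standard given the semi-algebraic structure of the CMSH energy; the remainder of the argument is bookkeeping to match the hypotheses of the abstract theorem.
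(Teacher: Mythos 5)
Your proposal is correct and follows essentially the same route as the paper: the paper likewise invokes Lemmas~\ref{lem:PFCproperty1} and~\ref{lem:PFCproperty2} to secure the assumptions of Theorem~\ref{thm:SeqConvergence-PFC}, establishes the KL property of $E$ by noting it is semi-algebraic (via Example 2 and Theorem 2 of~\cite{bolte2014proximal}), and then applies Theorem~\ref{thm:SeqConvergence} with $M=0$. Your write-up is in fact slightly more explicit than the paper's on the semi-algebraicity step (the real-imaginary identification and the polynomial/affine structure), but the argument is identical in substance.
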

From the above analysis, we know that the proposed AB-BPG method has the proven convergence to some stationary point compared to the current gradient flow based methods. Moreover, it has shown that the generated sequence has the (generalized) energy dissipation and mass conservation properties.

\section{Numerical results}\label{sec:results}
In this section, we apply the AB-BPG-K approaches (Algorithm~\ref{alg:ABBAPG-K-PFC}) to binary, ternary, quinary 
component systems based on the CMSH model. 
The efficiency and accuracy of our 
methods are demonstrated through comparing with existing methods, including the
first-order semi-implicit scheme (SIS), the second-order Adam-Bashforth with Lagrange
extrapolation approach (BDF2)\,\cite{jiang2020stability}, the scalar auxiliary variable (SAV)
method\,\cite{shen2018scalar}, the stabilized scalar auxiliary variable (S-SAV). Note that these
employed methods all guarantee the equality constraint, i.e., mass conservation. 

The step sizes $ \alpha_k $ in the AB-BPG approaches are adaptively obtained via the linear search technique. To be fair, adaptive time stepping are applied to gradient flow methods. The SIS and BDF2 scheme use the adaptive time stepping\,\cite{zhang2013adaptive}
	    \begin{equation}\label{eq:adpstep1}
	        \alpha_k = \max\left\{ \alpha_{\min}, \frac{\alpha_{\max}}{\sqrt{1 + \rho |E'(\Phi^k)|^2}}\right\}
	    \end{equation}
where $E$ is the energy functional defined in the model. The constant $\alpha_{\min},\alpha_{\max},$ and $\rho$ are set to $\alpha_{\min} = 0.001, \alpha_{\max} = 0.1$ and $\rho =50$ as\,\cite{zhang2013adaptive} suggested. The adaptive SAV scheme is implemented as Algorithm 1 in\,\cite{shen2019new}, where a semi-implicit first-order SAV scheme and a semi-implicit second-order SAV scheme based on Crank-Nicolson  are computed at one step iteration. The adaptive time stepping is in the form of 
\begin{equation}\label{eq:adpstep2}
    \alpha_{k+1} = \max\left\{\alpha_{\min},\min\left\{\rho\left(\frac{\mathrm{tol}}{e_{k+1}}\right)^{1/2}\alpha_k,\alpha_{\max}\right\}\right\}
\end{equation}
where $\rho$ is a default safety coefficient, $\mathrm{tol}$ is a reference tolerance, $e_{k+1}$ is the relative error between a first-order SAV scheme and a second-order SAV scheme at each time level. $e_{n+1}$ is the relative error between first-order SAV scheme and second-order scheme at each time level. The original parameters ($\rho = 0.9$, $\mathrm{tol} = 10^{-3}$, $\alpha_{\min} = 10^{-5}$, $\alpha_{\max} = 10^{-2}$) taken in work\,\cite{shen2019new} are inefficient to 
compute stationary states. In our implementation, we carefully choose appropriate parameters 
case by case to ensure better numerical performance.
% , while other parameters always remain unchanged. 
% and constants are taken as $\alpha_{\min} = 10^{-5}$, $\alpha_{\max} = 0.5$, $\rho = 0.9$, $\mathrm{tol} = 10^{-3}$
Compared to the adaptive SAV scheme, the S-SAV scheme adds a first-order stabilization term $S_1(\Phi^{k+1}-\Phi^{k})$ and a second-order stabilization term $S_2(\Phi^{k+1}-2\Phi^{k} + \Phi^{k-1})$\,\cite{shen2010numerical} to the SAV-SI scheme and the SAV-CN scheme, respectively. In our computation,                 we let $S_1 = S_2 = 10$  and  use the adaptive times stepping \eqref{eq:adpstep2} with $\rho = 0.9$, $\mathrm{tol} = 10^{-3}$, $\alpha_{\min} = 10^{-5}$, $\alpha_{\max} = 1$. 

Our methods, the adaptive SIS and the adaptive BDF2 use a Gauss-Seidel manner to update order parameters in a fixed cyclic order. While the adaptive SAV and adaptive S-SAV method keep the same as Algorithm 1 in\,\cite{shen2019new}, which uses a Jacobian manner to update all order parameters simultaneously in one step iteration. 
% In fact, our method can be naturally modified to a version with an Jacobian updated manner, and the theoretical analysis for this version is easily obtained from our previous work\,\cite{jiang2020efficient} by letting $g = \sum_i G_i$, $f = F$. But in this work, we focus on the numerical performances when the Gauss-Seidel update manner is used.
In our implementation, all approaches are stopped when $\|\nabla E(\hPhi)\|_\infty < 10^{-7}$  or the energy difference between two iterations is less than $10^{-14}$.
All experiments were performed on a workstation with a 3.20 GHz CPU (i7-8700, 12 processors). All codes were written by MATLAB without parallel implementation.

\subsection{ Binary component systems}
We first choose $ s = 2 $  in \eqref{eq:energy} and take the two-dimensional decagonal quasicrystal as an example to examine our approaches' performance. The decagonal quasicrystal can be embedded into a four-dimensional periodic structure. Therefore, we carry out the projection method in four-dimensional space. The $4$-order invertible matrix $\bB$ associated with the  four-dimensional periodic structure is chosen as $\bI_4$.  
The corresponding computational domain in physical space is $[0,2\pi)^4$. 
The projection matrix $\calP$ in \eqref{eq:pm} of the decagonal quasicrystals is
\begin{equation}
\mathcal{P} =\left(
\begin{array}{cccc}
1 & \cos(\pi/5) & \cos(2\pi/5) & \cos(3\pi/5) \\
0 & \sin(\pi/5) & \sin(2\pi/5) & \cos(3\pi/5) 
\end{array}
\right).
\label{eqn:DQC:projMatrix}
\end{equation} 
We use $38^4$ plane wave functions to discretize the binary CMSH energy functional. The
parameters in \eqref{eq:energy} are given in Table \ref{tab:DQCparas}. The initial
configuration of order parameters is chosen as references \cite{jiang2016stability,jiang2020efficient} suggest. 
The stationary quasicrystals, including physical space morphology and Fourier
spectra, are given in Figure \ref{fig:DQCPhase}.

\begin{table}[!htbp]
	\centering
	\caption{The non-zero model parameters used in computing binary
		decagonal quasicrystal.}
	\begin{tabular}{|c|}
		\hline
		\begin{tabular}[c]{@{}c@{}}  $ c = 20 $, $ q_1 =1 $, $ q_2 =2\cos(\pi/5) $, $ \tau_{0,2} = \tau_{2,0} = -0.1 $, $ \tau_{0,3} = \tau_{3,0} = -0.3 $, \\ $ \tau_{1,2} =  \tau_{2,1} = -2.2 $,$ \tau_{0,4} = \tau_{4,0} =\tau_{1,1} =  \tau_{2,2} =   \tau_{1,3} =  \tau_{3,1} = 1 $.
		\end{tabular} \\ \hline
	\end{tabular}
	\label{tab:DQCparas}
\end{table}

\begin{figure}[!htbp]
	\centering
	\subfigure[]{\includegraphics[scale=0.2]{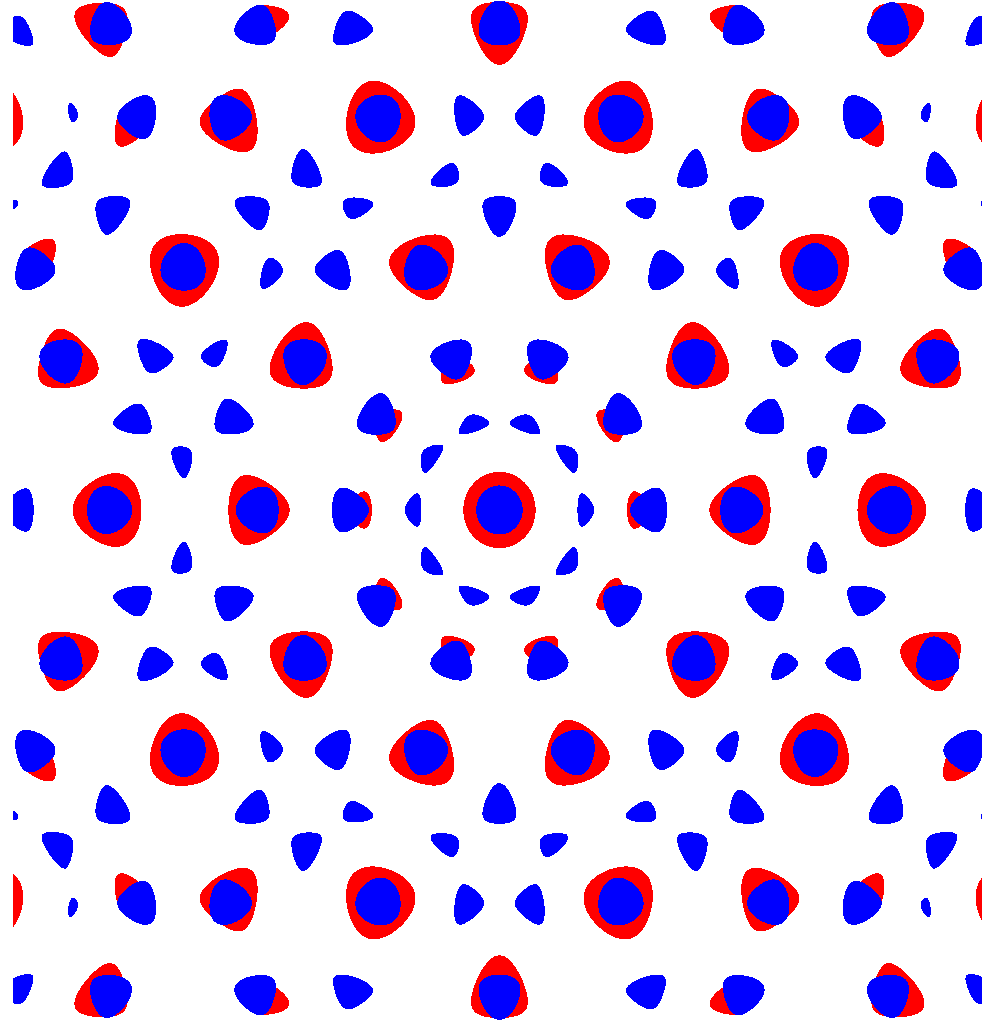}}
	\subfigure[]{\includegraphics[scale=0.25]{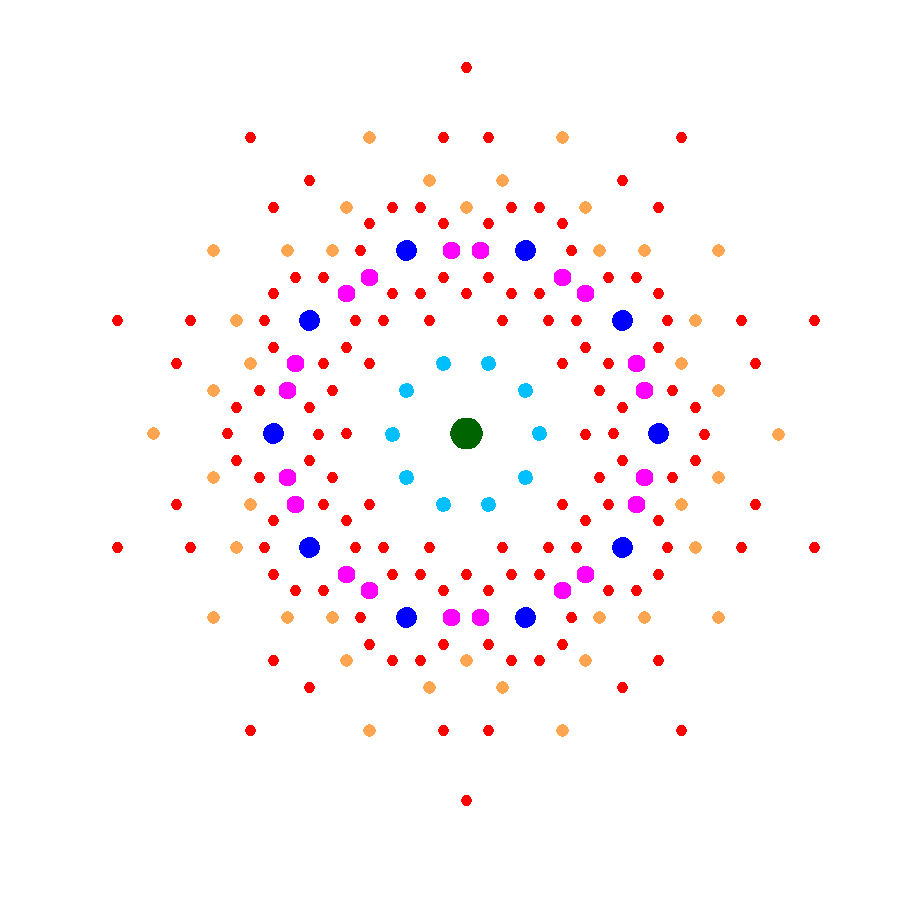}}
	\subfigure[]{\includegraphics[scale=0.25]{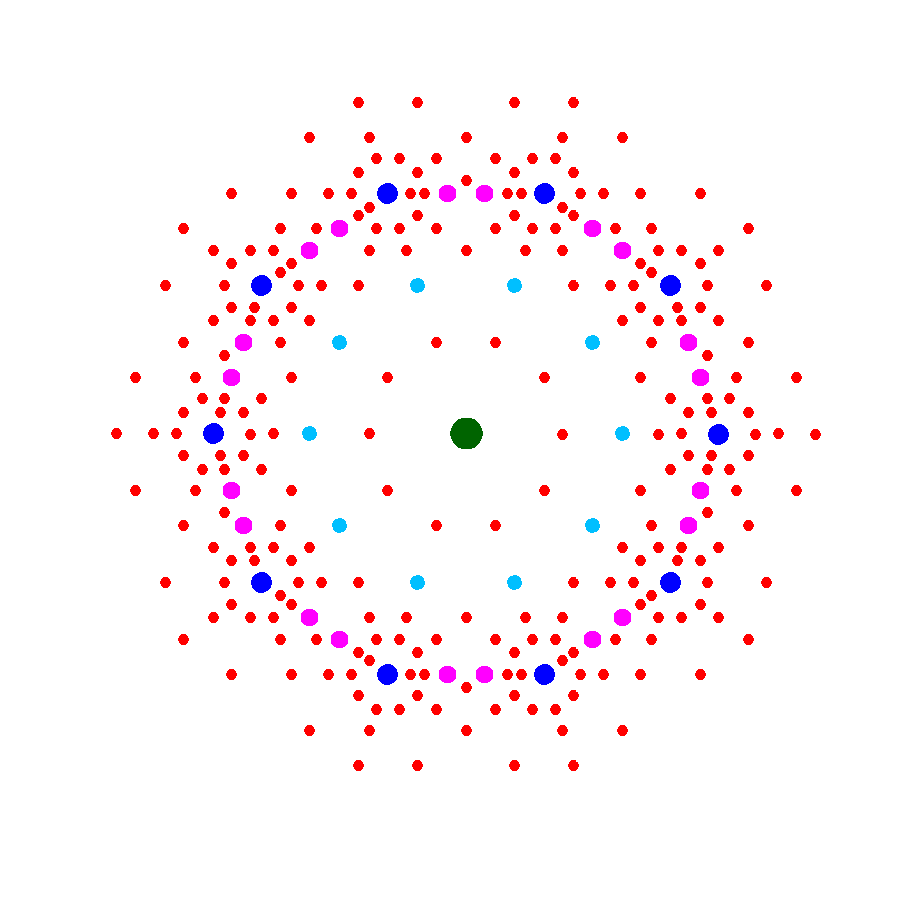}}
	%	\begin{minipage}[t]{0.15\linewidth}
	%		\centerline{\includegraphics[scale=0.12]{figure/DQC/DQCPhaseMulti-2-1.png}}
	%	\end{minipage}
	%	\hspace{0.10\linewidth}
	%	\begin{minipage}[t]{0.15\linewidth}
	%		\centerline{\includegraphics[scale=0.15]{figure/DQC/DQC-CplxPhase-2-1-1-new.png}}
	%	\end{minipage}
	%	\hspace{0.10\linewidth}
	%	\begin{minipage}[t]{0.15\linewidth}
	% 	\centerline{\includegraphics[scale=0.15]{}}
	%	\end{minipage}
	\caption{The stationary decagonal quasicrystal in the binary CMSH model. \textbf{(a):} The
		spatial distribution of density, where the red and blue colors correspond to the
		rich concentration of $ \phi_1 $ and $ \phi_2 $, respectively;
		\textbf{(b):} The Fourier spectra of $ \phi_1 $; \textbf{(c):} The
		Fourier spectra of $ \phi_2 $. Only Fourier spectral points whose
		coefficient intensities are larger than $10^{-4}$ are presented.
	}
	\label{fig:DQCPhase}
\end{figure}

\subsubsection{Algorithm study}
In this subsection, we take the binary CMSH model as an example to show our proposed AB-BPG method's performance by choosing different hyperparameters, including the choice of the step sizes, the descent subsequence, and the block update manner. Similar results exist in the following ternary and quinary cases. For simplicity, we only present the results in the computing binary CMSH model.

The step size of the AB-BPG methods is adaptively obtained by Algorithm~\ref{alg:eststep}. In our implementation, we set $\alpha_0 = 0.1$, $\varsigma = (\sqrt{5} - 1)/2$, $\eta = 10^{-12}$, $\alpha_{\min} = 10^{-6}$ and $\alpha_{\max} = 10$ in Algorithm~\ref{alg:eststep}. Figure~\ref{fig:AlgorithmStudy} (a) illustrates the adaptive step sizes versus the iteration when $M=0$ and $a=1$. 

In Figure~\ref{fig:AlgorithmStudy} (b), it shows the total energy of the sequence $\{\hPhi^k\}$ and the subsequence $\{\hPhi^{m_k}\}$ defined in \eqref{def:m_k} versus the iterations with $M=5$ and $a = 0$. It is observed that the subsequence $\{E(\hPhi^{m_k}\})$ is monotone decreasing which validates the generalized energy dissipation property proved in Lemma \ref{lemma:de}.

In the next, we consider the different choices of the update blocks. One is the cyclic rule that updates $\hPhi_1$ and $\hPhi_2$ alternatively in the Gauss-Seidel fashion. The other is to randomly choose the update block, and $\hPhi_1$ and $\hPhi_2$ should be updated at least once in $ 10 $ consecutive iterations. In this case, we set $a=M=0$, and we independently run the random update rule 50 times and report its terminated iteration numbers for each trial. The result is shown in Figure~\ref{fig:AlgorithmStudy} (c). Compared to the random update manner, it is observed that the cyclic rule needs fewer iterations for the desired accuracy. Therefore, in the following simulations, we only consider the cyclic update order when carrying out the AB-BPG algorithms.
\begin{figure}
	\centering
	\subfigure[]{\includegraphics[scale=0.25]{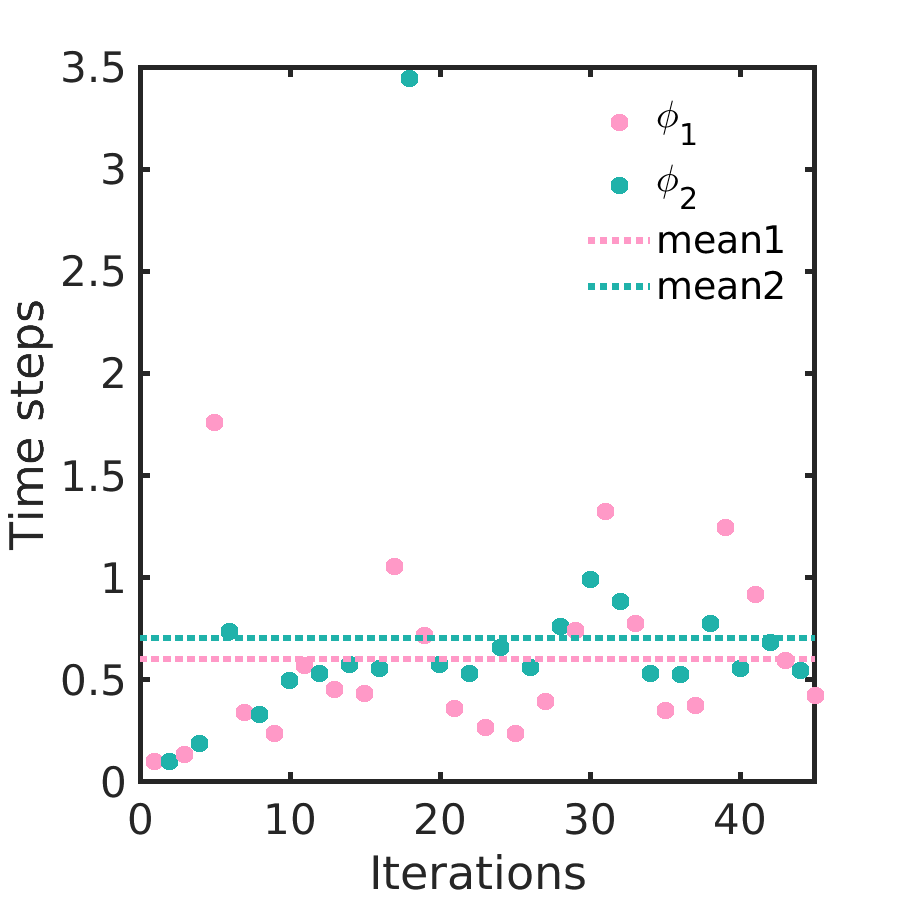}}
	\subfigure[]{\includegraphics[scale=0.25]{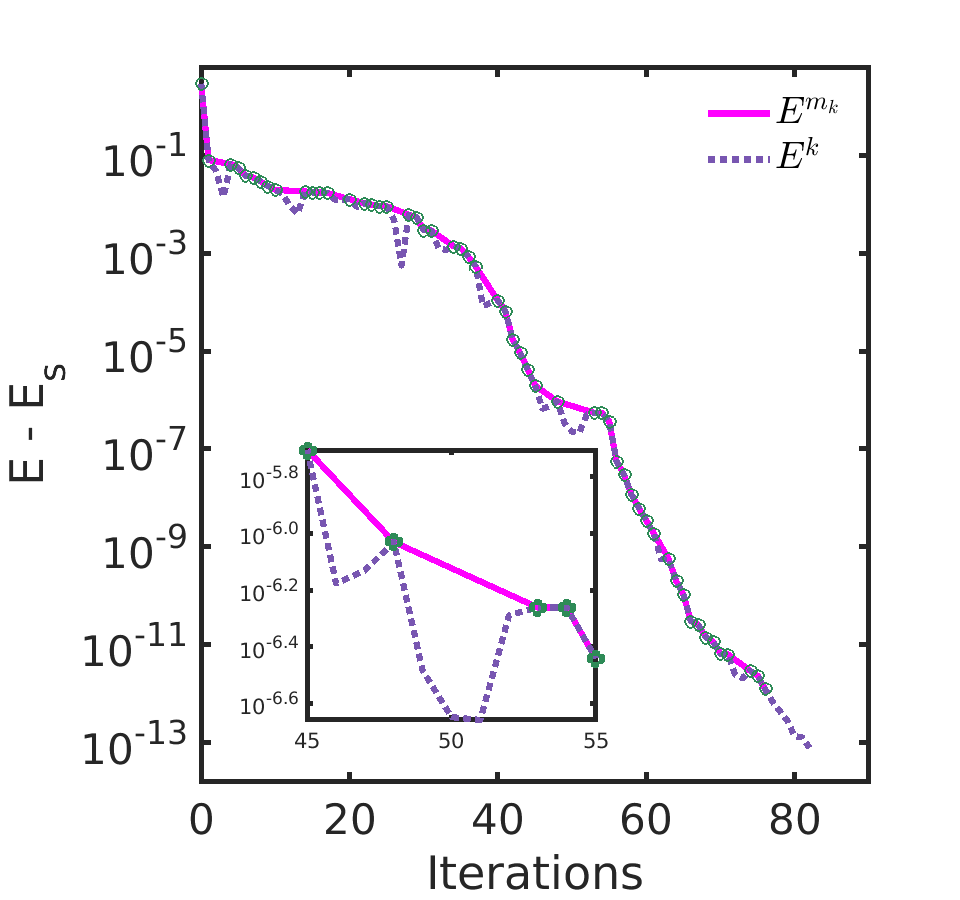}}
	\subfigure[]{\includegraphics[scale=0.25]{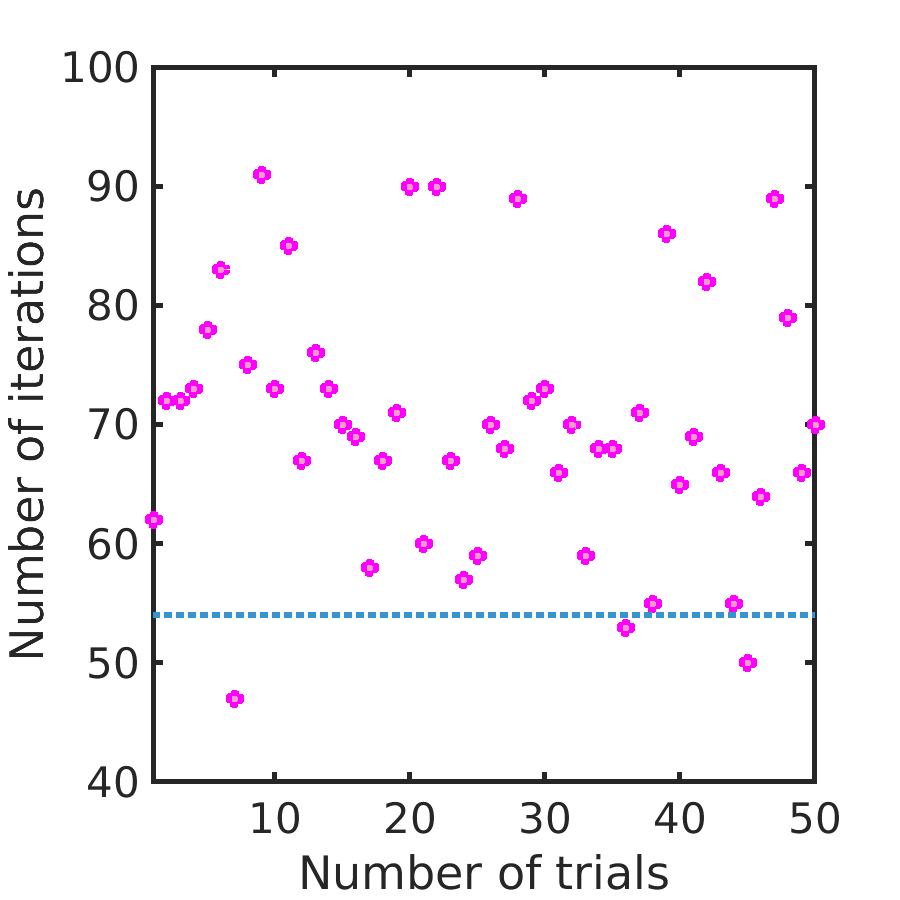}}
	\caption{
		\textbf{(a)}: The adaptive step size obtained by the AB-BPG-4 ($ M = 0 $, $ a = 1
		$). The mean step sizes of $ \phi_1 $ and $ \phi_2 $ are $0.6033$ and $0.7062$, respectively. 
		\textbf{(b)}: The tendency of energy $\{E^k=E(\hPhi^k)\}$ and energy $\{E^{m_k}=E(\hPhi^{m_k})\}$ in AB-BPG-2 ($ M = 5 $, $ a = 0
		$).
		The	green o's mark the position of $(m_k, E^{m_k})$.
		\textbf{(c)}: Numerical behavior of the AB-BPG-2 ($M=0$, $a=0$)  via updating order parameters in a random manner. The blue dotted line and the pink points denote the convergent iterations of the AB-BPG methods via cyclic and random order update, respectively.
	}
	\label{fig:AlgorithmStudy}
\end{figure}

\subsubsection{Comparison with other methods}
We compare AA-BPG methods with alternative methods, including adaptive SIS, adaptive BDF2, adaptive SAV and adaptive S-SAV. Theoretically, 
the SAV method always has a modified energy dissipation 
through adding a sufficiently large positive scalar auxiliary variable $C$
which guarantees the boundedness of the bulk energy term.
In practice, the original energy dissipation property might depend on the selection of $C$. 
When computing the decagonal quasicrystal in the binary CMSH model,
the adaptive SAV scheme keep the original energy dissipate when $C=10^8$. 
The times stepping for adaptive SAV scheme is obtained by formula \eqref{eq:adpstep2} with 
$\alpha_{\min} = 10^{-5}$, $\alpha_{\max} = 0.5$, $\rho = 0.9$, $\mathrm{tol} = 10^{-3}$.
For the AB-BPG approaches, we choose different values of $ M $ in  \eqref{def:m_k}
and  $ a $ in Bregman divergence \eqref{kernel} for comparison.
The linear search technique can obtain the step size of the AB-BPG methods.

Table \ref{tab:DQCres} shows the corresponding numerical results of the AB-BPG, adaptive SIS, adaptive BDF2,
adaptive SAV and adaptive S-SAV schemes.
We choose a reference energy value $ E_{s} = -1.54929536255898\times 10^{-2} $, which is computed via the semi-implicit scheme using $ 56^4 $ plane wave functions.
The minimal iteration steps and the least CPU time are emphasized via the bold font.  
From Table \ref{tab:DQCres}, one can find that the proposed approaches are superior to other methods. In particular, when
$a=1$, the AB-BPG method with $M=0$ takes $45$ iterations ($69.17$ seconds) to
reduce the gradient error of $10^{-7}$, which is $2.35$ times faster than the adaptive SIS,
$4$ times than the adaptive BDF2 and adaptive SAV schemes,
10.73 times than the adaptive S-SAV scheme. Moreover, a great deal of the adaptive time stepping for gradient flow methods is that empirical parameters are involved in the formula \eqref{eq:adpstep1} and \eqref{eq:adpstep2}, where the best-performing parameters usually cannot be guided in advance. Unsuitable choice of parameters may lead to too small step size (inefficient performance) or too large step size (divergence). While in our AB-BPG methods, efficient performances are shown in a wild range of choice of parameters. More importantly, the original energy dissipation and convergence are both guaranteed within any choice of parameters in our methods. Figure \ref{fig:DDQC1:comparison} presents the iteration process of
relative energy difference, CPU time, and gradient error of different approaches.

\begin{table}[htbp]
	\centering
	\caption{Numerical results of computing binary decagonal quasicrystal.}
	\begin{tabular}{|c|c|c|c|c| c|}
		\hline
		{$M$}                  & {$a$} & {Iterations}   & {CPU Time (s)}    & {Gradient error ($10^{-8}$)} & $|E-E_s|$ ($10^{-14}$)\\ \hline
		{}                     & {0}   & {54}           & {72.56}           & {9.74}  & 4.07\\ \cline{2-6}  
		                       & {0.1} & 46             & 66.04             & {9.42} & 51.54\\ \cline{2-6} 
		\multirow{-3}{*}{{0}}  & {1}   & {\textbf{45}}  & {\textbf{69.17}}  & {7.95} & 13.78\\ \hline
		{}                     & {0}   & {65}           & {86.62}           & {4.91} & 0.76\\ \cline{2-6} 
		                       & {0.1} & {59}           & {85.10}           & {9.25} & 23.08\\ \cline{2-6} 
		\multirow{-3}{*}{{5}}  & {1}   & {57}           & {82.79}           & {7.99} & 18.62\\ \hline
		{}                     & {0}   & {65}           & {85.88}           & {4.01}  & 0.06 \\ \cline{2-6} 
		{}                     & {0.1} & {59}           & {85.31}           & {9.60} & 95.61\\ \cline{2-6} 
		\multirow{-3}{*}{{10}} & {1}   & {56}           & {79.38}           & {7.13}  & 2.98\\ \hline
		\multicolumn{2}{|c|}{{Adaptive SIS}}            & {293}             & {163.10}    & {9.98}  & 152.28\\ \hline
		\multicolumn{2}{|c|}{{Adaptive BDF2}}           & {305}             & {276.98}    & {9.77} & 142.73\\ \hline
		\multicolumn{2}{|c|}{{Adaptive SAV}}            & {94}              & {283.14}    & {8.86} &   117.56 \\ \hline
		\multicolumn{2}{|c|}{{Adaptive S-SAV}}          & {236}             & {742.46}    & {9.81} &   141.65\\ \hline
	\end{tabular}
	\label{tab:DQCres}
\end{table}

\begin{figure}[htbp]
	\centering
	\centerline{\includegraphics[scale=0.09]{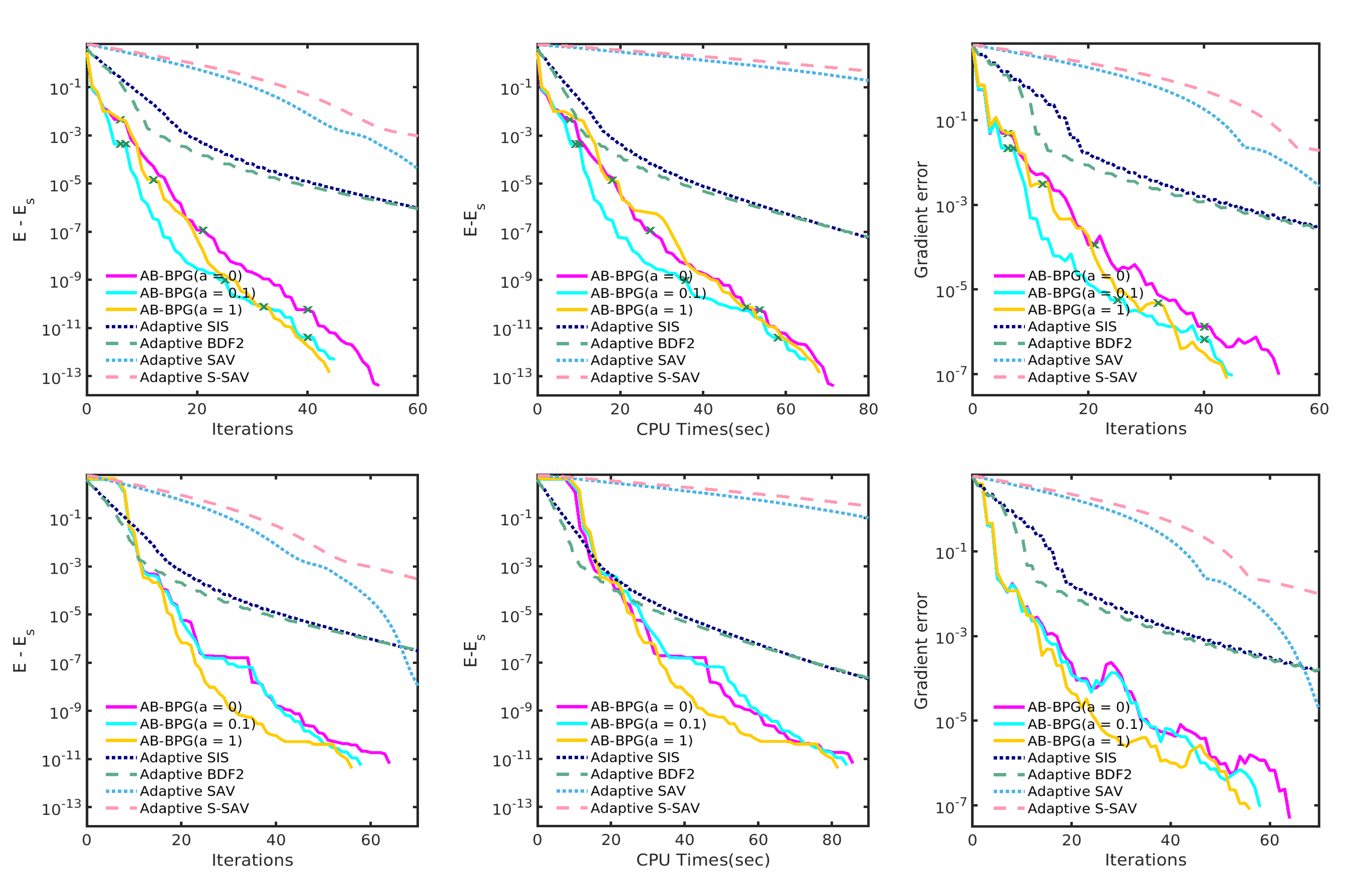}}
	%	\begin{minipage}[t]{0.10\linewidth}
	%		\centerline{\includegraphics[scale=0.25]{}}
	%	\end{minipage}
	%	\hspace{0.2\linewidth}
	%	\begin{minipage}[t]{0.10\linewidth}
	%		\centerline{\includegraphics[scale=0.25]{}}
	%	\end{minipage}
	%	\hspace{0.2\linewidth}
	%	\begin{minipage}[t]{0.10\linewidth}
	%		\centerline{\includegraphics[scale=0.25]{}}
	%	\end{minipage}
	%	\hspace{0.4\linewidth}
	
	%	\begin{minipage}[t]{0.10\linewidth}
	%		\centerline{\includegraphics[scale=0.25]{}}
	%	\end{minipage}
	%	\hspace{0.2\linewidth}
	%	\begin{minipage}[t]{0.10\linewidth}
	%		\centerline{\includegraphics[scale=0.25]{}}
	%	\end{minipage}
	%	\hspace{0.2\linewidth}
	%	\begin{minipage}[t]{0.10\linewidth}
	%		\centerline{\includegraphics[scale=0.25]{}}
	%	\end{minipage}
	%	\hspace{0.4\linewidth}
	
	\caption{Numerical behaviors of our algorithms, adaptive SIS, adaptive BDF2, adaptive SAV,
	adaptive S-SAV for computing binary
		decagonal quasicrystal. 
		\textbf{First row}: $ M = 0 $; \textbf{Second row}: $M = 5 $;
		\textbf{Left column}: Relative energy over iterations; \textbf{Middle column}:
		Relative energy over CPU time; \textbf{Right}: Gradient error over iterations;
		The green $ \times $s mark where restarts occurred. }
	\label{fig:DDQC1:comparison}
\end{figure}

\subsection{ Ternary component systems}

We consider the ternary component system when $s=3$ in the CMSH model. 
A periodic structure of the sigma phase, which is a complicated spherical packed structure discovered in multicomponent material systems\,\cite{lee2010discovery}, is used to examine the performance of our algorithms.
For such a pattern, we implement our algorithm 
on a bounded computational domain $[0, 27.7884)\times [0,
27.7884)\times [0, 14.1514)$ and  $200\times 200 \times 100$ plane wave 
functions are used to for discretization. 
The initial value of each component of the sigma phase is input, as suggested in\,\cite{xie2014sigma,arora2016broadly}.  
The parameters in the ternary CMSH model are given in Table \ref{tab:Sigmaparas}.
The stationary sigma phase is present in Figure \ref{fig:SigmaPhase}. 

\begin{table}[!htbp]
	\centering
	\caption{
		The non-zero model parameters used in computing ternary sigma phase.}
	\label{tab:Sigmaparas}
	\begin{tabular}{|c|}
		\hline
		\begin{tabular}[c]{@{}c@{}}$ c = 1 $, $ q_1 =q_2 = q_3=1 $, $ \tau_{2,0,0} = \tau_{0,2,0} = \tau_{0,0,2} = -0.2 $,
			$ \tau_{3,0,0} = \tau_{0,3,0} =  \tau_{0,0,3} = -0.3 $,\\ $ \tau_{4,0,0} =\tau_{0,4,0}=\tau_{0,0,4} = 0.1 $, 
			$ \tau_{2,1,1} = \tau_{1,2,1} =\tau_{1,1,2}= -0.1 $.
		\end{tabular} \\ \hline
	\end{tabular}
\end{table}

\begin{figure}[!htbp]
	\centering
	\centerline{\includegraphics[scale=0.15]{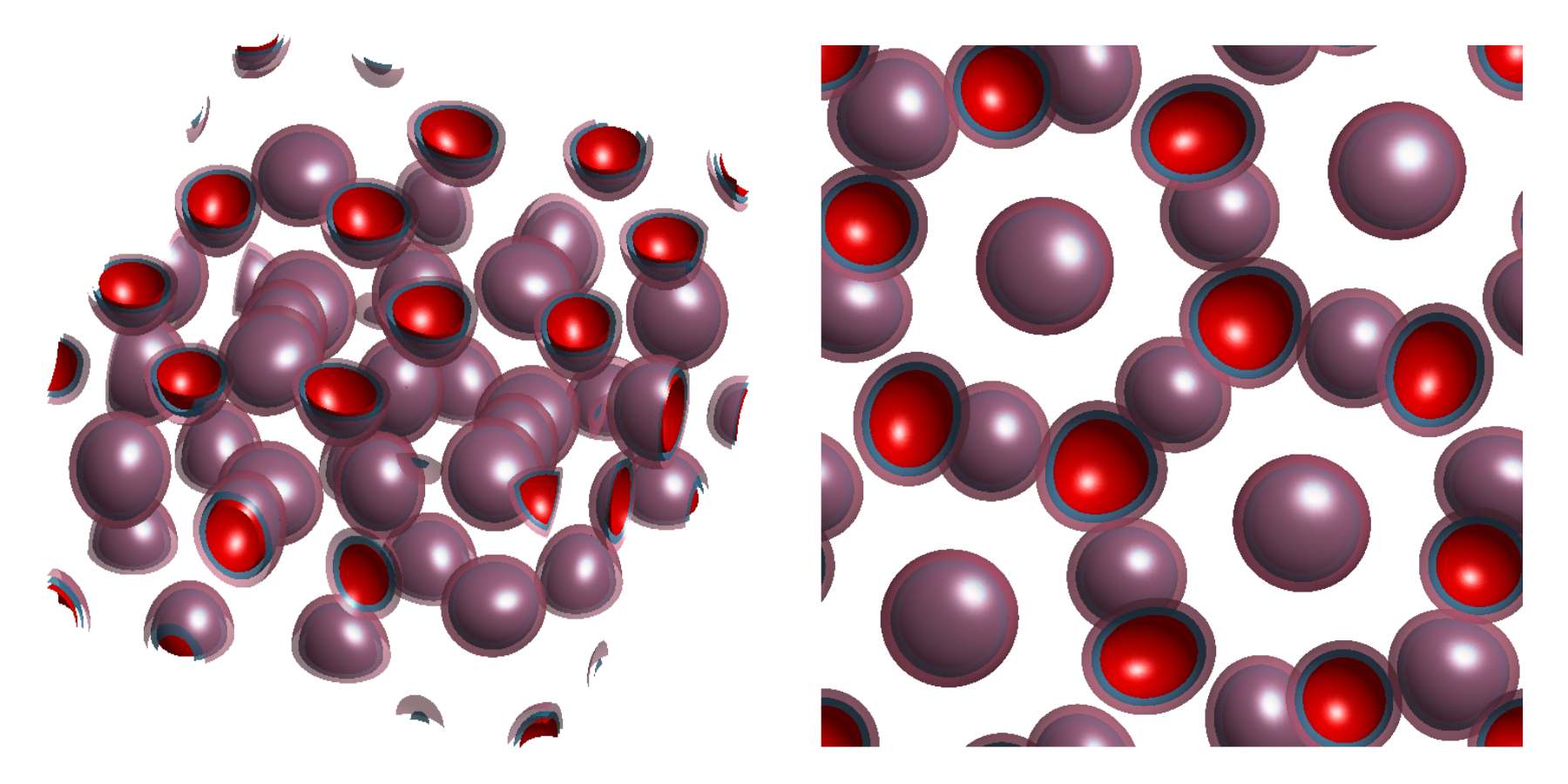}}
	\caption{The stationary sigma phase in ternary CMSH model from two perspectives. }
	\label{fig:SigmaPhase}
\end{figure}

Similar to the binary case, we report the AB-BPG algorithms' performance with different choices of $a$ and $M$.  In the adaptive SAV method, 
the auxiliary parameter $C$ is set to $ 10^{8} $ and the parameters in formula \eqref{eq:adpstep2} are chosen as
$\alpha_{\min} = 10^{-5}$, $\alpha_{\max} = 0.2$, $\rho = 0.8$, $\mathrm{tol} = 10^{-3}$.
Table \ref{tab:Sigmaresl} presents the corresponding numerical results and 
Figure \ref{fig:Sigma:comparison} gives the iteration process of different methods. 
The reference energy value $ E_s = -1.16910245253091 $ is obtained numerically
via semi-implicit scheme by using $256\times 256 \times 128$ plane wave functions.
As is evident from these results, the AB-BPG methods demonstrate a great advantage
in computing such a complicated periodic structure over other
schemes. More precisely, when $M=5$ and $a=1$, the 
AB-BPG method spends $416$ iterations to achieve the prescribed error, which
converges $16.5$ times faster than the adaptive SIS and adaptive BDF2 methods,
$ 3 $ times than the adaptive SAV method and $ 6.5 $ times than the adaptive S-SAV method. 
From the cost of CPU time, the AB-BPG algorithm with $M =5$, $a= 1$ takes $1358.93$ seconds to achieve
an accuracy of $10^{-7}$ in error, almost $10.7\%$, $10.8\%$, $11.9\%$  and $6.5\%$ the time
employed by the adaptive SIS, adaptive BDF2, adaptive SAV and adaptive S-SAV schemes.

\begin{table}[!htbp]
	\centering
	\caption{ Numerical results of computing ternary sigma phase.}
	\label{tab:Sigmaresl}
	\begin{tabular}{|c|c|c|c|c|c|}
		\hline
		{$M$}                  & {$a$} & {Iterations} & {CPU Time (s)} & Gradient error ($10^{-8}$)     & $|E-E_s|$  ($10^{-12}$)               \\ \hline
		                       & {0}   & {758}        & {2185.19}   & {6.47}  & 3.22\\ \cline{2-6} 
	                           & {0.1} & {770}        & {2549.49}   & {9.15}  & 3.57\\ \cline{2-6} 
		\multirow{-3}{*}{{0}}  & {1}   & {593}        & {1952.63}   & {9.71}  & 4.79 \\ \hline
		                       & {0}   & {739}           & {2068.65}          & {9.50}     & 3.54    \\ \cline{2-6} 
		                       & {0.1} & {611}        & {1991.94}   & {8.23} & 2.01\\ \cline{2-6} 
		\multirow{-3}{*}{{5}} & {1}   & {\textbf{416}}           & {\textbf{1358.93}}          & {8.51}          & 0.68          \\ \hline
		                       & {0}   & {677}           & {1896.60 }          & {7.13}    & 2.40             \\ \cline{2-6} 
    	                       & {0.1} & {595}        & {1959.80}   & {9.22} & 0.58\\ \cline{2-6} 
		\multirow{-3}{*}{{10}} & {1}   & {560}           & {1865.38}          & {7.32}    & 1.31            \\ \hline
		\multicolumn{2}{|c|}{{Adaptive SIS}}         & {6905}       & {12656.52}  & {9.96} & 6.28\\ \hline
		\multicolumn{2}{|c|}{{Adaptive BDF2}}        & {6896}       & {12549.01}   & {9.99} & 5.96\\ \hline
		\multicolumn{2}{|c|}{{Adaptive SAV}}        & {1276}       & {11406.12}   & {9.94}  & 6.11\\ \hline
		\multicolumn{2}{|c|}{{Adaptive S-SAV}}        & {2737}       & {20773.66}   & {9.99}  & 5.94\\ \hline
	\end{tabular}
\end{table}

\begin{figure}[!htbp]
	\centering
	\centerline{\includegraphics[scale=0.09]{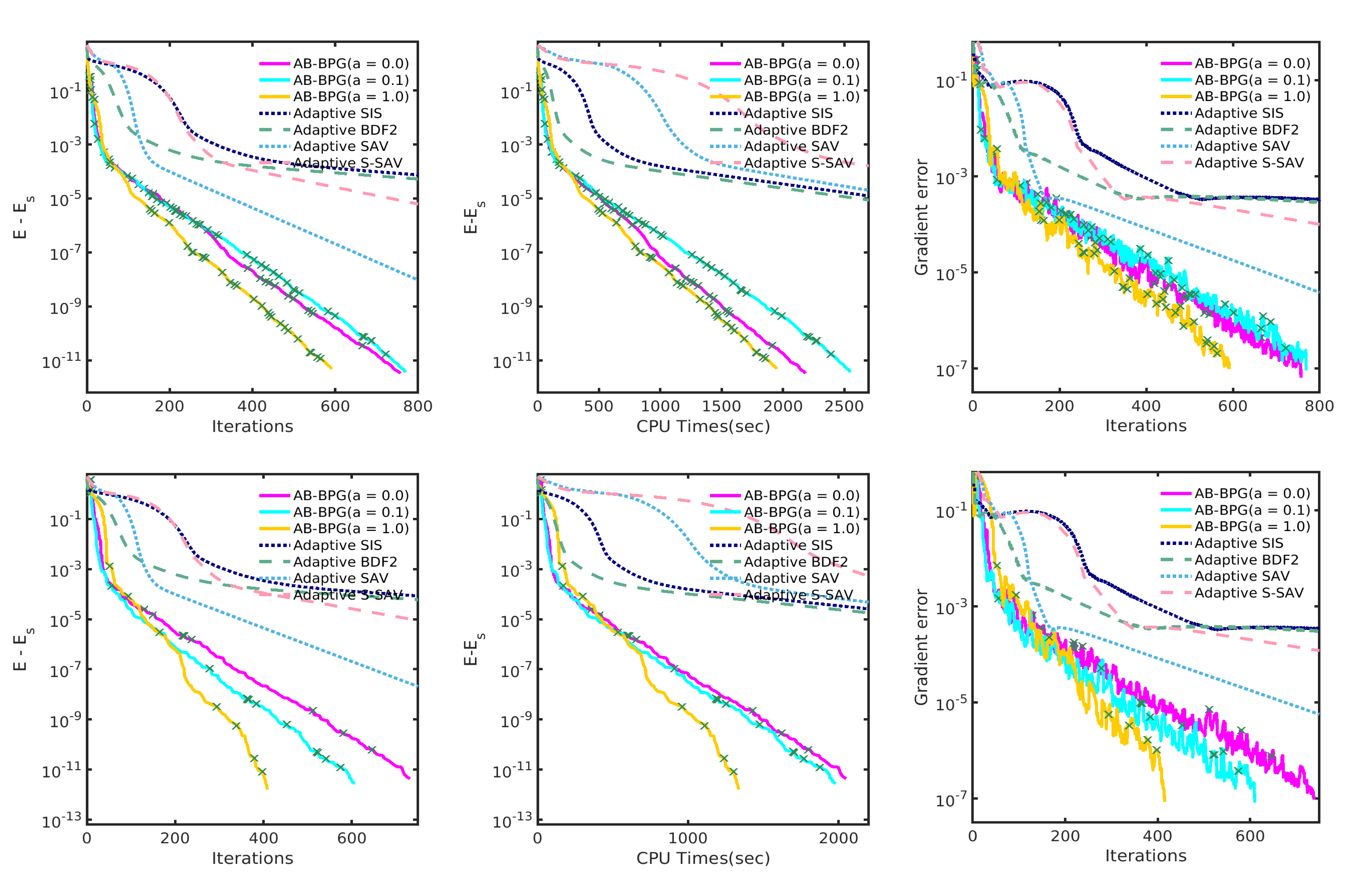}}
	%	\begin{minipage}[t]{0.10\linewidth}
	%		\centerline{\includegraphics[scale=0.25]{}}
	%	\end{minipage}
	%	\hspace{0.2\linewidth}
	%	\begin{minipage}[t]{0.10\linewidth}
	%		\centerline{\includegraphics[scale=0.25]{}}
	%	\end{minipage}
	%	\hspace{0.2\linewidth}
	%	\begin{minipage}[t]{0.10\linewidth}
	%		\centerline{\includegraphics[scale=0.25]{}}
	%	\end{minipage}
	%	
	%	\begin{minipage}[t]{0.10\linewidth}
	%		\centerline{\includegraphics[scale=0.25]{}}
	%	\end{minipage}
	%	\hspace{0.2\linewidth}
	%	\begin{minipage}[t]{0.10\linewidth}
	%		\centerline{\includegraphics[scale=0.25]{}}
	%	\end{minipage}
	%	\hspace{0.2\linewidth}
	%	\begin{minipage}[t]{0.10\linewidth}
	%		\centerline{\includegraphics[scale=0.25]{}}
	%	\end{minipage}
	
	\caption{
		Numerical behaviors of the AB-BPG, adaptive SIS, adaptive BDF2, adaptive SAV and adaptive S-SAV algorithms for computing ternary sigma phase.
		\textbf{First row}: $ M = 0 $; \textbf{Second row}: $M = 20 $;
		\textbf{Left column}: Relative energy over iterations; \textbf{Middle column}:
		Relative energy over CPU time; \textbf{Right}: Gradient error over iterations;
		The green $ \times $s mark where restarts occurred. }
	\label{fig:Sigma:comparison}
\end{figure}

\subsection{ Quinary component systems} 

The last multicomponent system considered in this paper is the five component
CMSH model. 
We take a two-dimensional chessboard-shaped tiling phase and three-dimensional body-centered cubic (BCC) spherical structure to examine AB-BPG approaches' performance.  
%We take a three-dimensional body-centered cubic (BCC) spherical structure to examine AB-BPG approaches' performance.  

\subsubsection{Chessboard-shaped tiling}
When computing the chessboard-shaped tiling, the parameters in five-component
CMSH model are given in Table \ref{tab:chessboardparas}. The corresponding computational
domain in physical space is $\Omega = [0,2\pi)^2$
and $1024\times 1024$ plane wave functions are used to discretize the
computational domain. The initial solution of $ j $-th component  is 
\begin{align}
\phi_j(\br) = \sum_{\bh\in\Lambda_0^j} \hphi(\bh)
e^{i \bh^\top \br},
~~\br\in\Omega,
%	\label{}
\end{align}
where initial lattice points set $\Lambda_0^j\subset\bbZ^2$ can be found in the
Table \ref{tab:chessboardinitial} only on which the Fourier coefficients $\hphi(\bh)$
located are nonzero.  The convergent stationary morphology is given in
Figure \ref{fig:chessboard}.

\begin{table}[!htbp]
	\centering
	\caption{The non-zero model parameters when computing quinary chessboard-shaped tiling.}
	\label{tab:chessboardparas}
	\begin{tabular}{|c|}
		\hline
		\begin{tabular}[c]{@{}c@{}}$ c = 10 $, $ q_1 = q_2 = \cdots = q_5 =1$, \\$ \tau_{3,0,0,0,0} = \tau_{0,3,0,0,0} = \tau_{0,0,3,0,0} =\tau_{0,0,0,3,0} =\tau_{0,0,0,0,3} =-0.10 $,\\
			$ \tau_{4,0,0,0,0} =\tau_{0,4,0,0,0} = \tau_{0,0,4,0,0} = \tau_{0,0,0,4,0} = \tau_{0,0,0,0,4}  = 0.10 $,\\
			$ \tau_{1,0,1,0,0} = -0.70$, $ \tau_{0,1,0,1,1} = 0.05 $,  $ \tau_{1,1,0,0,1} = -0.12$, $ \tau_{0,1,0,1,0} = -0.44$.
		\end{tabular} \\ \hline
	\end{tabular}
\end{table}

\begin{table}[!htbp]
	\centering
	\caption{ The initial lattice points 
		of each component when computing quinary chessboard-shaped tiling.}
	\begin{tabular}{|c|c|c|c|c|c|}
		\hline
		{} & {$\phi_1$}    & {$\phi_2$}     & {$\phi_3$}    & {$\phi_4$}     & {$\phi_5$} \\ \hline
		{$ \Lambda_0^j $}            & {$(\pm 1,0)$} & {$(0, \pm 1)$} & {$(\pm 2,0)$} & {$(0, \pm 2)$} & {$(0, 0)$} \\ \hline
	\end{tabular}
	\label{tab:chessboardinitial}
\end{table}

\begin{figure}[!htbp]
	\centering
	\centerline{\includegraphics[scale=0.15]{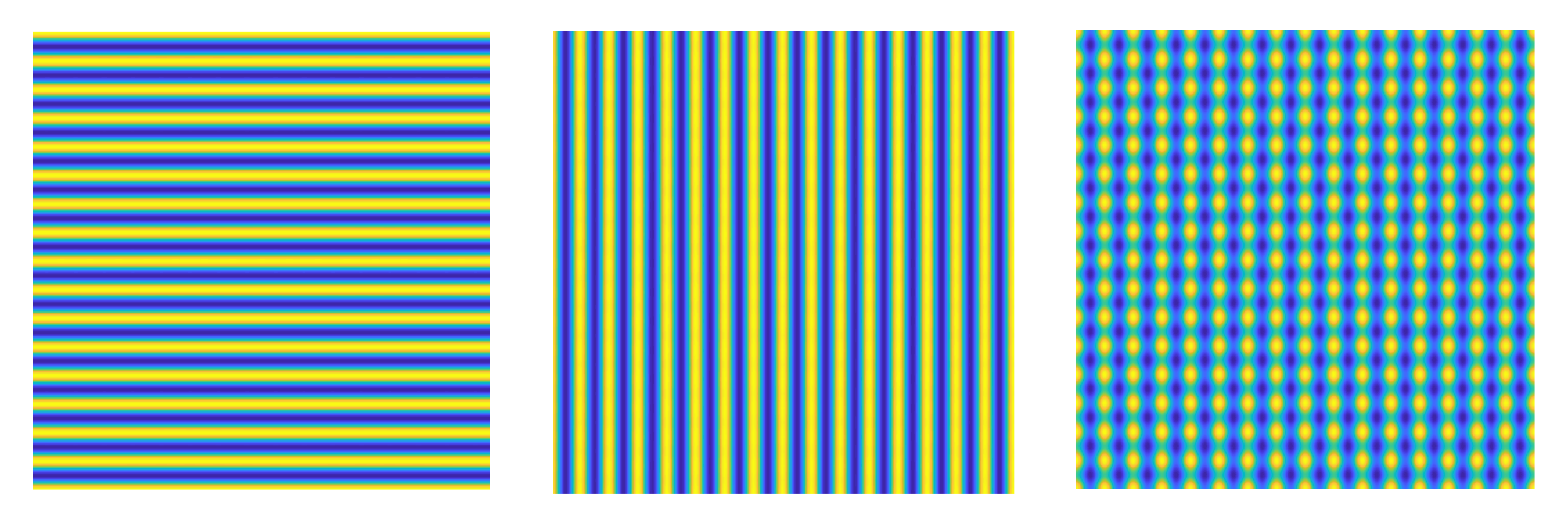}}
	\caption{The stationary  chessboard-shaped phase in quinary CMSH model. 
		The morphologies of $ \phi_1,\phi_3$ (\textbf{Left}), $\phi_2,\phi_4$
		(\textbf{Middle}) and $ \phi_5$ (\textbf{Right}). }
	\label{fig:chessboard}
\end{figure}

Table \ref{tab:chessboardresl} presents the numerical results of the AB-BPG, adaptive SIS, adaptive BDF2, adaptive SAV and adaptive S-SAV methods. 
The scalar auxiliary parameter $C$ of adaptive SAV scheme is set to $
10^{10} $ and the parameters in formula \eqref{eq:adpstep2} 
are taken as $\alpha_{\min} = 10^{-5}$, $\alpha_{\max} = 0.7$, $\rho = 0.9$, $\mathrm{tol} = 10^{-3}$.
The reference energy value $E_s=-0.57163687783216 $ is obtained 
via semi-implicit scheme by using $2048\times 2048$ plane wave functions.
Correspondingly, Figure \ref{fig:chessboard:comparison} presents the iteration process
including the relative energy difference, the CPU times and the gradient error against iterations, respectively.  
These results demonstrate the superiority of the AB-BPG algorithms over the adaptive SIS, adaptive BDF2 and adaptive SAV methods. As Table \ref{tab:chessboardresl} shows, the AB-BPG
method with $M=0$, $a=0$ has the best performance.
%Achieving the prescribed error of $10^{-7}$, the least CUP time spent by our methods can be over $118.5$ times faster than the adaptive SIS
% method, over $109.5$ times than the adaptive BDF2 scheme, and over $ 28.9 $ times than the
% adaptive SAV approach. 
Even though the AB-BPG method costs
much time as the adaptive SIS and adaptive BDF2 approaches per iteration due to the linear search technique, its adaptive step size compensates the extra work converging to $10^{-7}$, almost $0.8\%$, $0.9\%$, 
$3.5\%$ and $0.4\%$ of CPU time spent by the adaptive SIS, adaptive BDF2, adaptive SAV methods, 
and adaptive S-SAV, respectively.

\begin{table}[!htbp]
	\centering
	\caption{Numerical results of computing the quinary chessboard-shape tiling}
	\label{tab:chessboardresl}
	\begin{tabular}{|c|c|c|c|c|c|}
		\hline
		$M$                 & $a$& Iterations& CPU Time (s)& Gradient error  ($10^{-8}$)        & $|E-E_s|$ ($10^{-14}$)    \\ \hline
		                    & 0  & \textbf{111}       & \textbf{75.56}   & 6.97 & 2.90\\ \cline{2-6} 
		                    & 0.1& {142}       & 101.52  &5.44 & 1.48\\ \cline{2-6} 
		\multirow{-3}{*}{0} & 1& {204}      & {145.30} & 6.60 & 1.19\\ \hline 
		                    & 0  & 193         & 149.67         & 6.28	         & 0.46              \\ \cline{2-6} 
		                    & 0.1& 174     & 131.11  &9.65 & 1.38\\ \cline{2-6} 
		\multirow{-3}{*}{5} & 1& 239     &171.46  & 9.91    & 1.72\\ \hline 
		                    & 0   & 186         & 120.03   &8.34    & 2.62\\ \cline{2-6} 
		                     & 0.1   & 188         & 135.34   &6.56 & 0.84\\ \cline{2-6} 
		\multirow{-3}{*}{10}& 1 & 253       & 186.38   & 5.37   & 0.52\\ \hline 
		\multicolumn{2}{|c|}{{Adaptive SIS} }         & 18208      & 8957.85  & 282.01    & 4617.51\\ \hline
		\multicolumn{2}{|c|}{{Adaptive BDF2}}         & 17723      & 8277.81 & 346.25 & 7073.17\\ \hline
		\multicolumn{2}{|c|}{{Adaptive SAV}}         & 927      & 2184.01  &  9.92  & 94.23\\ \hline
		\multicolumn{2}{|c|}{{Adaptive S-SAV}}         & 6655      & 16131.94  &  9.99  & 94.51\\ \hline
	\end{tabular}
\end{table}

\begin{figure}[!htbp]
	\centering
	\centerline{\includegraphics[scale=0.09]{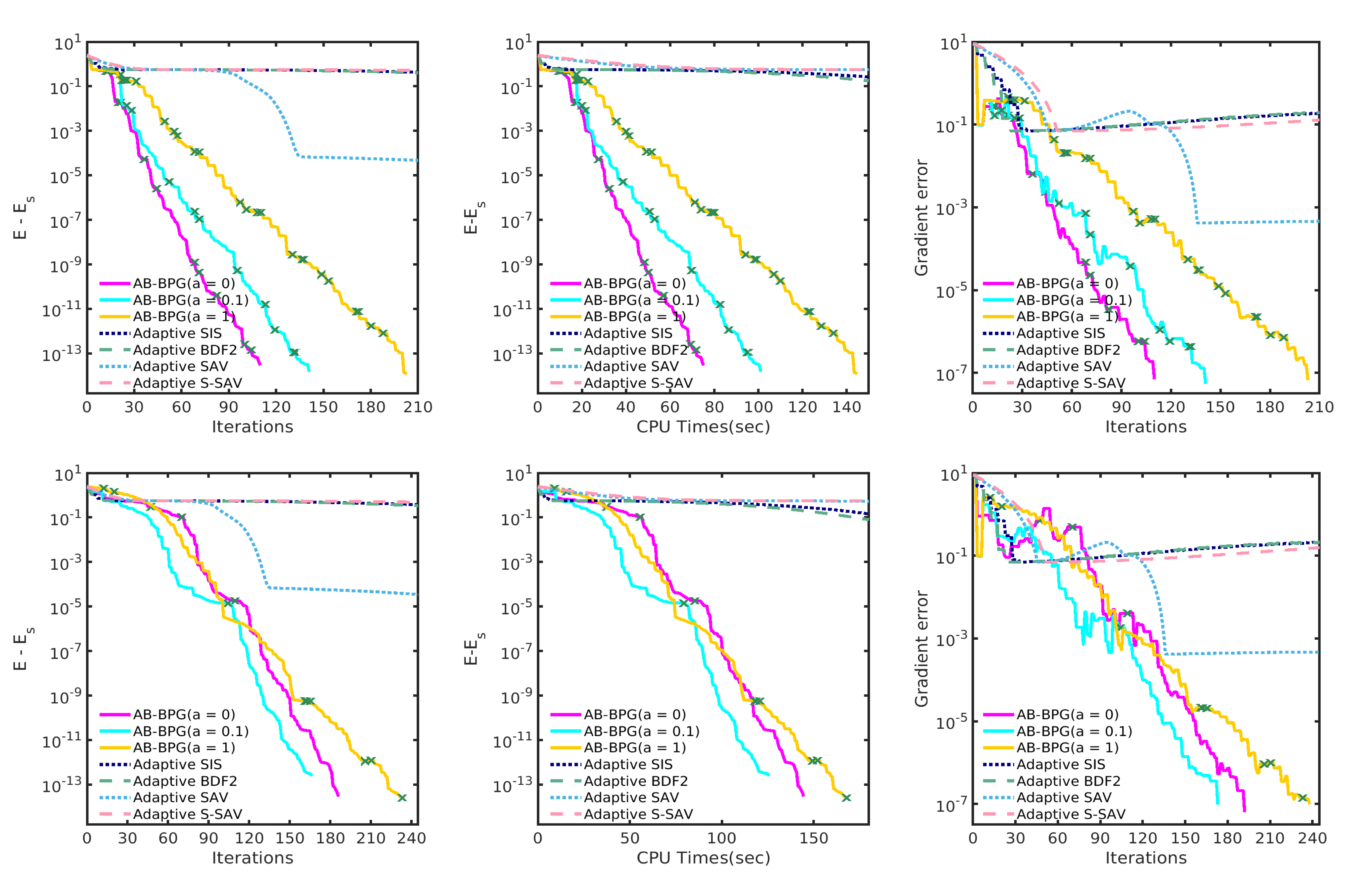}}
	%	\begin{minipage}[t]{0.10\linewidth}
	%		\centerline{\includegraphics[scale=0.25]{}}
	%	\end{minipage}
	%	\hspace{0.2\linewidth}
	%	\begin{minipage}[t]{0.10\linewidth}
	%		\centerline{\includegraphics[scale=0.25]{}}
	%	\end{minipage}
	%	\hspace{0.2\linewidth}
	%	\begin{minipage}[t]{0.10\linewidth}
	%		\centerline{\includegraphics[scale=0.25]{}}
	%	\end{minipage}
	%	\hspace{0.4\linewidth}
	%	
	%	\begin{minipage}[t]{0.10\linewidth}
	%		\centerline{\includegraphics[scale=0.25]{}}
	%	\end{minipage}
	%	\hspace{0.2\linewidth}
	%	\begin{minipage}[t]{0.10\linewidth}
	%		\centerline{\includegraphics[scale=0.25]{}}
	%	\end{minipage}
	%	\hspace{0.2\linewidth}
	%	\begin{minipage}[t]{0.10\linewidth}
	%		\centerline{\includegraphics[scale=0.25]{}}
	%	\end{minipage}
	%	\hspace{0.4\linewidth}
	
	\caption{Numerical behaviors of the AB-BPG and adaptive SIS, adaptive BDF2, adaptive SAV and adaptive S-SAV for computing the quinary chessboard-shape tiling. \textbf{First row}: $ M = 0 $; \textbf{Second row}: $ M = 5 $;
		\textbf{Left}: Relative energy over iterations; \textbf{Middle}:
		Relative energy over CPU times; \textbf{Right}: Gradient error over iterations;
		The green $ \times $s mark where restarts occurred. }
	\label{fig:chessboard:comparison}
\end{figure}

\subsubsection{BCC}
As last, we consider the quinary BCC structure. The parameters in the five
component CMSH model are given in Table \ref{tab:BCCparas}. A bounded domain
$[0,2\sqrt{2}\pi)^3$ is used as the computational box and $128^3$ plane wave
functions are employed to compute the BCC spherical phase. The initial
values can be found in\,\cite{jiang2013discovery}. Figure \ref{fig:BCCsPhase}
shows the convergent stationary solution of different order parameter, which are
all BCC phases but with different periodicity.

\begin{table}[!htbp]
	\centering
	\caption{The non-zero model parameters used in computing quinary BCC spherical structure.}
	\label{tab:BCCparas}
	\begin{tabular}{|c|}
		\hline
		\begin{tabular}[c]{@{}c@{}}$ c = 1 $, $ q_1=1$, $ q_2 =1.5$, $ q_3 = 2$, $q_4 = 2.5$, $q_5 =3$, \\$ \tau_{3,0,0,0,0} = -0.1$, $ \tau_{0,3,0,0,0} = -0.6 $, $ \tau_{0,0,3,0,0} = -0.4 $, $\tau_{0,0,0,3,0}=-0.2$, $\tau_{0,0,0,0,3} =-0.1 $,\\
			$ \tau_{4,0,0,0,0} = \tau_{0,0,4,0,0}=\tau_{0,4,0,0,0} =\tau_{0,0,0,4,0} =\tau_{0,0,0,0,4}  = 0.1$,\\
			$ \tau_{1,0,1,0,0} = 0.4$, $ \tau_{0,1,0,1,0} = 0.3$, $ \tau_{0,1,1,1,0} = -0.2 $,  $ \tau_{1,1,0,0,1} = 0.8$.
		\end{tabular} \\ \hline
	\end{tabular}
\end{table}

Table \ref{tab:BCCresel} and Figure \ref{fig:BCC:comparison} compare the numerical behaviors of
AB-BPG, adaptive SIS, adaptive BDF2, adaptive SAV and adaptive S-SAV methods. The scalar auxiliary parameter
$C$ of two SAV scheme is set to $ 10^{10} $. For adaptive SAV sheme, the parameters in formula \eqref{eq:adpstep2} 
are taken as $\alpha_{\min} = 10^{-5}$, $\alpha_{\max} = 0.2$, $\rho = 0.9$, $\mathrm{tol} = 10^{-3}$. 
The reference energy $ E_s = -1.22314417498279 $ is obtained via semi-implicit scheme by using
$256^3 $ plane wave functions.
Again, for this case, the proposed AB-BPG methods are still superior to the compared algorithms. More precisely,  the best performance of AB-BPG ($
M = 0 $, $ a = 0 $) spends $182$ iterations to achieve the prescribed error
which converges $40$ times faster than the adaptive SIS, $30$ times than the adaptive BDF2
method, $5.1$ times than the adaptive SAV scheme and $10.1$ times than the adaptive S-SAV scheme. 
From the cost of CPU times, the AB-BPG algorithm with $M=0$ takes $247.22$
seconds to achieve
an accuracy of $10^{-7}$ in the gradient error, almost $3.8\%$, $5.1\%$, $
7.0\%$ and $2.8\%$ of the CPU time used by the adaptive SIS, adaptive BDF2,
adaptive SAV and adaptive S-SAV schemes.

\begin{figure}[!htbp]
	\centering	
	%	\begin{minipage}[t]{0.13\linewidth}
	%		\centerline{\includegraphics[scale=0.15]{}}
	%	\end{minipage}
	%	\hspace{0.05\linewidth}
	%	\begin{minipage}[t]{0.13\linewidth}
	%		\centerline{\includegraphics[scale=0.15]{}}
	%	\end{minipage}
	%	\hspace{0.05\linewidth}
	%	\begin{minipage}[t]{0.13\linewidth}
	%		\centerline{\includegraphics[scale=0.15]{}}
	%	\end{minipage}
	%	\hspace{0.05\linewidth}
	%	\begin{minipage}[t]{0.13\linewidth}
	%		\centerline{\includegraphics[scale=0.15]{}}
	%	\end{minipage}
	%	\hspace{0.05\linewidth}
	%	\begin{minipage}[t]{0.13\linewidth}
	\centerline{\includegraphics[scale=0.3]{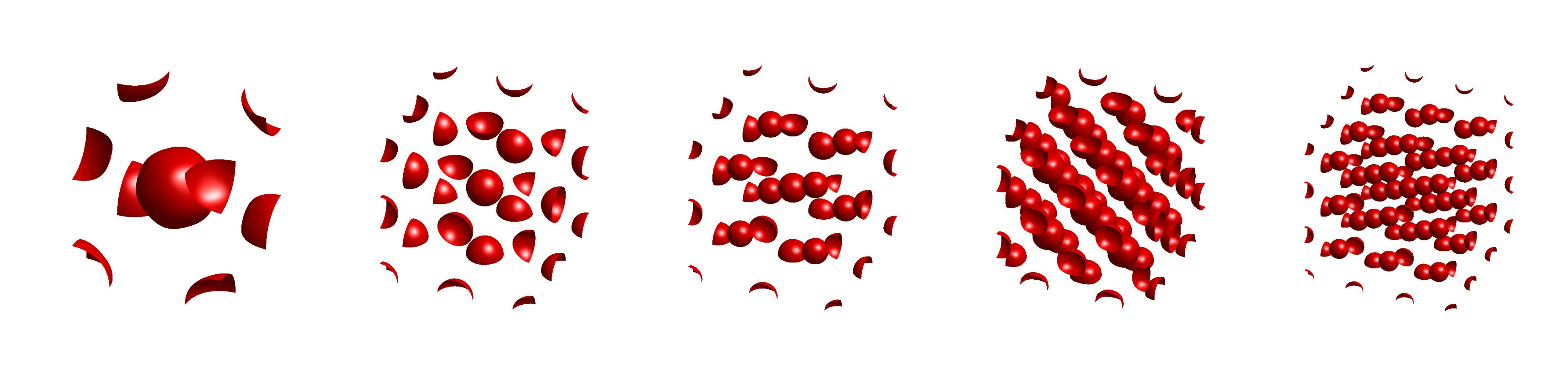}}
	%	\end{minipage}
	\caption{The stationary BCC phase in quinary CMSH model. The subfigures from left to right are morphology of $ \phi_1$  to $ \phi_5 $.}
	\label{fig:BCCsPhase}
\end{figure}

\begin{table}[!htbp]
	\centering
	\caption{Numerical results of computing the quinary BCC structure}
	\label{tab:BCCresel}
	\begin{tabular}{|c|c|c|c|c|c|}
		\hline
		{$M$}                  & {$a$} & {Iterations} & {CPU Times} & {Gradient error ($10^{-8}$)}          & $|E-E_s|$ ($10^{-13}$)                \\ \hline
		{}                     & {0}   & {\textbf{182}}        & {\textbf{247.22}}    & {8.17}  & 1.99\\ \cline{2-6} 
		{}                     & 0.1   & {212}                 & 309.12               & 9.58    & 1.71      \\ \cline{2-6} 
		\multirow{-3}{*}{{0}}  & {1}   & {232}                 & {344.20}             & {6.66}  & 1.38\\\hline
		{}                     & {0}   & {259}                 & {343.63}             & {3.96}  & 1.37 \\ \cline{2-6} 
		{}                     & 0.1   & 262                   & 381.79               & 5.70    & 2.14\\ \cline{2-6} 
		\multirow{-3}{*}{{5}}  & {1}   & {347}                 & {515.89}             & {5.84} & 1.28\\ \hline
    		{}                 & {0}   & {277}                 & {371.94}             & {5.56} & 1.33\\ \cline{2-6} 
		{}                     & 0.1   & 304                  & 446.41               & 8.04 & 1.40  \\ \cline{2-6} 
		\multirow{-3}{*}{{10}} & {1}   & 353                  & 505.98               & 7.38   & 1.33\\ \hline
		\multicolumn{2}{|c|}{{Adaptive SIS}}         & {7443}        & {6446.39}    & {52.18} & 439.44\\ \hline
		\multicolumn{2}{|c|}{{Adaptive BDF2}}       & {5628}       & {4802.81}   & {71.90}  & 832.20\\ \hline
		\multicolumn{2}{|c|}{{Adaptive SAV}}       & {933}       & {3498.08}   & {9.94} & 17.20 \\ \hline
		\multicolumn{2}{|c|}{{Adaptive S-SAV}}       & {2204}       & {8766.82}   & {9.95} & 17.19 \\ \hline
	\end{tabular}
\end{table}
\begin{figure}[!htbp]
	\centering
	\centerline{\includegraphics[scale=0.09]{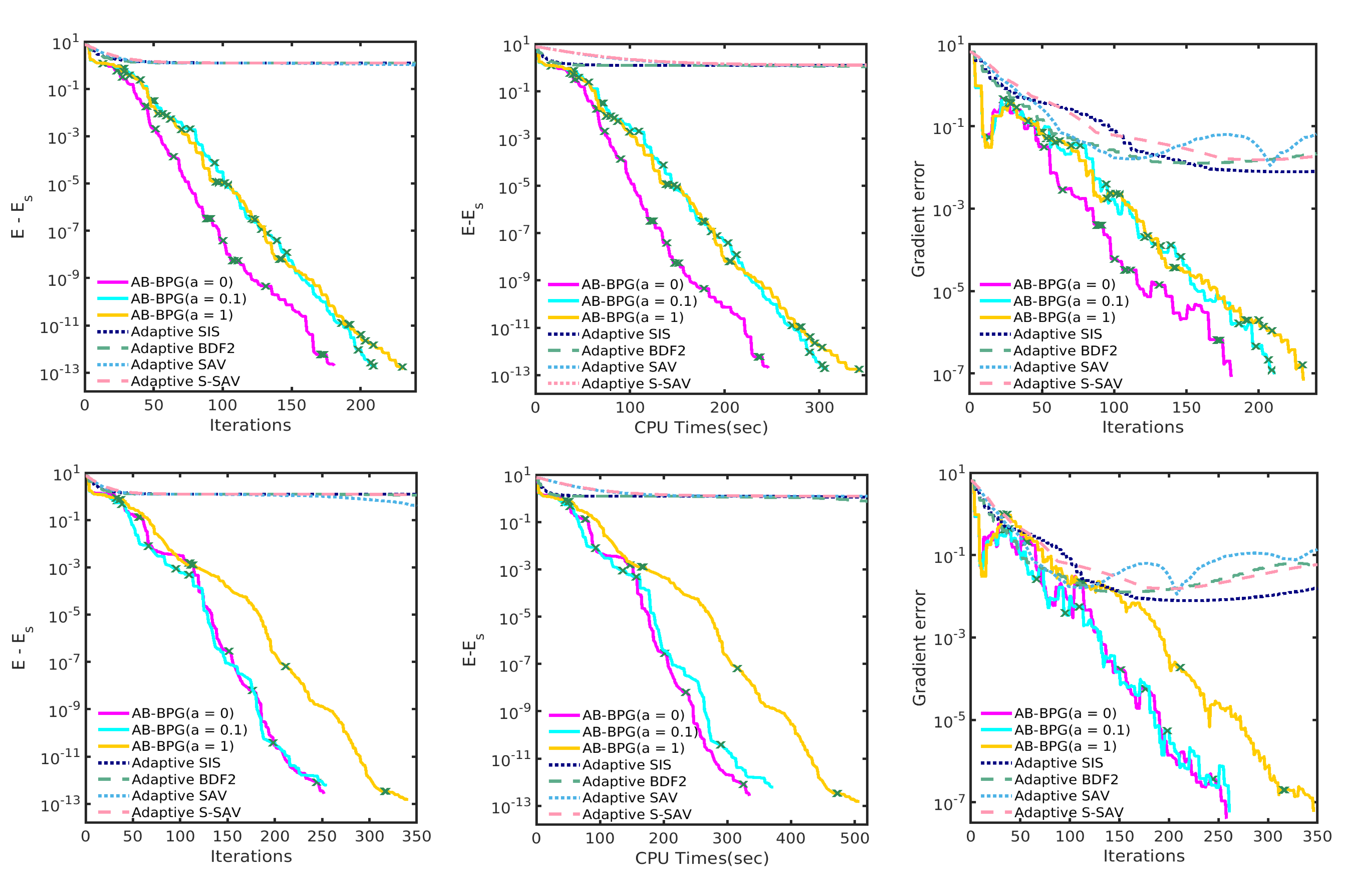}}
	%	\begin{minipage}[t]{0.10\linewidth}
	%		\centerline{\includegraphics[scale=0.25]{}}
	%	\end{minipage}
	%	\hspace{0.2\linewidth}
	%	\begin{minipage}[t]{0.10\linewidth}
	%		\centerline{\includegraphics[scale=0.25]{}}
	%	\end{minipage}
	%	\hspace{0.2\linewidth}
	%	\begin{minipage}[t]{0.10\linewidth}
	%		\centerline{\includegraphics[scale=0.25]{}}
	%	\end{minipage}
	%	
	%	\begin{minipage}[t]{0.10\linewidth}
	%		\centerline{\includegraphics[scale=0.25]{}}
	%	\end{minipage}
	%	\hspace{0.2\linewidth}
	%	\begin{minipage}[t]{0.10\linewidth}
	%		\centerline{\includegraphics[scale=0.25]{}}
	%	\end{minipage}
	%	\hspace{0.2\linewidth}
	%	\begin{minipage}[t]{0.10\linewidth}
	%		\centerline{\includegraphics[scale=0.25]{}}
	%	\end{minipage}
	
	\caption{Numerical behaviors of the AB-BPG, adaptive SIS and adaptive BDF2, adaptive SAV and adaptive S-SAV methods for
		computing the quinary BCC phase. \textbf{First row}: $ M = 0 $; \textbf{Second row}: $ M = 5 $;
		\textbf{Left}: Relative energy over iterations; \textbf{Middle}:
		Relative energy over the CPU times; \textbf{Right}: Gradient error over iterations;
		The green $ \times $s mark where restarts occurred. }
	\label{fig:BCC:comparison}
\end{figure}

% \begin{remark}
%     It is worth mentioning that in this work, we only consider efficient methods for computing stationary states. There are many other interesting directions in multicomponent PFC model, such as physical evolution simulation. It's well-known\,\cite{parikh2014proximal} that the classical proximal gradient method is exactly the semi-implicit scheme for gradient flows, which achieve satisfactory performance in simulating physical evolution. Hence, we hope that our methods could be generalized to study the physical evolution and we will explore this direction in our future work.
% \end{remark}

\section{Conclusion}
In this paper, an AB-BPG algorithm is proposed to compute the
stationary states of multicomponent phase-field crystal model with mass conservation.
Compared to most existing methods, the new approaches consider the block structure of multicomponent models, and the update manner can be chosen deterministically or randomly.
Using modern optimization methods, including the inertia acceleration approach, the restart technique, and the line search method, the proposed AB-BPG method has the general dissipation property with efficient implementation.
Moreover, with the help of the Bregman divergence, it is proved that the generated sequence converges to a stationary point without the requirement of the global Lipschitz assumption.
Extensive numerical experiments on computing stationary periodic crystals and
quasicrystals in the binary, ternary, and quinary coupled-mode Swift-Hohenberg model have shown a significant acceleration over many existing methods.

In this paper, we have implemented our algorithms for computing multicomponent model with polynomial type potentials $F[\{\phi_{j}\}_{j=1}^{s}] $. 
In fact, the proposed methods can be applied to deal with non-polynomial type potentials by choosing suitable $\{h_j\}$ to make our Assumption 
\ref{assum2} satisfied. This will soon be found in our future work. Besides, although our methods have shown efficient
performance in our numerical experiments, we want to theoretically prove the convergence rates in the future. It is worth mentioning that in this work, we only consider efficient methods for computing stationary states.  Besides, the physical evolution simulation is an important topic in PFC  models.   It is known that the classical proximal gradient method is exactly the semi-implicit scheme for gradient flows. From this perspective, we will develop our method to simulate the whole evolution of process of crystal growth in the further.
% and we will explore this direction in our future work. 

\section*{Acknowledgments}
This work is supported by the National Natural Science Foundation of China (11771368, 11901338). 
CLB is partially supported by Tsinghua University Initiative Scientific Research Program.
KJ is partially supported by the Key Project (19A500) of the Education Department of 
Hunan Province of China and the Innovation Foundation of Qian Xuesen Laboratory of Space Technology.

\bibliographystyle{plain}
\bibliography{references.bib}

\section*{Appendix A: Projection method}
	The projection method is a general framework to
	study the periodic and quasiperiodic crystals\,\cite{jiang2014numerical, Jiang2018numerical}. 
	Each of the $d$-dimensional periodic system can be described by a Bravais lattice
	\begin{align*}
	\mathcal{R}_d = \{\bA_d \bn_d, ~\bn_d\in\bbZ^d\},
	\end{align*}
	where $\bA_d\in\bbR^{d\times d}$ is invertible. 
	The fundamental domain or so-called unit cell of the periodic system is 
	\begin{align*}
	\Omega = \{\bA_d \bm{\zeta}, ~ \bm{\zeta}\in\bbR^d,~ \zeta_j \in [0,1),~
	j=1,\dots,d  \}.
	\end{align*}
	The associated primitive reciprocal vectors, $\bB_d=(\bb_1, \dots, \bb_d)$,
	$\bb_j \in \bbR^d$, $j=1,\dots,d$, satisfy the dual relationship 
	$\bA_d \bB_d^T = \bI_d$ where $\bI_d$ is the $d$-order identify matrix.
	The reciprocal lattice is 
	\begin{align*}
	\mathcal{R}^*_d = \{\bB_d \bh_d, ~ \bh_d\in\bbZ^d\}.
	\end{align*}
	The corresponding periodic function $\psi(\br)$, which has translational
	invariance with respect to the Bravais lattice $\mathcal{R}$, i.e.,
	$\psi(\br)=\psi(\br+\mathcal{R})$, has the following Fourier expansion 
	\begin{align}
	\psi(\br) = \sum_{\bh\in\bbZ^d} \hpsi(\bh) e^{i (\bB \bh)^T \br},
	\label{}
	\end{align}
	where 
	\begin{align}
	\hpsi(\bh) =\bbint \psi(\br)e^{-(\bB\bh)^T \br}d\br, \quad \br \in
	\Omega.
	\end{align}
	
	Quasiperiodic functions, or more general almost periodic functions, are an
	extension of periodic functions. The definition of quasiperiodic functions is given
	as follows\,\cite{Jiang2018numerical}:
	\begin{definition}[Quasiperiodic function]
		A $d$-dimensional continuous complex-valued function $f(\br)$ is
		quasiperiodic, if there exists a $n$-dimensional periodic function $F(\br_s)$,
		$\br_s\in \bbR^n$, $n\geqslant d$, such that 
		\begin{align*}
		f(\br) = F(\calP^T \br),
		\end{align*}
		where $\calP\in\bbR^{d\times n}$ is the projection matrix. The column vectors of
		$\calP$ are rationally independent. 
	\end{definition}
	From the definition, one can find that the $d$-dimensional quasiperiodic system is a
	$d$-dimensional subspace of a $n$-dimensional periodic structure. As describes above, the
	Fourier series of $F(\br_s)$ is 
	\begin{align}
	F(\br_s) = \sum_{\bh\in\bbZ^n} \hat{F}(\bh) e^{i (\bB \bh)^T \br_s},
	\label{}
	\end{align}
	where $\bB\in \bbR^{n\times n}$ is associated to the periodicity of the
	$n$-dimensional periodic system. 
	$\hat{F}(\bh) =\bbint F(\br_s)e^{-(\bB\bh)^T \br_s}\,d\br_s, \, \br_s \in
	\Omega_s $,
	$\Omega_s = \{\bB^{-T} \bm{\alpha}, ~ \bm{\alpha}\in\bbR^n, ~\alpha_j\in [0,1),~
	j=1,\ldots,n\}$. 
	Let $\hat{F}(\bh)$ be $\hat{f}(\bh)$, the projection method for a $d$-dimensional
	quasiperiodic function over $\bbR^d$ can be written as \cite{jiang2014numerical}
	\begin{align}
	f(\br) = \sum_{\bh\in\bbZ^n}\hat{f}(\bh)e^{i(\calP\bB\bh)^\top  \br},\quad
	\br \in\bbR^d.
	\label{}
	\end{align}
	If consider periodic crystals, the projection matrix becomes the $d$-order
	identity matrix, then the projection reduces to the common Fourier spectral method.
	
	Similarly, the order parameter in multicomponent systems can be expanded as follows
	\begin{align}\label{eq:pm}
	\phi_j(\br) =  \sum_{\bh\in\bbZ^n}\hphi_j(\bh)e^{i(\calP \bB\bh)^\top \br},\quad j=1,2,\cdots,s.
	\end{align}
	In numerical implementation, we truncate the Fourier coefficients to satisfy 
	\begin{equation*}
	X_j = \{\{\hphi_j(\bh)\}_{\bh\in\bbZ^n}:	\hphi_j(\bh) = 0, ~\forall\, |h_l|> \dfrac{N_{l,j}}{2}, ~ l = 1,2,\cdots,n\}.
	\end{equation*}
	where $N_{l,j}$ is chosen to be even for convenience.
	Let $ \bhphi_j = (\hphi_{1,j},\hphi_{2,j},\cdots,\hphi_{\bN_j,j})^\top\in\bbC^{N_j} $ with $ \bN_j  = \prod_{l=1}^n(N_{l,j}+1)$.
	Let $ \hPhi = \{\bhphi_j\}_{j =1}^s\in\bbC^{\bN} $ with $\bN = \sum_{j=1}^s\bN_j$. Using the projection method discretization, the energy functional \eqref{defined_GandF} reduce to
	\begin{align*}
	G_{\bh,j}(\bhphi_j) & = \frac{1}{2}\sum_{\bh_{j,1}  + \bh_{j,2} = 0}
	\left[q_j^2-(\mathcal{P}\mathbf{B}\bh)^\top (\mathcal{P}\mathbf{B}\bh)\right]^2
	\hphi_j(\bh_{j,1})\hphi_j(\bh_{j,2}),\quad j = 1,2,\dots,s,\\
	F_{\bh}(\hPhi) & = \sum_{\calI_{s,n}}\tau_{i_1,i_2,\cdots,i_s}\sum_{\sum_{j,k}\bh_{j,k} = 0}\prod_{j=1}^s\left(\prod_{k=1}^{i_j} \hphi_j(\bh_{j,k})\right),
	%  \\
	% 		\sum_{\calI_{s,n}}\tau_{i_1,i_2,\cdots,i_s}\prod_{j=1}^{s}\phi_j^{i_j}(\br)
	\label{eq:Energy_finite}
	\end{align*}
	where $\bh_{j, k}\in\bbZ^n$, $\hphi_j\in X_j$, $j=1,2,\dots,s$. 
	For simplicity, we omit the subscription $\bh$ in $G_{\bh,j}$ and $F_{\bh}$ in the following context.
	Then the discretized energy functional can be stated as
	\begin{equation}\label{Discret}
	\begin{split}
	E(\{\bhphi_j\}_{j=1}^s) = \sum_{j= 1}^{s}G_j(\bhphi_j) + F(\{\bhphi_j\}_{j=1}^s),
	\end{split}		
	\end{equation}
	where $ G_j(\bhphi_j) = \dfrac{1}{2}\langle \bhphi_j, \calD_j \bhphi_j\rangle $ and $ \calD_j\in\bbC^{\bN_j\times \bN_j} $ is a diagonal matrix with nonnegative entries
	\begin{equation}\label{def:Matrix_D}
	(\calD_j)_{\bh} =  [q_j^2-(\calP\bB\bh)^\top(\calP\bB\bh)]^2 \text{ with } \bh \in\bbZ^n \text{ and }\hphi_j(\bh)\in X_j.
	\end{equation}
	$ F(\{\bhphi_j\}_{j=1}^s) $ are $n$-dimensional convolutions in the reciprocal
	space. In summary, the discretized version of \eqref{infpro} has the form
	\begin{equation}
	\begin{split}
	\min_{\hPhi} ~  E(\hPhi) = \sum_{j= 1}^{s}G_j(\bhphi_j) + F(\hPhi),
	\quad\mathrm{s.t.}\quad  e_1^\top \bhphi_j = 0,\quad j = 1,2,\cdots,s.
	\end{split}
	\end{equation}

\section*{Appendix B: Proof of Theorem \ref{thm:SeqConvergence} }
Before we prove the convergent property, we first present a useful lemma for our analysis.
\begin{lemma}[Uniformized Kurdyka-Lojasiewicz property\,\cite{bolte2014proximal}.] \label{{lem:KL}}
	Let $\mathcal{C}$ be a compact set and
	$E$ defined in \eqref{finitepro} be bounded below. Assume that $E$ is constant on $\mathcal{C}$. Then, there exist $\epsilon>0$, $\eta>0$, and $\psi\in\Psi_\eta$ such that for all $\bar u\in\mathcal{C}$ and all $u\in\Gamma_\eta(\bar u,\epsilon)$, 
	one has,
	\begin{equation}\label{UKL}
	\psi^{'}(E(u)-E(\bar u))\dist(\vzero,\partial E(u))\geq 1,
	\end{equation}
	where $\Psi_\eta =\{\psi\in C[0,\eta)\cap C^1(0,\eta) \text{and } \psi \text{is concave}, \psi(0)=0, \psi^{'}>0 \text{on } (0,\eta)\}$ and $\Gamma_\eta(x,\epsilon) = \{y|\|x-y\|\leq \epsilon, E(x)<E(y)<E(x)+\eta\}$.
	\label{lemma:ukl}
\end{lemma}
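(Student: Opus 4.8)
The plan is to derive the uniform inequality \eqref{UKL} from the pointwise KL property (Definition \ref{assum4}) by a standard compactness argument, exploiting that $E$ takes a single value on $\mathcal{C}$; write $E\equiv E^*$ on $\mathcal{C}$. For each $\bar u\in\mathcal{C}$ the pointwise KL property supplies a radius $\epsilon_{\bar u}>0$, a height $\eta_{\bar u}>0$, and a desingularizing function $\psi_{\bar u}\in\Psi_{\eta_{\bar u}}$ such that \eqref{UKL}, with $\psi_{\bar u}$ and base point $\bar u$, holds on $B(\bar u,\epsilon_{\bar u})\cap\{E^*<E<E^*+\eta_{\bar u}\}$. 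The goal is to replace this point-dependent data by a single triple $(\epsilon,\eta,\psi)$ valid simultaneously for every $\bar u\in\mathcal{C}$.

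First I would pass to a finite subcover: the open balls $\{B(\bar u,\epsilon_{\bar u})\}_{\bar u\in\mathcal{C}}$ cover the compact set $\mathcal{C}$, so finitely many of them, say $B(u_i,\epsilon_i)$ for $i=1,\dots,p$, already cover $\mathcal{C}$, with associated heights $\eta_i$ and functions $\psi_i\in\Psi_{\eta_i}$. Set $\eta:=\min_{1\le i\le p}\eta_i>0$. Because $U:=\bigcup_{i=1}^p B(u_i,\epsilon_i)$ is open and contains the compact set $\mathcal{C}$, its closed complement lies at a positive distance from $\mathcal{C}$, so there is $\epsilon>0$ with $\{u:\dist(u,\mathcal{C})\le\epsilon\}\subseteq U$. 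With these choices, fix any $\bar u\in\mathcal{C}$ and any $u\in\Gamma_\eta(\bar u,\epsilon)$; then $\dist(u,\mathcal{C})\le\|u-\bar u\|\le\epsilon$, hence $u\in B(u_i,\epsilon_i)$ for some $i$. Moreover $E(u_i)=E^*<E(u)<E^*+\eta\le E(u_i)+\eta_i$, so $u$ lies in the region on which the pointwise inequality at $u_i$ is valid, giving $\psi_i'(E(u)-E^*)\dist(\vzero,\partial E(u))\ge 1$.

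It remains to merge the finitely many $\psi_i$ into one $\psi\in\Psi_\eta$, which I regard as the only real subtlety. I would simply take $\psi:=\sum_{i=1}^p\psi_i$ on $[0,\eta)$, each $\psi_i$ being defined there since $\eta\le\eta_i$. Then $\psi$ is continuous on $[0,\eta)$ and $C^1$ on $(0,\eta)$, concave as a sum of concave functions, with $\psi(0)=0$ and $\psi'=\sum_i\psi_i'>0$; hence $\psi\in\Psi_\eta$. Crucially, since every $\psi_j'>0$ we have $\psi'(s)\ge\psi_i'(s)$ for each $i$ and each $s\in(0,\eta)$. Applying this with $s=E(u)-E^*$ to the pointwise bound from the previous step yields $\psi'(E(u)-E^*)\dist(\vzero,\partial E(u))\ge\psi_i'(E(u)-E^*)\dist(\vzero,\partial E(u))\ge 1$, which is exactly \eqref{UKL}. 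Alternatively one could take $\psi':=\max_i\psi_i'$, which is continuous, positive, and non-increasing because each $\psi_i$ is concave, and integrate it; the sum is cleaner and avoids the monotonicity bookkeeping. The main points to get right are precisely this concavity-preserving combination of desingularizing functions and the uniform neighborhood $\epsilon$ extracted from the finite cover; everything else is routine bookkeeping.
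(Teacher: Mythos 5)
Your proof is correct. Note, however, that the paper itself does not really prove this lemma: its ``proof'' consists of the single remark that $E$ satisfies the pointwise KL property on $\mathcal{C}$, together with a citation of \cite{bolte2014proximal}, where the statement appears as the uniformized KL lemma. What you have written is, in substance, the proof of that cited lemma: the finite subcover of the compact set $\mathcal{C}$, the uniform radius $\epsilon$ obtained from the positive distance between $\mathcal{C}$ and the closed complement of the union of balls, the uniform height $\eta=\min_i\eta_i$, and the merged desingularizing function $\psi=\sum_{i=1}^p\psi_i$, whose derivative dominates each $\psi_i'$ so that the pointwise inequalities transfer to $\psi$; each of these steps is sound, and the summation trick is exactly the concavity-preserving combination used in the reference. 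The only hypothesis you use silently is that the pointwise KL property is available at every $\bar u\in\mathcal{C}$, i.e.\ that $\mathcal{C}\subseteq\dom\partial E$; in the setting of \eqref{finitepro} this is harmless because $E$ is finite everywhere (a polynomial bulk term plus quadratics), so $\partial E(\bar u)\neq\emptyset$ at every point, but it deserves a sentence, since the lemma as phrased only assumes that $E$ is constant on the compact set $\mathcal{C}$.
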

\begin{proof}
	The proof is based on the fact that $ E $ satisfies the so-called Kurdyka-Lojasiewicz property on $\mathcal{C}$ \cite{bolte2014proximal}.
\end{proof}

\textbf{Proof of Theorem \ref{thm:SeqConvergence}}
\begin{proof}
	Define two sets $\Omega_2 = \{k\,|\,w_k = 0\}$ and $\Omega_1=\mathbb N\backslash\Omega_2$. Since $ M = 0 $, we know $ m_k\equiv k $. From the restart technique \eqref{restartcri}, the following sufficient decrease property holds	
	\begin{equation}\label{suffDec}
	E(X^k) - E(X^{k+1}) \geq \sigma\|X^k-X^{k+1}\|^2,~\forall k.
	\end{equation}
	Let $ S(X^0) $ be the set of limiting points of the sequence $ \{X^k\} $ starting
	from $ X^0 $. By the boundedness of $  \{X^k\} $ and $ S(X^k) =
	\cap_{n\in\mathbb{N}}\cup_{k\geq n}\{X^k\}$, it follows that $  \{X^k\} $ is a
	non-empty and compact set. From Lemma \ref{lem:lim_E}, we know 
	\begin{align}
	E(X) = E^*,\quad \forall X\in S(X^0).
	\end{align}			
	Then,  for all $  \varepsilon_1,\eta>0 $, there exists $ k_0>0 $ such that
	\begin{align}
	\dist(X^k, S(X^0))\leq \varepsilon_1,\quad E^*<E(X^k)<E^*+\eta, \quad  \forall k>k_0.
	\end{align}
	Applying Lemma \ref{lemma:ukl}, for $ \forall k>k_0 $ we have
	\begin{align}
	\psi'(E(X^k) - E^*)\dist(\vzero,\partial E(X^k))\geq 1.
	\end{align}
	By the convexity of $ \psi $, we have
	\begin{align}
	\psi(E(X^k) - E^*) - \psi(E(X^{k+1}) - E^*)\geq \psi'(E(X^k) - E^*)(E(X^k) - E(X^{k+1})).
	\end{align}
	Denote $ \Delta_{k,k+1} = \psi(E(X^k) - E^*) - \psi(E(X^{k+1}) - E^*) $. From \eqref{suffDec}, we have
	\begin{align}\label{ieq:KL2}
	\Delta_{k,k+1}\geq \dfrac{\sigma\|X^{k+1}-X^k\|^2}{\dist(\vzero,\partial E(X^k))},\quad \forall k> k_0
	\end{align}
	Next, we prove $ \sum_{k=0}^\infty\|X^{k+1}-X^k\| < \infty$. For any $ \varepsilon >0 $, we take $N >\max\{k_0,3T\} $ large enough such that 
	\begin{align}
	\bar{C}\psi(E(X^{N+1}) - E^*)<\varepsilon/2,\\
	\sum_{k=N-3T+2}^{N}\|X^{k+1}-X^{k}\| <\varepsilon/2,
	\end{align}
	where $ \bar{C} =3TC/\sigma  $. The existence of $ N $ is ensured by the fact that $
	\lim\limits_{k\rightarrow\infty}E(X^k) = E^* $, $ \psi(0) = 0 $ and $
	\lim\limits_{k\rightarrow\infty}\|X^{k+1}-X^k\| = 0 $. We now show that for all $
	\forall K>N $, the following inequality holds
	\begin{align}
	\sum_{k = N+1}^{K}\|X^{k+1}-X^k\|< \varepsilon,
	\end{align}
	which implies $ \sum_{k=0}^\infty\|X^{k+1}-X^k\| < \infty$ by Cauchy principle of
	convergence. In fact, together with Lemma \ref{lem:grad2} and \eqref{ieq:KL2}, we obtain
	\begin{align}
	\Delta_{k,k+1}\geq \dfrac{\|X^{k+1}-X^k\|^2}{C_1\sum_{l= k-3T+1}^k\|X^l-X^{l-1}\|},\quad \forall k>N
	\end{align}
	where $ C_1 = C/\sigma. $ By using the geometric inequality, we get  
	\begin{align}\label{sum}
	2\|X^{k+1}- X^k\|\leq \dfrac{1}{3T}\sum_{l= k-3T+1}^k\|X^l-X^{l-1}\| + \bar{C}\Delta_{k,k+1},
	\end{align}
	Summing up \eqref{sum} for $ k=N+1,\cdots,K $, it follows that
	\begin{align*}
	2\sum_{k = N+1}^K\|X^{k+1}- X^k\|&\leq \sum_{k=N+1}^K\left(\dfrac{1}{3T}\sum_{l= k-3T+1}^k\|X^l-X^{l-1}\| + \bar{C}\Delta_{k,k+1}\right)\\
	& =\dfrac{1}{3T}\sum_{k=N+1}^K\sum_{l= -3T+1}^0\|X^{l+k}-X^{l+k-1}\| + \bar{C}\Delta_{N+1,K+1}\\
	& =\dfrac{1}{3T}\sum_{l= -3T+1}^0\sum_{k=N+l+1}^{K+l}\|X^k-X^{k-1}\| + \bar{C}\Delta_{N+1,K+1}\\
	&\leq \dfrac{1}{3T}\sum_{l= -3T+1}^0\sum_{k=N-3T+2}^K\|X^{k}-X^{k-1}\| + \bar{C}\Delta_{N+1,K+1}\\
	&= \sum_{k=N-3T+2}^K\|X^{k}-X^{k-1}\| + \bar{C}\Delta_{N+1,K+1}\\
	&\leq \sum_{k=N-3T+1}^K\|X^{k+1}-X^{k}\| + \bar{C}\Delta_{N+1,K+1},
	\end{align*}
	where the first equality is from the fact that $ \Delta_{p,q} + \Delta_{q, r} = \Delta_{p,r} $. Therefore, we have
	\begin{align}
	\sum_{k = N+1}^K\|X^{k+1}- X^k\|\leq  \sum_{k=N-3T+1}^{N}\|X^{k+1}-X^{k}\| + \bar{C}\psi(E(X^{N+1}) - E^*)<\varepsilon
	\end{align}
	As a result, we obtain $ \sum_{k=0}^{\infty}\|X^{k+1}-X^k\|<\infty $ and $\lim\limits_{k\rightarrow\infty}X^k = X^*$. From 
	Lemma \ref{lem:lim_E} and the continuity of $ E(X)$ on $ \calB(X^0) $, we get
	\begin{align}
	\lim\limits_{k\rightarrow\infty} E(X^k) = E(X^*) = E^*.
	\end{align}
\end{proof}

\end{document}